\numberwithin{equation}{section}
\theoremstyle{plain} 
\newtheorem{theorem}[equation]{Theorem}
\newtheorem{lemma}[equation]{Lemma}
\newtheorem{proposition}[equation]{Proposition}
\newtheorem{corollary}[equation]{Corollary}
\newtheorem*{RetractionProposition}{Proposition~\ref{proposition: deformation retraction new}}
\newcommand{\argforcustom}{}
\theoremstyle{newtextthm}
\newtheorem{helperforcustom}[equation]{\argforcustom}
\newtheorem*{helperforcustomstar}{\argforcustom}
\theoremstyle{definition}
\newtheorem{definition}[equation]{Definition}
\newtheorem{remark}[equation]{Remark}
\newtheorem*{remark*}{Remark}
\newtheorem{construction}[equation]{Construction}
\newcommand{\defining}[1]{{\emph{#1}}}
\newcommand{\definedas}{:=}
\newcommand{\integers}{{\mathbb{Z}}}
\def\doCal#1{%
\ifx#1\doAllCalEnd\def\doAllCal{\relax}\else%
 \expandafter\edef\csname#1cal\endcsname{{\noexpand\mathcal #1}}\fi}
\def\doAllCal#1{\doCal#1\doAllCal}
\def\doBar#1{%
\ifx#1\doAllBarEnd\def\doAllBar{\relax}\else%
 \expandafter\edef\csname#1bar\endcsname{{\noexpand\overline{#1}}}\fi}
\def\doAllBar#1{\doBar#1\doAllBar}
\def\doWiggle#1{%
\ifx#1\doAllWiggleEnd\def\doAllWiggle{\relax}\else%
 \expandafter\edef\csname#1wiggle\endcsname{{\noexpand\tilde{#1}}}\fi}
\def\doAllWiggle#1{\doWiggle#1\doAllWiggle}
\def\doBold#1{%
\ifx#1\doAllBoldEnd\def\doAllBold{\relax}\else%
 \expandafter\edef\csname#1bold\endcsname{{\noexpand\bf #1}}\fi}
\def\doAllBold#1{\doBold#1\doAllBold}
\def\doBbb#1{%
\ifx#1\doAllBbbEnd\def\doAllBbb{\relax}\else%
 \expandafter\edef\csname#1bb\endcsname{{\noexpand\mathbb{#1}}}\fi}
\def\doAllBbb#1{\doBbb#1\doAllBbb}
\newcommand{\largestrut}{\mbox{{\large\strut}}}
\newcommand{\switchmargin}{
\if@reversemargin
\normalmarginpar
\else
\reversemarginpar
\fi
}
\definecolor{llgray}{RGB}{230,230,230}
\definecolor{lblue}{RGB}{217,249,237}
\newcommand{\highlight}[1]{\ifmmode{\text{\sethlcolor{llgray}\hl{$#1$}}}\else{\sethlcolor{llgray}\hl{#1}}\fi}
\newcommand{\highlighteva}[1]{\ifmmode{\text{\sethlcolor{llgray}\hl{$#1$}}}\else{\sethlcolor{lblue}\hl{#1}}\fi}
\DeclareMathOperator{\Aut}{Aut}
\DeclareMathOperator{\BSU}{BSU}
\DeclareMathOperator{\BSO}{BSO}
\DeclareMathOperator{\Hom}{Hom}
\DeclareMathOperator{\id}{id}
\newcommand{\Id}{\mathrm{Id}}
\DeclareMathOperator{\im}{im}
\DeclareMathOperator{\Obj}{Obj}
\DeclareMathOperator{\Out}{Out}
\newcommand{\pdash}{$p$\kern1.3pt-}
\newcommand{\twodash}{$2$\kern1.3pt-}
\newcommand{\whatever}{\text{\,--\,}}
\newcommand{\Zpinfinity}{\integers/{p}^{\infty}}
\newcommand{\size}[1]{\operatorname{size}(#1)}
\newcommand{\subgroupeq}{\subseteq}
\newcommand{\supergroupeq}{\supseteq}
\newcommand{\Slominska}{S{\l}omi\'{n}ska }
\newcommand{\xlongrightarrow}[1]{\xrightarrow{\,#1\,}}
\newcommand\restr[2]{{
  \left.\kern-\nulldelimiterspace 
  #1 
  \vphantom{\big|} 
  \right|_{#2} 
  }}
\newcommand{\suchthat}[1]{\left|\, #1 \right. }
\newcommand{\realize}[1]{\big|#1\big|}
\newcommand{\pcomplete}[1]{{#1}_{p}^{\wedge}}
\renewcommand{\bar}{\widebar} 
\definecolor{llgray}{RGB}{230,230,230}
\renewcommand{\L}{\Lcal}
\newcommand{\ints}{\cap}
\newcommand{\too}[1]{\xrightarrow{#1}}
\newcommand{\widebar}[1]{{\overline{#1}}}
\newcommand{\F}{\mathcal{F}}
\newcommand{\longleftrightarrows}{\xymatrix@1@C=16pt{
\ar@<0.4ex>[r] & \ar@<0.4ex>[l]
}}
\renewcommand{\atop}[1]{{\let\scriptstyle\textstyle\let\scriptscriptstyle\scriptstyle\substack{#1}}}
\newcommand{\Pimage}[1]{P_{#1}}
\newcommand{\corestrict}[1]{{\underline{#1}}}
\newcommand{\Phat}{\widehat{P}}
\newcommand{\phibar}{\bar{\phi}}
\newcommand{\alphawiggle}{\widetilde{\alpha}}
\newcommand{\betawiggle}{\widetilde{\beta}}
\newcommand{\varphiwiggle}{\widetilde{\varphi}}
\newcommand{\psiwiggle}{\widetilde{\psi}}
\newcommand{\downto}{\downarrow}
\newcommand{\downtoNoniso}{\downarrow_{\scriptscriptstyle\not\cong}\!}
\newcommand{\Normalizer}[1]{P\downto N_{#1}P}
\newcommand{\NormalizerPrime}[1]{P\downtoNoniso N_{#1}P}
\newcommand{\undercategory}{undercategory}
\newcommand{\undercategories}{undercategories}
\newcommand{\overcategories}{overcategories}
\newcommand{\extension}{\widetilde{P}}
\newtheorem*{maintheorem}{Theorem~\ref{theorem: reduce linking to centric radical}}
\newcommand{\maintheoremtext}{Let $\Fcal$ be a saturated fusion system over a discrete \pdash toral group~$S$, and let $\Lcal$ be a
centric linking system associated to $\Fcal$. 
Let $\Hcal$ be a collection of $\Fcal$-centric subgroups of $S$ that is closed under $\Fcal$-conjugacy and contains all subgroups of $S$ that are both $\Fcal$-centric and $\Fcal$-radical. 
Let $\Hcal^\bullet=\{P^\bullet\suchthat{P\in\Hcal}\}$ and 
assume that $\Hcal^\bullet\subseteq\Hcal$. 
Let $\Lcal^{\Hcal}\subseteq \Lcal$ denote the
full subcategory of $\Lcal$ whose objects are in~$\Hcal$. 
Then the inclusion of $\Lcal^{\Hcal}$ in $\Lcal$ induces a homotopy equivalence
of nerves $\realize{\Lcal^{\Hcal}}\simeq \realize{\Lcal}$.}
\newcommand{\RetractionPropositionText}{
Let $(S, \Fcal, \Lcal)$ be a \pdash local compact group, and let $P$ be a fully $\Fcal$-normalized subgroup of~$S$.
There is a retraction 
$r\colon(P\downtoNoniso\Lcal)\longrightarrow(\NormalizerPrime{\Lcal})$
that induces a homotopy equivalences of nerves. 
}
\title{Subgroup collections controlling the homotopy type of a \pdash local compact group}
\author{Eva Belmont}
\address{Department of Mathematics, University of California San Diego, La Jolla, CA, USA}
\email{ebelmont@ucsd.edu}
\author{Nat\`alia Castellana}
\address{Departament de Matem\`atiques, Universitat Aut\`onoma de Barcelona, and Centre de Recerca Matemàtica, Barcelona, Spain}
\email{natalia@mat.uab.cat}
\author{Kathryn Lesh}
\address{Department of Mathematics, Union College, Schenectady NY, USA}
\email{leshk@union.edu}
\subjclass{MSC 2020: Primary 55R35; Secondary 57T10.}
\keywords{Keywords: homotopy theory, fusion system, classifying space, Lie group, p-local compact group}
\begin{document}
\maketitle
\markboth{\sc{Belmont, Castellana, and Lesh}}{\sc{Subgroups of \pdash local compact groups}}

\begin{abstract}
    Let $(S,\Fcal,\Lcal)$ be a \pdash local compact group. We prove that the (uncompleted) homotopy type of the nerve of the linking system $\Lcal$ is determined by the collection of subgroups of $S$ that are $\Fcal$-centric and $\Fcal$-radical. This result generalizes the result for the case of \pdash local finite groups, which is in the literature. 
\end{abstract}

\section{Introduction}
The structure of a ``\pdash local compact group" was introduced by Broto, Levi, and Oliver in \cite{BLO-Discrete}
and provides a common framework for the study of the mod~$p$ homotopy type of various types of classifying spaces. 
Examples include classical objects such as \pdash completed classifying spaces of finite groups and compact Lie groups. More broadly, one can use the framework to study classifying spaces of homotopy-theoretic generalizations of groups, 
such as 
\pdash compact groups \cite{dwyer-wilkerson-fixed-point}. For example,
the \pdash completed classifying space of a finite loop space is the classifying space of \pdash local compact group \cite{BLO-LoopSpaces}. Other examples constructed from exotic \pdash local finite groups are described in \cite{GLR}.


To describe a \pdash local compact group, one begins with a ``discrete \pdash toral" group~$S$ (Definition~\ref{def:dpt}). A ``fusion system" over~$S$
(Definition~\ref{definition: Fusion})
is a subcategory of the category of groups, with objects given by all subgroups of~$S$. 
``Saturated" fusion systems (Definition~\ref{definition: SaturatedFusion}) satisfy additional axioms requiring $S$ to behave like a Sylow \pdash subgroup of the hypothetical supergroup~$G$, and require the morphism sets of the fusion system to behave as though they were homomorphisms induced by conjugation in~$G$ (even 
though such a supergroup may not exist). 

Associated to a fusion system $\Fcal$ is a ``centric linking system"~$\Lcal$ (Definition~\ref{definition: linking}).
While morphism sets in $\Fcal$ mimic \emph{homomorphisms} induced by conjugation, an associated linking system $\Lcal$ for $\Fcal$ has morphism sets that mimic \emph{group elements}
of a hypothetical supergroup~$G$ that induce the homomorphisms in~$\Fcal$ via conjugation. 
The nerve $\realize{\Lcal}$ is analogous to the classifying space of a group, $BG$. 
And indeed, if a \pdash local compact group arises from a compact Lie group, then the \pdash completion of the nerve of $\Lcal$ is a model for the \pdash completion of~$BG$ \cite[Thm.~9.10]{BLO-Discrete}, and the same is true if $\Lcal$ arises from a \pdash compact group \cite[Thm.~10.7]{BLO-Discrete}.

Classifying spaces of compact Lie groups and finite groups admit mod~$p$ homology decompositions in terms of orbit categories with respect to certain families of subgroups.
Such decompositions are a key tool in the study of homotopy uniqueness and of maps between classifying spaces \cite{Dwyer-Homology, JMO,JM}. 
Similar decompositions exist for \pdash local compact groups \cite[Prop.~4.6]{BLO-Discrete}. 
The two taken together are key ingredients in the proofs that the \pdash local compact groups associated to Lie groups and finite groups model the \pdash completion of the groups' classifying spaces.

Centric and radical subgroups of a group $G$ play a key part in the homotopy type of~$BG$. For example, Dwyer~\cite{Dwyer-Homology} showed that the collection of \pdash radical, \pdash centric subgroups
of a finite group $G$ are enough to recover the \pdash completed homotopy type of $BG$ for any of the classical homology decompositions, and later the same was established for compact Lie groups (\cite{JMO}, \cite{Libman-Minami}).

In fusion systems over \emph{finite} \pdash groups, the result that centric and radical subgroups determine the homotopy type of the classifying space was proved in \cite[Thm.~3.5]{BCGLO}, using an induction via ``pruning subgroups." The corresponding result for \pdash local compact groups is not in the literature, and 
that is the gap that we fill with this paper. 

We make use of the ``bullet construction" of \cite{BLO-Discrete}:  $\Lcal^{\bullet}\subseteq \Lcal$ is a  full subcategory of the centric linking system  such that the inclusion $\Lcal^\bullet\subseteq\Lcal$ induces an homotopy equivalence on nerves
\cite[Prop.~4.5]{BLO-Discrete}. Attractively for computation, $\Obj(\Lcal^\bullet)$ contains finitely many $S$-conjugacy classes of subgroups (\cite[Lemma~3.2]{BLO-Discrete}) and contains all subgroups that are both centric and radical. 

\begin{theorem}   \label{theorem: reduce linking to centric radical}
\maintheoremtext
\end{theorem}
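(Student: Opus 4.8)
The strategy is to cut $\Lcal$ down to the subcategory $\Lcal^\bullet$, which has only finitely many conjugacy classes of objects and already contains the $\Fcal$-centric $\Fcal$-radical subgroups, and then to run a Quillen's Theorem~A argument in the spirit of the pruning argument of \cite[Thm.~3.5]{BCGLO}.

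\emph{Step 1: passage to $\Lcal^\bullet$.} The bullet functor $(-)^\bullet\colon\Lcal\to\Lcal^\bullet$ of \cite{BLO-Discrete} retracts $\Lcal$ onto the full subcategory $\Lcal^\bullet$, and the distinguished morphisms $P\to P^\bullet$ assemble into a natural transformation exhibiting $\realize{\Lcal^\bullet}$ as a deformation retract of $\realize{\Lcal}$. Because $\Hcal$ is closed under $\Fcal$-conjugacy and $\Hcal^\bullet\subseteq\Hcal$, both the functor and the natural transformation restrict to $\Lcal^\Hcal$, whose image under $(-)^\bullet$ is the full subcategory $\Lcal^{\Hcal^\bullet}$ on $\Hcal^\bullet$; hence $\realize{\Lcal^\Hcal}\simeq\realize{\Lcal^{\Hcal^\bullet}}$, while $\realize{\Lcal}\simeq\realize{\Lcal^\bullet}$ by \cite[Prop.~4.5]{BLO-Discrete}. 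So it suffices to prove $\realize{\Lcal^{\Hcal^\bullet}}\simeq\realize{\Lcal^\bullet}$. Observe that $\Hcal^\bullet$ is still closed under $\Fcal$-conjugacy (an $\Fcal$-conjugate of a bullet subgroup is a bullet subgroup), still contains every $\Fcal$-centric $\Fcal$-radical subgroup (such a subgroup equals its own bullet and lies in $\Hcal$), and that $\Obj(\Lcal^\bullet)$ has only finitely many $S$-conjugacy classes \cite[Lemma~3.2]{BLO-Discrete}; consequently $\Obj(\Lcal^\bullet)\setminus\Hcal^\bullet$ consists of $\Fcal$-centric but non-$\Fcal$-radical subgroups, in finitely many $\Fcal$-conjugacy classes.

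\emph{Step 2: a Theorem~A reduction.} Apply Quillen's Theorem~A to the inclusion $\iota\colon\Lcal^{\Hcal^\bullet}\hookrightarrow\Lcal^\bullet$ of full subcategories. For an object $Q$ of $\Lcal^\bullet$, the comma category $(Q\downarrow\Lcal^{\Hcal^\bullet})$ has objects the morphisms $Q\to c$ of $\Lcal$ with $c\in\Hcal^\bullet$. If $Q\in\Hcal^\bullet$ this category has the initial object $(Q,\id_Q)$; if $Q\notin\Hcal^\bullet$, so that $Q$ is $\Fcal$-centric but not $\Fcal$-radical, we must show its nerve is contractible (it is at least nonempty, since $S\in\Hcal^\bullet$ receives the distinguished morphism from $Q$). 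Granting this, Theorem~A yields $\realize{\Lcal^{\Hcal^\bullet}}\simeq\realize{\Lcal^\bullet}$, which together with Step~1 proves the theorem. (Alternatively, one can peel off the $\Fcal$-conjugacy classes in $\Obj(\Lcal^\bullet)\setminus\Hcal^\bullet$ one at a time in order of decreasing size, as in \cite{BCGLO}; since morphisms of $\Lcal$ are monomorphisms, a size comparison identifies the relevant comma category at each stage with the same $(Q\downarrow\Lcal^{\Hcal^\bullet})$, and the finiteness from Step~1 makes the process terminate.)

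\emph{Step 3: contractibility, and the main obstacle.} It remains to prove that for a fully $\Fcal$-normalized $\Fcal$-centric non-$\Fcal$-radical (bullet) subgroup $Q\notin\Hcal^\bullet$ the nerve of $(Q\downarrow\Lcal^{\Hcal^\bullet})$ is contractible. I would do this using Proposition~\ref{proposition: deformation retraction new}: that result (or the version of it obtained by restricting targets to $\Hcal^\bullet$) replaces $(Q\downtoNoniso\Lcal)$ by the corresponding category over the normalizer linking system $N_\Lcal Q$, turning the problem into a statement about $\Out_\Fcal(Q)$ acting on a poset of proper-overgroup data; since $Q$ is not $\Fcal$-radical, $O_p(\Out_\Fcal(Q))\neq 1$, and a nontrivial normal $p$-subgroup then contracts the resulting nerve by the standard mechanism (it dominates the homotopy colimit), exactly as in the finite case \cite{BCGLO,Dwyer-Homology}. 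The crux of the whole argument is precisely this step: making the ``$O_p(\Out_\Fcal(Q))\neq 1\Rightarrow$ contractible'' mechanism work when $\Out_\Fcal(Q)$ and the normalizers involved are discrete $p$-toral rather than finite, and checking that cutting everything down to the collection $\Hcal^\bullet$ rather than to all $\Fcal$-centric subgroups does not break the contraction — this is where the hypotheses that $\Hcal$ contains all $\Fcal$-centric $\Fcal$-radical subgroups and that $\Hcal^\bullet\subseteq\Hcal$ do their work. The remaining ingredients — that the bullet construction restricts to $\Lcal^\Hcal$, that $\Hcal^\bullet$ inherits closure under $\Fcal$-conjugacy, and that the size comparison collapses the comma categories to a single $(Q\downarrow\Lcal^{\Hcal^\bullet})$ — are routine bookkeeping.
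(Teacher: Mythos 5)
Your Step 1 matches the paper's Lemma~\ref{lemma: reduce}, and the overall skeleton (reduce to $\Lcal^\bullet$, then apply Quillen's Theorem~A) is the right one. But Step~2 contains a genuine error that the paper's inductive pruning is specifically designed to avoid. Applying Theorem~A directly to $\Lcal^{\Hcal^\bullet}\hookrightarrow\Lcal^\bullet$ requires contractibility of $(Q\downarrow\Lcal^{\Hcal^\bullet})$, whose objects are morphisms from $Q$ landing \emph{only} in subgroups belonging to $\Hcal^\bullet$. Your parenthetical claim that ``a size comparison identifies the relevant comma category at each stage with the same $(Q\downarrow\Lcal^{\Hcal^\bullet})$'' is false: in the paper's filtration $\Lcal_0\subsetneq\cdots\subsetneq\Lcal_n$, the size comparison identifies $(P_{i+1}\downarrow\Lcal_i)$ with $(P_{i+1}\downtoNoniso\Lcal^\bullet)$, whose targets range over \emph{all} of $\Lcal^\bullet$ --- including non-radical subgroups strictly larger than $P_{i+1}$ that are not in $\Hcal$. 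These two comma categories genuinely differ, and it is only the larger one that the machinery of Sections~\ref{section: normalizer fusion subsystems} and~\ref{section: Phat} contracts: the chain $(P\downtoNoniso\Lcal)\simeq(\NormalizerPrime{\Lcal})\simeq(\extension\downto N_\Lcal P)$ uses morphisms into arbitrary $\Fcal$-centric targets (e.g.\ $\extension$ and $\extension\cdot Q$ need not lie in $\Hcal$). Pruning one conjugacy class at a time in order of \emph{non-increasing} size is what guarantees that every proper overgroup target is already present in $\Lcal_i$, so the undercategory to be contracted is the unrestricted one. Without that ordering your direct Theorem~A application has no proof of the hypothesis it needs.

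Separately, Step~3 treats the actual contractibility argument as a known ``standard mechanism'' ($O_p(\Out_\Fcal(Q))\neq 1$ dominates a homotopy colimit). That is not how the paper proceeds, and in the discrete $p$-toral setting it is not an off-the-shelf fact: the paper constructs $\extension=\bigcap_{\alpha\in\Aut_\Fcal(P)}N_\alpha$, shows $P\subsetneq\extension$ exactly when $P$ is not $\Fcal$-radical (Corollary~\ref{corollary: radically proper}, via Proposition~\ref{proposition: Pwiggle contains O_p}), proves a unique-extension lemma over $\extension$ (Lemma~\ref{lemma: extension to Phat}), and deduces $\realize{\NormalizerPrime{\Lcal}}\simeq\realize{\extension\downto N_\Lcal P}$, the latter having an initial object. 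This is the substance of the theorem, not routine bookkeeping, and your proposal leaves it unproved.
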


Our proof follows the same general strategy as \cite{BCGLO} but we clarify and streamline their argument, and handle some extra obstacles that occur because the linking system is not finite. Theorem~\ref{theorem: reduce linking to centric radical} is also closely related to results in Appendix~A of \cite{BLO-LoopSpaces}, in which 
the authors study the mod $p$ homotopy type of 
transporter systems, another type of category 
used to describe the classifying space of a \pdash local compact group. Corollary~A.10 of \cite{BLO-LoopSpaces} is similar to Theorem~\ref{theorem: reduce linking to centric radical}, but assumes that the collection of subgroups being considered is closed under supergroups, which 
is not the case when dealing with $\Fcal$-centric $\F$-radical subgroups. 

Another advantage of our approach is that Theorem~\ref{theorem: reduce linking to centric radical} gives 
a genuine homotopy equivalence of nerves, whereas the techniques of \cite[Cor.~A.10]{BLO-LoopSpaces} necessarily can only give equivalences after \pdash completion because there is a mod~$p$ homology argument involved.
It is true that results such as \cite[Prop.~1.1]{BLO-FiniteGroups} for the \pdash local finite group $(S, \Fcal_S(G), \Lcal_S(G))$ associated to a finite group~$G$, along with the similar result \cite[Thm~9.10]{BLO-Discrete} for compact Lie groups, only tell us that $|\Lcal_S(G)|$ agrees with $BG$ after \pdash completing both sides.
However, more precise versions of this statement have been obtained in some cases without \pdash completion. 
For example, Libman and Viruel~\cite{Libman-Viruel} give conditions on a \pdash local finite group $(S,\Fcal,\Lcal)$ such that 
$\realize{\Lcal} \simeq B\Gamma$ for some discrete group~$\Gamma$. See also \cite{COS} for a different example identifying the uncompleted nerve, this time in the simply-connected case. 

Work of \Slominska \cite{Slominska-hocolim} allows one to describe the homotopy type of $\realize{\Lcal}$ as a homotopy colimit indexed on a poset. This approach was taken by Libman \cite{Libman-normalizer} to describe a ``normalizer decomposition'' for \pdash local finite groups, 
which in particular gives a decomposition of the uncompleted nerve of the linking system. In forthcoming work \cite{WIT-main}, the current authors, together with Grbi\'{c} and Strumila, prove an analogous theorem for \pdash local compact groups. 
In this context, Theorem~\ref{theorem: reduce linking to centric radical} 
reduces the size of the indexing category for the decomposition and allows for explicit computations, in some cases giving homotopy pushout descriptions for~$\realize{\Lcal}$. For example, the general normalizer decomposition recovers the homotopy pushout descriptions for $\BSU(2)$ and $\BSO(3)$ originally due to Dwyer, Miller, and Wilkerson \cite{DMW1}.



\subsection*{Organization.}
Section~\ref{section: background} gathers background material on \pdash local compact groups. Section~\ref{section: normalizer fusion subsystems} discusses normalizer fusion and linking systems, adapting results from \cite{BCGLO}. In Section~\ref{section: Phat}, for an arbitrary fully normalized $P$ in~$\Fcal$, we construct~$\extension$, the largest supergroup of $P$ over which all $\Fcal$-automorphisms of $P$ extend, and we show that $\extension$ coincides with the group $\Phat$ used in \cite{BCGLO}. In Section~\ref{section: proof of theorem} we prove the main theorem.

\subsection*{Acknowledgements.}
The first author was supported by NSF grant  DMS-2204357 and by an AWM-NSF mentoring travel grant.
The second author was partially supported by Spanish State Research Agency project PID2020-116481GB-I00, 
the Severo Ochoa and María de Maeztu Program for Centers and Units of Excellence in R$\&$D (CEX2020-001084-M), and the CERCA Programme/Generalitat de Catalunya. All three authors acknowledge the hospitality of the program ``Higher algebraic structures in algebra, topology and geometry" at the Mittag-Leffler Institute in Spring~2022. 

\bigskip

\section{Background}
\label{section: background}

In this section, we gather definitions and preparatory results. We review the definition of a \pdash local compact group, and we also review some lemmas on linking systems. 

\begin{definition}\label{def:dpt}
A \defining{discrete \pdash toral group} is a group $P$ given by an extension
\[
1\longrightarrow \left(\Zpinfinity\right)^k \longrightarrow P\longrightarrow \pi_{0}P\longrightarrow 1,
\]
where $k$ is a nonnegative integer which we call the \defining{rank}, and $\pi_{0}P$ is a finite \pdash group. 
\begin{itemize}
    \item 
The group $\left(\Zpinfinity\right)^k$ is the \defining{identity component of~$P$}, denoted~$P_0$. 
\item 
The \defining{size} of a discrete \pdash toral group $P$ is an ordered pair $\size{P}\definedas(k, c)$, where $k$ is the rank of $P$ and $c=|\pi_0 P|$. The pairs
$(k,c)$ are given the lexicographic order (see~\cite[A.5]{CLN}).
\end{itemize}
\end{definition}

Subgroup inclusions respect size as in the finite group case: 
if $P_1\subgroupeq P_2$ are discrete \pdash toral groups, then $\size{P_1}\leq \size{P_2}$, with equality if only if $P_1 = P_2$
(see~\cite[Sec.~1]{BLO-Discrete}).

\begin{definition}\cite[Defn.~2.1]{BLO-Discrete}
\label{definition: Fusion}
A \defining{fusion system} $\F$ over a discrete \pdash toral group $S$ is a
subcategory of the category of groups, defined as follows. The objects of
$\Fcal$ are all of the subgroups of~$S$. The morphism sets $\Hom_{\F}(P,Q)$
contain only group monomorphisms, and satisfy the following conditions.
\begin{enumerate}[label=(\alph*)]
   \item $ \Hom_S(P,Q) \subseteq \Hom_{\F}(P,Q)$ for all $ P,Q\subgroupeq S $. In
particular, all subgroup inclusions and conjugations by elements of $S$ are in~$\Fcal$.
   \item Every morphism in $\Fcal$ factors as the composite of an isomorphism in $\Fcal$ followed by a subgroup inclusion.
\end{enumerate}
\end{definition}

We think of the homomorphisms of a fusion system $\Fcal$ as mimicking the idea of conjugation in a supergroup $G$ of~$S$. Accordingly, two groups $P$ and $P'$ that are objects of $\Fcal$ are called \defining{$\Fcal$-conjugate} if they are isomorphic as objects of~$\Fcal$. 

\begin{definition}   
\label{definition: fully}
Let $\Fcal$ be a fusion system over the discrete \pdash toral subgroup~$S$. 
\begin{enumerate}
    \item We say that $P\subgroupeq S$ is \defining{fully centralized} if for all $Q\subgroupeq S$ that are $\Fcal$-conjugate to~$P$, we have 
    $\size{C_S P} \geq \size{C_S Q}$.
    \item We say that $P\subgroupeq S$ is \defining{fully normalized} if for all $Q\subgroupeq S$ that are $\Fcal$-conjugate to~$P$, we have $\size{N_S P}\geq \size{N_S Q}$.
\end{enumerate}
\end{definition}

The following definition, of ``saturation," is intended to axiomatize the consequences of the group $S$ being a Sylow \pdash subgroup of~$G$, together with morphisms coming from conjugation by elements of~$G$. This is a technical condition that is assumed in order to guarantee good group-like properties, as in Definition~\ref{definition: classifying space of linking}.

\begin{definition}\cite[Defn.~2.2]{BLO-Discrete}
\label{definition: SaturatedFusion}
A fusion system $\Fcal$ is \defining{saturated} if the following three conditions hold:
\begin{enumerate}[label=(\Roman*)]
\item If $P\subgroupeq S$ is fully normalized in $\Fcal$, 
then $P$ is fully centralized in $\Fcal$, 
the group $\Out_\Fcal(P)\definedas\Aut_\Fcal(P)/\Aut_P(P)$ is finite, 
and the group $\Out_S(P)\definedas\Aut_S(P)/\Aut_P(P)$ is a Sylow \pdash subgroup of $\Out_\Fcal(P)$.
\item If $P\subgroupeq S$ and $\phi\in \Hom_\Fcal(P,S)$ 
are such that $\phi(P)$ is fully centralized, and if we set
\[
N_\phi \definedas 
\left\{ g\in N_S \suchthat{\phi c_g \phi^{-1}\in \Aut_S(\phi(P))} \right\}, 
\]
then there exists $\phibar\in \Hom_\Fcal(N_\phi,S)$ such that
$\restr{\phibar}{P}=\phi$.
\item If $P_1\subgroupeq P_2\subgroupeq P_3\subgroupeq \dots$ 
is an increasing sequence of subgroups of~$S$ with the property that
$P_\infty = \bigcup_{n=1}^\infty P_n$, and if
$\phi\in \Hom(P_\infty,S)$ is any homomorphism such that $\restr{\phi}{P_n}\in\Hom_\Fcal(P_n,S)$ for all~$n$, 
then $\phi\in \Hom_\Fcal(P_\infty,S)$.
\end{enumerate}
\end{definition}

The goal of this paper is to show one can safely restrict to a sub-collection of subgroups of~$S$; the objects we consider are exactly those that satisfy both of the following two conditions.
\begin{definition}\label{definition:Fcentric-Fradical}
Let $\Fcal$ be a fusion system over a discrete \pdash toral group~$S$.
\begin{enumerate}
\item \label{item: discrete Fcentric}
     A~subgroup $P\subgroupeq S$ is called \defining{$\Fcal$-centric}
     if $P$ contains all elements of $S$ that centralize it,
     and likewise all $\Fcal$-conjugates of $P$ contain their
     $S$-centralizers.
\item \label{item: discrete Fradical}
     A subgroup $P\subgroupeq S$ is called \defining{$\Fcal$-radical}
     if $\Out_{\Fcal}(P)$ 
     contains no nontrivial normal \pdash subgroup.
\end{enumerate}
\end{definition}

A linking system, whose definition we recall next, has more morphisms than the fusion system. The motivating example satisfies $\pcomplete{\realize{\Lcal}} \simeq \pcomplete{\realize{BG}}$ for a compact Lie group $G$, though linking systems are more general than this.

\begin{definition}\cite[Defn.~4.1]{BLO-Discrete}
\label{definition: linking}
Let $\Fcal $ be a fusion system over a discrete \pdash toral group~$S$.
A \defining{centric linking system associated to~$\Fcal$} is a category $\L$ whose
objects are the $\Fcal$-centric subgroups of $S$, together with a functor
$
\L \xlongrightarrow{\pi} \Fcal
$
and ``distinguished'' monomorphisms $P\xrightarrow{\delta_P} \Aut_\Lcal(P)$ for each
$\Fcal$-centric subgroup $P\subgroupeq S$ satisfying the following
conditions.
\begin{enumerate}[label=(\Alph*)]
\item $\pi$ is the identity on objects and surjective on morphisms. More
precisely, for each pair of objects $P,Q\in \Lcal$, the center $Z(P)$ acts freely on
$\Hom_\Lcal(P,Q)$ by composition (upon identifying $Z(P)$ with
$\delta_P(Z(P))\subgroupeq \Aut_\Lcal(P)$), and $\pi$ induces a bijection
\[
\Hom_{\L}(P,Q)/Z(P)\xrightarrow{\ \cong\ } \Hom _{\Fcal}(P,Q).
\]
\item For each $\Fcal$-centric subgroup $P\subgroupeq S$ and each $g\in P$, the functor
$\pi$ sends $\delta_P(g)\in\Aut_\Lcal(P)$ to $c_g\in \Aut_\Fcal(P)$.
\item For each $ f \in \Hom _{\Lcal}(P,Q) $ and each $g \in P$, the following
square commutes in $\Lcal$:
\[
\diagram
P \rto^{f} \dto_{\delta _P(g)}
      & Q \dto^{\delta_Q(\pi(f)(g))}
\cr P \rto^f & Q.
\enddiagram
\]
\end{enumerate}
\end{definition}
 
With these definitions in place, we arrive at the object of study. 

\begin{definition}     \label{definition: classifying space of linking}
A \defining{\pdash local compact group} is a triple $(S,\Fcal,\Lcal)$, where $\Fcal$ is
a saturated fusion system over the discrete \pdash toral group~$S$,
and $\Lcal$ is a centric linking system associated to~$\Fcal$. The
\defining{classifying space of $(S,\Fcal,\L)$} is defined as $B\Fcal\definedas \pcomplete{\realize{\Lcal}}$.
\end{definition}

A priori, computing the classifying space of a \pdash local compact group requires handling an infinite number of isomorphism classes of objects of~$\Lcal$. However, 
Broto, Levi, and Oliver constructed a functorial retraction 
$(\whatever)^\bullet\colon\Fcal\rightarrow\Fcal$ that lifts to the associated linking system, and whose image contains a finite number of conjugacy classes.

\begin{proposition}
\cite[Defn.~3.1, Lemma~3.2, Prop.~3.3, Prop.~4.5]{BLO-Discrete}
\label{proposition: bullet}
Let $\Fcal$ be a saturated fusion system over a discrete \pdash toral group~$S$.
There is an idempotent endofunctor $(\whatever)^\bullet\colon \Fcal \longrightarrow\Fcal$, the \defining{bullet functor}, 
such that the full subcategory $\Fcal^{\bullet}\subseteq\Fcal$ with 
$\Obj(\Fcal^{\bullet})\definedas \left\{P^\bullet \suchthat{P\subgroupeq  S}\right\}$
is closed under $\Fcal$-conjugacy, contains finitely many
$S$-conjugacy classes, and contains all subgroups $P\subseteq S$ that are both
$\Fcal$-centric and $\Fcal$-radical. If $\Lcal$ is a linking system associated to~$\Fcal$, then $(\whatever)^\bullet$ lifts to an idempotent endofunctor of~$\Lcal$, 
and the inclusion $\Lcal^\bullet\subseteq\Lcal$ induces a homotopy equivalence of nerves. 
\end{proposition}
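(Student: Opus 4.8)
The plan is to reconstruct the bullet functor following \cite[Sections~3--4]{BLO-Discrete}. First I would fix a maximal discrete \pdash torus $S_0\trianglelefteq S$ of rank~$r$. Being normal in $S$, the subgroup $S_0$ is fully normalized, so saturation axiom~(I) makes $\mathcal{W}\definedas\Aut_\Fcal(S_0)$ a \emph{finite} subgroup of $\Aut(S_0)\cong\GL_r(\Zphat)$ (note $\Aut_{S_0}(S_0)=1$ since $S_0$ is abelian) which contains $\Aut_S(S_0)$. For $T\subgroupeq S_0$ I would define $T^\bullet\subgroupeq S_0$ by a recipe depending only on the pointwise stabilizer of $T$ in $\mathcal{W}$, and check that $T\subgroupeq T^\bullet$, that the assignment is idempotent and $\mathcal{W}$-equivariant (hence $\Fcal$-equivariant on subgroups of $S_0$), and that—since $\mathcal{W}$ has only finitely many subgroups—$T^\bullet$ takes only finitely many values. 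I would then extend the construction to an arbitrary $P\subgroupeq S$ so that $(P^\bullet)_0=(P_0)^\bullet$, $P\subgroupeq P^\bullet$, and $P^\bullet$ is pinned down by the $\Fcal$-action on $P$; $\Fcal$-conjugacy equivariance is built into the construction, so $\Fcal^\bullet$ is closed under $\Fcal$-conjugacy, it has finitely many $S$-conjugacy classes (the components $(P_0)^\bullet$ take finitely many values and $\pi_0S$ is finite), and one verifies that $\Fcal$-centricity is preserved by $(\whatever)^\bullet$.

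Next I would promote $P\mapsto P^\bullet$ to an endofunctor of $\Fcal$ by showing that every $\phi\in\Hom_\Fcal(P,Q)$ extends \emph{uniquely} to some $\phi^\bullet\in\Hom_\Fcal(P^\bullet,Q^\bullet)$; functoriality and idempotency of $(\whatever)^\bullet\colon\Fcal\to\Fcal$ then follow from the uniqueness, and the subgroup inclusions $P\subgroupeq P^\bullet$ assemble into a natural transformation $\id_\Fcal\Rightarrow(\whatever)^\bullet$. To see that $\Fcal^\bullet$ contains every $\Fcal$-centric $\Fcal$-radical subgroup $P$, I would prove $P=P^\bullet$ in that case: $P^\bullet$ normalizes $P$, and the image of $P^\bullet/P$ in $\Out_\Fcal(P)$ is a normal \pdash subgroup, hence trivial by $\Fcal$-radicality, so $P^\bullet\subgroupeq P\cdot C_S(P)=P$ using $\Fcal$-centricity.

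For the lift to $\Lcal$, I would use that in a centric linking system every inclusion of $\Fcal$-centric subgroups has a canonical lift; write $\iota_P\in\Hom_\Lcal(P,P^\bullet)$ for the lift of $P\subgroupeq P^\bullet$. Given $f\in\Hom_\Lcal(P,Q)$, I would define $f^\bullet\in\Hom_\Lcal(P^\bullet,Q^\bullet)$ to be the unique morphism with $\pi(f^\bullet)=\pi(f)^\bullet$ and $f^\bullet\circ\iota_P=\iota_Q\circ f$; this is well defined because $Z(P^\bullet)$ acts freely and transitively on the fiber of $\pi$ over $\pi(f)^\bullet$ and the displayed equation pins down that element. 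One then checks that $(\whatever)^\bullet\colon\Lcal\to\Lcal$ is an idempotent functor lifting the one on $\Fcal$, with image the full subcategory $\Lcal^\bullet$ on the objects $P^\bullet$, and that the $\iota_P$ form a natural transformation $\eta\colon\id_\Lcal\Rightarrow(\whatever)^\bullet$. Factoring $(\whatever)^\bullet$ as $\Lcal\xrightarrow{\,r\,}\Lcal^\bullet\xrightarrow{\,j\,}\Lcal$, idempotency gives $r\circ j=\id_{\Lcal^\bullet}$, while $\eta$—being a functor out of $\Lcal\times[1]$—realizes to a homotopy $\id_{\realize{\Lcal}}\simeq\realize{j}\circ\realize{r}$; together with $\realize{r}\circ\realize{j}=\id_{\realize{\Lcal^\bullet}}$ this shows $\realize{j}\colon\realize{\Lcal^\bullet}\to\realize{\Lcal}$ is a homotopy equivalence.

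I expect the main obstacle to be the construction of $(\whatever)^\bullet$ on $\Fcal$ itself: proving that the torus-level assignment is idempotent and $\mathcal{W}$-equivariant, that it extends consistently over $\pi_0S$, and above all that every $\Fcal$-morphism extends uniquely over the bullets—this is exactly where the finiteness of $\mathcal{W}$ and a close analysis of subgroups of $(\Zpinfinity)^r$ are used. The $\Out_\Fcal(P)$-argument identifying $\Fcal$-centric $\Fcal$-radical subgroups with their own bullets is the other place where genuine finite group theory is needed.
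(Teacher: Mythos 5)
The paper does not prove this proposition; it is cited directly from Broto--Levi--Oliver \cite[Defn.~3.1, Lemma~3.2, Prop.~3.3, Prop.~4.5]{BLO-Discrete}, and your proposal is a faithful reconstruction of that same construction (finiteness of $\Aut_\Fcal(S_0)$, the torus-level recipe and its extension over $\pi_0 S$, unique extension of $\Fcal$-morphisms over the bullets, the lift to $\Lcal$ via centricity, and the natural transformation $\iota_P\colon P\to P^\bullet$ giving the nerve equivalence). Your argument that $\Fcal$-centric $\Fcal$-radical subgroups satisfy $P=P^\bullet$ is also sound once one notes that $\Fcal$-functoriality of $(\whatever)^\bullet$ forces every $\alpha\in\Aut_\Fcal(P)$ to extend over $P^\bullet$, so $P^\bullet$ lands inside the group $\extension$ of Section~\ref{section: Phat}, whose image in $\Out_\Fcal(P)$ is $O_p(\Out_\Fcal(P))$ by Proposition~\ref{proposition: Pwiggle contains O_p}.
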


Proposition~\ref{proposition: bullet} says that, for computational purposes, we can restrict to the category $\Lcal^\bullet$, which has a finite number of isomorphism classes of objects. 
Our goal in this paper is to show that one can restrict to a yet smaller collection of objects, namely those that are both $\Fcal$-centric and $\Fcal$-radical, without changing the nerve of the associated linking system.

\smallskip

The remainder of this section gathers lemmas related to lifting morphisms from a fusion system to the associated linking system.
In a fusion system, all of the morphisms between subgroups are actual group homomorphisms, but morphisms in a linking system cannot be viewed in this way.
Given a morphism $\varphi$ in~$\Lcal$, there is an \textit{associated} homomorphism of groups, namely the homomorphism $\pi(\varphi)$ in $\Fcal$. But the morphisms in $\Fcal$ are analogous to group homomorphisms induced by conjugation in a supergroup $G\supergroupeq S$, while the morphisms in $\Lcal$ itself are analogous to the group elements that induce the homomorphism. 

Nevertheless, the last three lemmas of this section establish that several common features of group homomorphisms also exist for morphisms in~$\Lcal$.
First, we can uniquely complete liftings from $\Fcal$ to~$\Lcal$.

\begin{lemma}\cite[Lemma~4.3]{BLO-Discrete}
\label{lemma: factoring lemma}
Given morphisms $\varphi\in \Hom_\Fcal(P,Q)$ and $\psi\in\Hom_\Fcal(Q,R)$, and lifts $\psiwiggle$ and $\widetilde{\psi\varphi}$ of $\psi$ and $\psi\circ\varphi$, respectively, to~$\Lcal$, there is a unique compatible lift of~$\varphi$ to $\Lcal$ making the diagram on the right a commuting lift to $\Lcal$ of the diagram in $\Fcal$ on the left.  
\[
\xymatrix{
P \ar[r]^{\varphi} \ar[d]_-{\id}
   &Q\ar[d]^-{\psi}\\
P \ar[r]_-{\psi\circ\varphi} &R 
}
\hspace{60pt}
\xymatrix{
P \ar@{-->}[r]^{\exists ! \varphiwiggle} \ar[d]_-{\id}
   &Q\ar[d]^-{\psiwiggle}\\
P \ar[r]_-{\widetilde{\psi\varphi}} &R 
}
\]
\end{lemma}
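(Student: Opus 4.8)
The plan is to reduce everything to axiom~(A) of Definition~\ref{definition: linking}. That axiom says that $\pi$ is surjective on morphisms and that, for any objects $X,Y$ of~$\Lcal$, the group $Z(X)$ acts freely on $\Hom_\Lcal(X,Y)$ by precomposition with $\delta_X(Z(X))\subseteq\Aut_\Lcal(X)$, with $\pi$ inducing a bijection from the orbit set onto $\Hom_\Fcal(X,Y)$. Equivalently, the fiber of $\pi$ over any morphism of~$\Fcal$ is a single free $Z(X)$-orbit. Everything below is a formal consequence of this; I do not expect a genuine obstacle, the one point needing attention being that one may only multiply a lift of~$\varphi$ by $\delta_P(z)$ for a \emph{central} element $z\in Z(P)$ if the result is to remain a lift of~$\varphi$.

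\emph{Existence.} Using surjectivity of $\pi$ on morphisms, I would first pick any lift $\varphiwiggle_0\in\Hom_\Lcal(P,Q)$ of~$\varphi$. Since $\pi$ is a functor, $\psiwiggle\circ\varphiwiggle_0$ is then a lift of $\psi\circ\varphi$, so $\psiwiggle\circ\varphiwiggle_0$ and the given lift $\widetilde{\psi\varphi}$ both lie in the $\pi$-fiber over $\psi\circ\varphi$. By axiom~(A) there is a unique $z\in Z(P)$ with $\widetilde{\psi\varphi}=(\psiwiggle\circ\varphiwiggle_0)\circ\delta_P(z)$. I would then set $\varphiwiggle\definedas\varphiwiggle_0\circ\delta_P(z)$, so that $\psiwiggle\circ\varphiwiggle=\widetilde{\psi\varphi}$ by construction. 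It remains to check that $\varphiwiggle$ is a lift of~$\varphi$: by functoriality and axiom~(B), $\pi(\varphiwiggle)=\pi(\varphiwiggle_0)\circ\pi(\delta_P(z))=\varphi\circ c_z$, and $c_z=\id_P$ because $z$ is central in~$P$, whence $\pi(\varphiwiggle)=\varphi$. So $\varphiwiggle$ is a lift of~$\varphi$ making the right-hand square commute.

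\emph{Uniqueness.} Suppose $\varphiwiggle'$ is another lift of~$\varphi$ to~$\Lcal$ with $\psiwiggle\circ\varphiwiggle'=\widetilde{\psi\varphi}$. Both $\varphiwiggle$ and $\varphiwiggle'$ lie in the $\pi$-fiber over~$\varphi$, so by axiom~(A) there is $w\in Z(P)$ with $\varphiwiggle'=\varphiwiggle\circ\delta_P(w)$. Composing on the left with~$\psiwiggle$ gives $\widetilde{\psi\varphi}=\psiwiggle\circ\varphiwiggle\circ\delta_P(w)=\widetilde{\psi\varphi}\circ\delta_P(w)$, and freeness of the $Z(P)$-action on $\Hom_\Lcal(P,R)$ forces $w=1$, hence $\varphiwiggle'=\varphiwiggle$.

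As indicated, there is no real obstacle here: the statement is a clean corollary of the linking-system axioms and is unaffected by the non-finiteness of~$S$, since no limiting or counting argument is involved. The only delicate point—really a bookkeeping matter—is the verification in the existence step that correcting $\varphiwiggle_0$ by the central element~$z$ leaves its image under~$\pi$ unchanged, which is exactly why axiom~(A)'s use of the \emph{center} of~$P$, rather than all of~$P$, is what the argument needs.
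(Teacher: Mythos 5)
Your argument is correct and is exactly the standard one: the paper gives no proof of its own here but cites \cite[Lemma~4.3]{BLO-Discrete}, whose proof is the same adjustment-by-a-central-element argument via axiom~(A) (together with axiom~(B) to see that $\delta_P(z)$ lies over $\id_P$). Nothing further is needed.
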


Next we need an analogue in $\Lcal$ of inclusions. Given $P\subgroupeq Q$, there is a preferred morphism $i:P\to Q$ in $\Fcal$, namely the subset inclusion. In $\Lcal$ there is no natural notion of subgroup inclusion, but the next lemma says that we can make a coherent choice of lifts of the subgroup inclusions in $\Fcal$ to morphisms in $\Lcal$.

\begin{lemma}
\label{lemma: distinguished inclusions}
\cite[Prop.~1.5, Rem.~1.6]{JLL}
The poset of inclusions of subgroups in $\Fcal$ lifts to a compatible sub-poset $\big\{\iota_P^Q:P\to Q\big\}$ of~$\Lcal$. In particular, $\iota_P^P$ is the identity morphism of~$P$, and given inclusions $P\subgroupeq Q \subgroupeq R$ we have $\iota_Q^R\circ \iota_P^Q = \iota_P^R$.
\end{lemma}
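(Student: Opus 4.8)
The plan is to construct the entire family $\{\iota_P^Q\}$ in one stroke by factoring every subgroup inclusion through the ambient group~$S$, and then to read off the normalization $\iota_P^P=\id_P$ and the transitivity relation from the uniqueness clause of Lemma~\ref{lemma: factoring lemma}. The structural observation that makes this work is that $S$ is itself $\Fcal$-centric — indeed $C_S(S)=Z(S)\subgroupeq S$, and $S$ admits no proper $\Fcal$-conjugate by the size inequality for subgroup inclusions recalled after Definition~\ref{def:dpt} — so $S$ is an object of~$\Lcal$, and it is the (unique) maximal object among the $\Fcal$-centric subgroups.

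First I would fix, for each $\Fcal$-centric subgroup $P$, a lift $\iota_P^S\in\Hom_\Lcal(P,S)$ of the subgroup inclusion $P\hookrightarrow S$ in~$\Fcal$; such a lift exists by condition~(A) of Definition~\ref{definition: linking} (surjectivity of $\pi$ on morphisms), and for $P=S$ we take $\iota_S^S=\id_S$. Then, for an arbitrary pair of $\Fcal$-centric subgroups $P\subgroupeq Q$, I would apply Lemma~\ref{lemma: factoring lemma} with $\varphi$ the inclusion $P\hookrightarrow Q$, with $\psi$ the inclusion $Q\hookrightarrow S$, and with the lifts $\psiwiggle=\iota_Q^S$ and $\widetilde{\psi\varphi}=\iota_P^S$: this produces a \emph{unique} lift of $P\hookrightarrow Q$ to~$\Lcal$, which I define to be $\iota_P^Q$, characterized by the relation $\iota_Q^S\circ\iota_P^Q=\iota_P^S$.

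The two asserted properties are then immediate consequences of this uniqueness. For $\iota_P^P=\id_P$: the identity morphism of $P$ in~$\Lcal$ is a lift of the identity inclusion $P\hookrightarrow P$ and trivially satisfies $\iota_P^S\circ\id_P=\iota_P^S$, so it coincides with $\iota_P^P$ by the uniqueness in Lemma~\ref{lemma: factoring lemma}. For transitivity when $P\subgroupeq Q\subgroupeq R$: applying $\pi$ shows that both $\iota_Q^R\circ\iota_P^Q$ and $\iota_P^R$ are lifts of the inclusion $P\hookrightarrow R$, while $\iota_R^S\circ(\iota_Q^R\circ\iota_P^Q)=(\iota_R^S\circ\iota_Q^R)\circ\iota_P^Q=\iota_Q^S\circ\iota_P^Q=\iota_P^S=\iota_R^S\circ\iota_P^R$; since the lift of $P\hookrightarrow R$ that post-composes with $\iota_R^S$ to yield $\iota_P^S$ is unique (Lemma~\ref{lemma: factoring lemma} once more, now with $\psi$ the inclusion $R\hookrightarrow S$ and $\varphi$ the inclusion $P\hookrightarrow R$), we conclude $\iota_Q^R\circ\iota_P^Q=\iota_P^R$.

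I do not anticipate a serious obstacle. The one point requiring care is that all of $P$, $Q$, $R$, and $S$ must be $\Fcal$-centric — hence objects of~$\Lcal$ — for Lemma~\ref{lemma: factoring lemma} to apply, which holds by hypothesis on $P,Q,R$ and by the observation above for~$S$. Note in particular that, because every inclusion is routed through the single object~$S$, no transfinite induction over the (possibly infinitely many) conjugacy classes of $\Fcal$-centric subgroups is needed, so the non-finiteness of the linking system in the \pdash local compact setting causes no additional difficulty. This is in essence the argument of \cite[Prop.~1.5, Rem.~1.6]{JLL}.
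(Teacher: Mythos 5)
Your proposal is correct and complete: routing every inclusion through a fixed choice of lifts $\iota_P^S$ and invoking the uniqueness clause of Lemma~\ref{lemma: factoring lemma} is exactly the standard construction, and it is in essence the argument of the cited reference \cite[Prop.~1.5, Rem.~1.6]{JLL} (the paper itself gives no proof beyond that citation). Your preliminary check that $S$ is $\Fcal$-centric, so that all four objects lie in $\Lcal$ when the factoring lemma is applied, is the one point of care and you have handled it.
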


Lastly, we need restriction and corestriction of morphisms in fusion and linking systems. 
In a fusion system, a morphism 
$\varphi\colon P\to Q$ can be restricted to a subgroup
$A\subseteq P$ because the subgroup inclusion $A\hookrightarrow P$ is necessarily a morphism of~$\Fcal$ (Definition~\ref{definition: Fusion}(a)). Similarly, if $P_\varphi\definedas\im(\varphi\colon P\rightarrow Q)$, then the isomorphism $P\xrightarrow{\varphi} P_\varphi$ is a morphism of~$\Fcal$  
(Definition~\ref{definition: Fusion}(b)),
and we call it the corestriction of~$\varphi$ to~$P_\varphi$.

Once we have fixed a compatible subposet of inclusions in $\Lcal$ as in Lemma~\ref{lemma: distinguished inclusions}, we can also define restrictions and corestrictions in the linking system. 
For $A\subgroupeq P$, a morphism $\varphi\in
\Hom_\Lcal(P,Q)$ has a restriction $\restr{\varphi}{A} \definedas \varphi\circ
\iota_{A}^P$ in $\Hom_\Lcal(A,Q)$. The first part of the next lemma says that there is also a
unique corestriction $\corestrict{\varphi}$ to 
$P_\varphi\definedas\im(\pi(\varphi)\colon P\rightarrow Q)$. The same reference that we cite shows that morphisms in $\Lcal$ can be corestricted to any subgroup
containing~$P_\varphi$, but we do not need this generality.

\begin{lemma}\label{lemma: corestriction}
~\begin{enumerate}
\item \cite[Lemma 1.7(i)]{JLL} Let $\varphi\in \Hom_\Lcal(P,Q)$ and let $P_\varphi:= \pi(\varphi)(P)$.
There is a unique map $\corestrict{\varphi}\in \Hom_\Lcal(P,P_\varphi)$ such that $\varphi =
\iota_{P_\varphi}^Q \circ \corestrict{\varphi}$, giving a commuting lift to $\Lcal$ (on the right) of the commuting diagram in $\Fcal$ (on the left).
\[
\xymatrix{
P \ar[r]^-{\strut\corestrict{\pi(\varphi)}} 
  \ar[d]_-{\id}
     & P_\varphi\ar[d]^{\subgroupeq}
\\
P\ar[r]_{\pi(\varphi)}& Q
}
\hspace{60pt}
\xymatrix{
P \ar[r]^{\corestrict{\varphi}} 
  \ar[d]_-{\id}
    & P_\varphi\ar[d]^{\iota_{P_\varphi}^Q}
\\
P\ar[r]_{\varphi} & Q
}
\]

\item \cite[Prop.~A.2, Cor.~A.5]{BLO-LoopSpaces} Given an isomorphism $\varphi:P\to P'$ in $\Fcal$, every lift of $\varphi$ to $\Lcal$ is
an isomorphism.
\item Given a diagram in $\Fcal$ on the left, and a lift $\varphiwiggle$ of $\varphi$ to~$\Lcal$, there is a unique lift $\restr{\varphiwiggle}{P}$ of $\restr{\varphi}{P}$ making the diagram on the right commute in~$\Lcal$, and if $\restr{\varphi}{P}$ is an isomorphism, so is $\restr{\varphiwiggle}{P}$.
\[
\xymatrix{
P\ar[r]^-{\restr{\varphi}{P}}
 \ar[d]_-{\subgroupeq}
& Q
 \ar[d]^-{\subgroupeq}
\\
P'\ar[r]^-\varphi\ar[r] & Q'
}
\hspace{60pt}
\xymatrix{
P   \ar[r]^-{\restr{\varphiwiggle}{P}}
    \ar[d]_-\iota 
& Q\ar[d]^-\iota
\\
P'\ar[r]^-\varphiwiggle & Q'
}
\]
\end{enumerate}
\end{lemma}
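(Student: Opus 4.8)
Parts~(1) and~(2) are the cited results \cite[Lemma~1.7(i)]{JLL} and \cite[Prop.~A.2, Cor.~A.5]{BLO-LoopSpaces}, so the only thing to prove is part~(3), and the plan is to reduce it to part~(1) using the compatible system of inclusions from Lemma~\ref{lemma: distinguished inclusions}. Before starting, I would note that all four groups in the left-hand square are objects of~$\Lcal$ (they are assumed $\Fcal$-centric), and that the image $\varphi(P)$ is $\Fcal$-conjugate to the $\Fcal$-centric group $P$ via the isomorphism $\restr{\varphi}{P}$, hence is itself $\Fcal$-centric and so also an object of~$\Lcal$; this is the one spot where the hypotheses quietly do some work.

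The construction I would use is as follows: form the composite $\varphiwiggle\circ\iota_P^{P'}\in\Hom_\Lcal(P,Q')$, observe that $\pi$ sends it to $\restr{\varphi}{P}$ regarded as a map into $Q'$ (because $\iota$ lifts the inclusion poset), so that its image is $\varphi(P)\subgroupeq Q\subgroupeq Q'$; apply part~(1) to obtain the corestriction $\corestrict{\varphiwiggle\circ\iota_P^{P'}}\in\Hom_\Lcal(P,\varphi(P))$; and then set
\[
\restr{\varphiwiggle}{P}\definedas\iota_{\varphi(P)}^{Q}\circ\corestrict{\varphiwiggle\circ\iota_P^{P'}}\in\Hom_\Lcal(P,Q).
\]
That $\restr{\varphiwiggle}{P}$ is a lift of $\restr{\varphi}{P}$ is immediate from functoriality of $\pi$ and the fact that $\pi$ carries inclusions to inclusions and corestrictions to corestrictions; that the right-hand square commutes follows from the identity $\iota_Q^{Q'}\circ\iota_{\varphi(P)}^Q=\iota_{\varphi(P)}^{Q'}$ of Lemma~\ref{lemma: distinguished inclusions} together with the defining property of the corestriction.

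For uniqueness I would take any $\psi\in\Hom_\Lcal(P,Q)$ that lifts $\restr{\varphi}{P}$ and satisfies $\iota_Q^{Q'}\circ\psi=\varphiwiggle\circ\iota_P^{P'}$, corestrict it via part~(1) to a morphism $\corestrict{\psi}\in\Hom_\Lcal(P,\varphi(P))$, and observe that $\iota_{\varphi(P)}^{Q'}\circ\corestrict{\psi}=\iota_Q^{Q'}\circ\psi=\varphiwiggle\circ\iota_P^{P'}$; the uniqueness clause in part~(1) then identifies $\corestrict{\psi}$ with $\corestrict{\varphiwiggle\circ\iota_P^{P'}}$, forcing $\psi=\restr{\varphiwiggle}{P}$. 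The isomorphism statement is then immediate: if $\restr{\varphi}{P}$ is an isomorphism then $\varphi(P)=Q$, so $\iota_{\varphi(P)}^Q$ is the identity of $Q$ and $\restr{\varphiwiggle}{P}=\corestrict{\varphiwiggle\circ\iota_P^{P'}}$ is a lift of an $\Fcal$-isomorphism, hence an isomorphism in $\Lcal$ by part~(2). I do not expect a serious obstacle here; the only thing that needs care is keeping the several inclusion morphisms straight, which is exactly what Lemma~\ref{lemma: distinguished inclusions} is there to let us do.
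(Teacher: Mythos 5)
Your proposal is correct and follows the paper's own (very terse) proof exactly: the paper also proves (3) by applying part (1) to the composite $\varphiwiggle\circ\iota_P^{P'}$ and deduces the isomorphism statement from part (2). Your write-up simply fills in the details the paper leaves implicit, including the worthwhile observation that $\varphi(P)$ is $\Fcal$-centric and hence an object of~$\Lcal$.
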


\begin{proof}
For the first statement in~(3), apply (1) to $\varphiwiggle\,\circ\, \iota_P^{P'}$ (see also \cite[Prop.~2.11]{Libman-normalizer}). The second statement in (3) follows from~(2).
\end{proof}


\section{Normalizer fusion subsystems}
\label{section: normalizer fusion subsystems}

Quillen's Theorem~A allows the establishment of a homotopy equivalence between the nerve of a category and the nerve of a subcategory by studying the nerves of \overcategories\ or \undercategories\  for the inclusion. The proof of Theorem~\ref{theorem: reduce linking to centric radical} relies on \undercategories\ for its inductive strategy. In this section, we establish the first of a sequence of equivalences necessary for the proof. 

Suppose that $(S,\Fcal,\Lcal)$ is a \pdash local compact group, and $P\subgroupeq S$. Let $P\downto\Lcal$ be the \undercategory\ of~$P$: objects are morphisms $P\rightarrow Q$ in~$\Lcal$, and morphisms are commuting triangles.
Let $P\downtoNoniso\Lcal$ denote the full subcategory  of $P\downto\Lcal$ consisting of objects $P\rightarrow Q$ that are non-isomorphisms of~$\Lcal$. 
To analyze $P\downtoNoniso\Lcal$, whose nerve will be one stage in a sequence of equivalences, we use the 
``normalizer fusion subsystem" $\Normalizer{\Lcal}$ described in 
\cite[Sec.~2]{BLO-LoopSpaces} for \pdash local compact groups. We follow \cite{BCGLO}, where the analogue of Theorem~\ref{theorem: reduce linking to centric radical} is established for \pdash local \emph{finite} groups. The goal of the section is to prove the following. 

\begin{proposition}    \label{proposition: deformation retraction new}
\RetractionPropositionText
\end{proposition}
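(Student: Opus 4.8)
The plan is to construct the retraction functor $r\colon (P\downtoNoniso\Lcal)\to(\NormalizerPrime{\Lcal})$ and an auxiliary inclusion $j$ in the other direction, and then to verify that $|r|$ and $|j|$ are mutually inverse homotopy equivalences by exhibiting natural transformations $jr\Rightarrow\mathrm{id}$ and (trivially) $rj=\mathrm{id}$. First, I would recall the description of the normalizer category: an object of $\NormalizerPrime{\Lcal}$ is a morphism $P\to N_SP$-style inclusion into a subgroup normalized appropriately, and the full undercategory $P\downto\Lcal$ has objects $\varphi\colon P\to Q$. The key construction is: given an object $\varphi\colon P\to Q$ of $P\downtoNoniso\Lcal$, corestrict $\varphi$ to its image $P_\varphi=\pi(\varphi)(P)$ via Lemma~\ref{lemma: corestriction}(1), obtaining $\corestrict{\varphi}\colon P\to P_\varphi$ which is an isomorphism in $\Lcal$ by Lemma~\ref{lemma: corestriction}(2); then the retraction sends $\varphi$ to the composite of $\corestrict{\varphi}^{-1}$-conjugated data landing in the normalizer of $P$ inside $N_\Lcal P$. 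Concretely, $r$ should take $(\varphi\colon P\to Q)$ to the inclusion-type morphism $P\to N_S P$ determined by pulling the "$N$" structure back along $\corestrict{\varphi}$.

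Next I would set up the functoriality of $r$ on morphisms: a morphism in $P\downto\Lcal$ from $\varphi\colon P\to Q$ to $\varphi'\colon P\to Q'$ is a map $\psi\in\Hom_\Lcal(Q,Q')$ with $\psi\circ\varphi=\varphi'$; applying $\pi$ and using uniqueness of corestriction (Lemma~\ref{lemma: corestriction}(1)) and the factoring lemma (Lemma~\ref{lemma: factoring lemma}), $\psi$ restricts/corestricts to a well-defined morphism between the corresponding objects of $\NormalizerPrime{\Lcal}$. The non-isomorphism hypothesis is what guarantees we stay inside the full subcategory $P\downtoNoniso\Lcal$ and that the target genuinely lies in $\NormalizerPrime{\Lcal}$ (the subscript $\not\cong$ version), since $P\to P$ via an isomorphism would be excluded. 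I would then check $r\circ j=\mathrm{id}_{\NormalizerPrime{\Lcal}}$ on the nose, using that for an object already of the form $P\hookrightarrow N$ the corestriction to the image is, via the distinguished inclusions of Lemma~\ref{lemma: distinguished inclusions}, the identity-up-to-canonical-iso, so $r$ does nothing.

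The homotopy equivalence then follows from producing a natural transformation from $j\circ r$ to the identity functor on $P\downtoNoniso\Lcal$: for each object $\varphi\colon P\to Q$, the component is the morphism in $\Lcal$ realizing the corestriction $\corestrict{\varphi}$ (extended to a map of targets via the chosen inclusions), and naturality is again a diagram chase using Lemmas~\ref{lemma: factoring lemma}, \ref{lemma: distinguished inclusions}, and \ref{lemma: corestriction}. A natural transformation (in either direction) between functors induces a homotopy between the realizations of those functors, so $|j|\circ|r|\simeq\mathrm{id}$ and $|r|\circ|j|=\mathrm{id}$, giving the claimed equivalence of nerves.

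The main obstacle I anticipate is \emph{well-definedness and functoriality of $r$ in the compact (infinite) setting}: in \cite{BCGLO} one works with finite $p$-groups, where the relevant normalizer constructions and the finiteness of automorphism groups make several checks automatic; here one must be careful that $N_\phi$-type subgroups, the group $\extension$ (or $\Phat$) over which $\Fcal$-automorphisms extend, and the corestriction targets all behave well with respect to the size function on discrete $p$-toral groups, and that the saturation axiom~(III) (closure under increasing unions) is not secretly needed. Ensuring that the choice of lift in $\Lcal$ of each corestriction isomorphism is natural — rather than just existing objectwise — is the delicate point, and is exactly where Lemma~\ref{lemma: corestriction}(2) (every lift of an $\Fcal$-isomorphism is a $\Lcal$-isomorphism) and the uniqueness clauses of Lemmas~\ref{lemma: factoring lemma} and~\ref{lemma: corestriction}(1),(3) must be deployed to pin the construction down.
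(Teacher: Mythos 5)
Your overall strategy (a retraction functor plus a natural transformation $i\circ r\Rightarrow\mathrm{id}$, with $r\circ i=\mathrm{id}$ on the nose) is exactly the paper's, but there is a genuine gap at the heart of the construction of $r$. Given $\varphi\colon P\to Q$ with image $P_\varphi=\pi(\varphi)(P)$, you propose to send $\varphi$ to ``the inclusion-type morphism determined by pulling the $N$-structure back along $\corestrict{\varphi}$.'' That pull-back is not a defined operation: $\corestrict{\varphi}^{-1}$ is only an isomorphism $P_\varphi\to P$ in $\Lcal$, and neither $Q$ nor $N_Q(P_\varphi)$ can be ``conjugated'' by it inside the fusion system unless the underlying $\Fcal$-isomorphism $P_\varphi\to P$ \emph{extends} to a morphism $f_\varphi\colon N_S(P_\varphi)\to N_S P$ in $\Fcal$. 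Producing such an extension is precisely where the hypothesis that $P$ is fully normalized enters (via the extension axiom (II) of saturation, in the form of \cite[Lemma~2.2(b)]{BLO-LoopSpaces}); the target of $r(\varphi)$ is then $Q_{f_\varphi}=f_\varphi\bigl(N_Q(P_\varphi)\bigr)\subgroupeq N_S P$, not $N_S P$ itself, and the morphism $r(\varphi)$ is a composite of the corestricted isomorphisms with a distinguished inclusion, not an inclusion. Without this extension step your construction does not produce an object of $\Normalizer{\Lcal}$ at all. A second, smaller omission: you assert that non-isomorphisms are preserved because ``$P\to P$ via an isomorphism would be excluded,'' but what actually has to be shown is that $Q_{f_\varphi}$ \emph{properly} contains $P$ when $\varphi$ is not an isomorphism; this follows from the fact that a proper subgroup of a discrete \pdash toral group is proper in its normalizer (\cite[Lemma~1.8]{BLO-Discrete}), applied to $P_\varphi\subsetneq Q$.

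You correctly flag that naturality of the choices is the delicate point, but the resolution is not supplied by Lemma~\ref{lemma: corestriction}(2) and the uniqueness of corestrictions alone. The choice that threatens functoriality is the choice of $f_\varphi$ (and its lift $F_\varphi$ to $\Lcal$) for each object $\varphi$, which is genuinely non-canonical. The fix is to show that for each morphism $\beta\colon\varphi\to\varphi'$ in $P\downto\Lcal$ there is a \emph{unique} $\hat\beta\colon Q_{f_\varphi}\to Q'_{f_{\varphi'}}$ compatible with the chosen data, using that every morphism of $\Lcal$ is a categorical monomorphism \cite[Prop.~A.2, Cor.~A.5]{BLO-LoopSpaces}; uniqueness then forces functoriality and simultaneously makes the comparison maps $\eta(\varphi)\colon Q_{f_\varphi}\to Q$ into a natural transformation. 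That monomorphism property is the missing tool in your outline.
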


We give the definition of a normalizer subsystem and basic lemmas, and then follow \cite{BCGLO} in defining a retraction functor. The proof of Proposition~\ref{proposition: deformation retraction new} concludes the section. Most of the section consists of suitable specialization or generalization of results of \cite{BLO-LoopSpaces} and \cite{BCGLO}. 

\begin{definition}    \label{definition: normalizer fusion new}
Let $\Fcal$ be a saturated fusion system over a discrete \pdash toral group~$S$, and let $P\subgroupeq S$ a fully $\Fcal$-normalized subgroup. The 
\defining{normalizer of~$P$ in~$\Fcal$}, denoted
$N_{\Fcal}P$, is a fusion system over the discrete $p$-toral group $N_S P$. If 
$Q,R\subgroupeq N_S P$, then $\Hom_{N_{\Fcal}P}(Q,R)$ is given by
\[
\Big\{ \varphi\in \Hom_{\Fcal}(Q,R) 
\suchthat{\largestrut
\exists~\varphi'\in \Hom_{\Fcal}(Q\cdot P,R\cdot P)
\mbox{\,with\,}\restr{\varphi'}{Q}=\varphi
\mbox{ and }\varphi'(P)=P}
\Big\}.
\]
\end{definition}

Note that the objects $Q$ and $R$ in Definition~\ref{definition: normalizer fusion new} are subgroups of~$N_S P$, but do not have to contain $P$ itself. If they happen to contain~$P$, then $\Hom_{N_\Fcal P}(Q,R)$ is just the subset of $\Hom_{\Fcal}(Q,R)$ of consisting of morphisms that take $P$ to~$P$. Otherwise, the definition is requiring that morphisms extend to the smallest subgroups that \emph{do} contain~$P$, namely $Q\cdot P$ and $R\cdot P$, in a way that takes $P$ to~$P$. 

Saturation is a key technical requirement, and fortunately is inherited by the normalizer fusion system. 

\begin{lemma}
\cite[Thm.~2.3]{BLO-LoopSpaces}
If $\Fcal$ is a saturated fusion system, then so is~$N_\Fcal P$.
\end{lemma}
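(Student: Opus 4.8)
The plan is to verify the three saturation axioms (I), (II), (III) of Definition~\ref{definition: SaturatedFusion} for $N_\Fcal P$, viewed as a fusion system over the discrete \pdash toral group $T\definedas N_S P$, in each case transferring the relevant property of the saturated system $\Fcal$ by passing to the subgroup $Q\cdot P$. (Recall that $N_\Fcal P$ is defined only when $P$ is fully $\Fcal$-normalized, so that hypothesis is in force.) The argument is modeled on the \pdash local \emph{finite} case in \cite{BCGLO}; the one genuinely new ingredient is the continuity axiom~(III), which I expect to be the main obstacle. As preliminaries: $N_\Fcal P$ really is a fusion system over $T$ in the sense of Definition~\ref{definition: Fusion}, since conjugation by an element of $N_S P$ is its own extension to $Q\cdot P\to R\cdot P$ fixing $P$ (axiom~(a)), and factorizations through isomorphisms are inherited from $\Fcal$ by restricting and corestricting a witnessing extension (axiom~(b)). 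Since every morphism of $N_\Fcal P$ is a morphism of $\Fcal$ and $\Aut_Q(Q)\subgroupeq\Aut_{N_\Fcal P}(Q)$, the group $\Out_{N_\Fcal P}(Q)$ is a subquotient of $\Out_\Fcal(Q)$; and $\Out_\Fcal(Q)$ is finite for every $Q$ (apply axiom~(I) of $\Fcal$ to a fully $\Fcal$-normalized conjugate and use that $\Out_\Fcal$ is a conjugacy invariant), so the finiteness clause of axiom~(I) for $N_\Fcal P$ is automatic. We will also use that $N_T Q=N_S Q\cap N_S P$ and $C_T Q=C_S Q\cap N_S P$ for $Q\subgroupeq T$.

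For axiom~(I), let $Q\subgroupeq T$ be fully normalized in $N_\Fcal P$; what remains is to show $Q$ is fully centralized in $N_\Fcal P$ and that $\Out_T(Q)$ is a Sylow \pdash subgroup of $\Out_{N_\Fcal P}(Q)$. The key observation is that morphisms of $N_\Fcal P$ out of $Q$ are precisely the restrictions to $Q$ of $\Fcal$-morphisms out of $Q\cdot P$ that send $P$ to $P$; this identifies $\Aut_{N_\Fcal P}(Q)$ and $\Aut_T(Q)$ with the images in $\Aut(Q)$ of the subgroups of $\Aut_\Fcal(Q\cdot P)$ and $\Aut_S(Q\cdot P)$ consisting of automorphisms stabilizing both $P$ and $Q$. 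Using that $P$ is fully $\Fcal$-normalized, together with a Sylow/counting argument on the $\Fcal$-conjugacy class of $Q\cdot P$ and axiom~(I) of $\Fcal$ at a fully $\Fcal$-normalized representative, one obtains that $\Out_S(Q\cdot P)$ is Sylow in $\Out_\Fcal(Q\cdot P)$; intersecting with the subgroups above then gives that $\Out_T(Q)$ is Sylow in $\Out_{N_\Fcal P}(Q)$, and the parallel comparison of centralizer sizes shows that full normalization of $Q$ in $N_\Fcal P$ forces full centralization. The role of ``$P$ fully $\Fcal$-normalized'' here is exactly to make the Sylow bookkeeping go through.

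For axiom~(II), let $\varphi\in\Hom_{N_\Fcal P}(Q,T)$ with $\varphi(Q)$ fully centralized in $N_\Fcal P$, and fix an extension $\varphi'\colon Q\cdot P\to\varphi(Q)\cdot P$ of $\varphi$ with $\varphi'(P)=P$. Write $N_\varphi$ for the subgroup of $N_T Q$ built from $\varphi$ as in axiom~(II) but with $(\Fcal,S)$ replaced by $(N_\Fcal P,T)$. The steps are: (i) after precomposing and postcomposing $\varphi$ with suitable isomorphisms of $N_\Fcal P$---harmless for the conclusion---reduce to the case where $\varphi(Q)\cdot P$ is fully centralized in $\Fcal$, invoking the $Q\cdot P$-comparison of the previous paragraph; (ii) check the inclusion $N_\varphi\subgroupeq N_{\varphi'}^{\Fcal}$, where $N_{\varphi'}^{\Fcal}$ is the subgroup attached to $\varphi'$ by axiom~(II) of $\Fcal$ (note $P\subgroupeq Q\cdot P\subgroupeq N_{\varphi'}^{\Fcal}$ automatically); (iii) apply axiom~(II) of $\Fcal$ to get $\bar\varphi'\in\Hom_\Fcal(N_{\varphi'}^{\Fcal},S)$ extending $\varphi'$; (iv) restrict $\bar\varphi'$ to $N_\varphi\cdot P$ and then to $N_\varphi$ to obtain an extension of $\varphi$, which lies in $N_\Fcal P$ because it still sends $P$ to $P$, a property inherited from $\varphi'$. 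Steps (i) and (ii) are the technical core and again rely on $P$ being fully $\Fcal$-normalized.

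Finally, axiom~(III)---the \textbf{main obstacle}, as it has no finite-group precedent. Let $Q_1\subgroupeq Q_2\subgroupeq\cdots$ be an increasing chain in $T$ with union $Q_\infty$, and let $\phi\colon Q_\infty\to T$ satisfy $\restr{\phi}{Q_n}\in\Hom_{N_\Fcal P}(Q_n,T)$ for all $n$. Axiom~(III) of $\Fcal$ applied to $\{Q_n\}$ already gives $\phi\in\Hom_\Fcal(Q_\infty,S)$, so it suffices to produce a single extension $\phi'\colon Q_\infty\cdot P\to T$ of $\phi$ with $\phi'(P)=P$: then axiom~(III) of $\Fcal$ applied to the chain $\{Q_n\cdot P\}$, whose union is $Q_\infty\cdot P$, shows $\phi'\in\Hom_\Fcal$, hence $\phi\in\Hom_{N_\Fcal P}(Q_\infty,T)$. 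For each $n$, the set $E_n$ of $\psi\in\Hom_\Fcal(Q_n\cdot P,T)$ with $\restr{\psi}{Q_n}=\restr{\phi}{Q_n}$ and $\psi(P)=P$ is nonempty by hypothesis, restriction induces maps $E_{n+1}\to E_n$, and a compatible family in $\llim_n E_n$ is exactly a choice of $\phi'$. The difficulty is that $\llim_n E_n$ could a priori be empty, since the chosen extensions need not be compatible. I would handle it by a compactness argument: an element of $E_n$ is determined by its restriction to $P$ (a bijection $P\to P$, i.e.\ an automorphism, which is moreover forced to intertwine $\restr{\phi}{Q_n}$ in a prescribed way), and the constraints on this restriction only tighten as $n$ grows; using this, together with finiteness of $\Out_\Fcal(P)$, one replaces $\{E_n\}$ by a cofinal system of nonempty subsets with surjective transition maps, so that $\llim_n E_n\neq\emptyset$. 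Granting this last point, all three axioms hold, and the remaining details of (I) and (II) are a routine transcription of the arguments in \cite{BCGLO}.
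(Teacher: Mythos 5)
The paper does not prove this lemma; it simply cites \cite[Thm.~2.3]{BLO-LoopSpaces}, so there is no internal argument for your proof to be compared against. Taken as a standalone attempt, your outline for axioms (I) and (II) is plausible and tracks the $p$-local finite argument of \cite{BCGLO}, though the ``Sylow bookkeeping'' for (I) and the reduction in step (i) of (II) are asserted rather than carried out.

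The genuine gap is exactly where you expected it, in axiom (III), and it is the compactness step, which as written does not hold up. You correctly reduce the problem to showing $\varprojlim_n E_n\neq\emptyset$, and you correctly observe that an element of $E_n$ is determined by its restriction to $P$, so that $E_n$ is in bijection with a decreasing chain of nonempty subsets $A_n\subseteq\Aut_\Fcal(P)$ (those $\alpha$ agreeing with $\phi$ on $Q_n\cap P$, intertwining $c_q\mapsto c_{\phi(q)}$ for $q\in Q_n$, and arising by restriction from an $\Fcal$-morphism on $Q_n\cdot P$). But a decreasing chain of nonempty sets routinely has empty intersection, and the finiteness you invoke does not supply the needed compactness: finiteness of $\Out_\Fcal(P)$ at best controls the images of the $A_n$ under $\Aut_\Fcal(P)\twoheadrightarrow\Out_\Fcal(P)$, and since $P$ is in general an infinite discrete $p$-toral group, the kernel $\Inn(P)=P/Z(P)$ of that surjection is infinite. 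So the sets $A_n$ need not be finite, and nothing you have said forces the transition maps $E_{n+1}\to E_n$ to be eventually surjective onto a nonempty cofinal subsystem. Unless you can produce an actual finiteness or stabilization statement --- for example by showing the $A_n$ all lie in a single coset of a finite subgroup, or that the chain of centralizer-type subgroups measuring the difference between elements of $A_n$ stabilizes for size reasons --- the inverse-limit claim is an assertion, not an argument. This is exactly the kind of infinite-chain subtlety that distinguishes the $p$-local compact setting from the finite one, and is the reason the present paper outsources this lemma to the careful treatment in \cite{BLO-LoopSpaces} rather than reproving it.
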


We would like an associated centric linking system. If $\Lcal$ is a centric linking system associated to~$\Fcal$, there is a candidate linking system $N_\Lcal P$ associated to $N_{\Fcal} P$ that is given by a subcategory of~$\Lcal$. 
In the following definition, note that subgroups of $N_S P$ that are $\Fcal$-centric are necessarily $N_\Fcal P$-centric as well.

\begin{definition}  \label{definition: linking normalizer new}

The category $N_\Lcal P$, the \defining{normalizer in $\Lcal$ of~$P$}, 
is defined as a subcategory of~$\Lcal$. 
The object set of $N_\Lcal P$ is given by $N_{\Fcal}P$-centric subgroups.
The morphism sets $\Hom_{N_\Lcal P}(Q,R)$ are given by
\[
\left\{ 
\varphi\in \Hom_{\Lcal}(Q\cdot P, R\cdot P) 
\suchthat{\restr{\pi(\varphi)}{Q}\in \Hom_{N_{\Fcal} P}(Q,R) \mbox{ and }\pi(\varphi)(P)=P}
\right\},
\]
where $\pi\colon \Lcal \to \Fcal$ is the projection from the linking system to the fusion system. 
\end{definition}

\begin{lemma}\cite[Lemma~1.21]{Gonzalez}
If $P$ is a fully normalized subgroup in~$\Fcal$, 
then the category $N_\Lcal P$ is a centric linking system associated to $N_{\Fcal} P$. 
\end{lemma}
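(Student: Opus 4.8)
The plan is to produce the data required by Definition~\ref{definition: linking} directly and then check axioms~(A), (B), (C), following the \pdash local finite group argument of \cite{BCGLO} (this is \cite[Lemma~1.21]{Gonzalez}) and being careful at the one step where finiteness is genuinely used. First I would verify that $N_\Lcal P$ really is a subcategory of $\Lcal$: one uses that $Q\cdot P$ is $\Fcal$-centric whenever $Q\subgroupeq N_S P$ is $N_\Fcal P$-centric (part of the normalizer-system machinery, cf.~\cite{BLO-LoopSpaces,Gonzalez}), so that $\Hom_\Lcal(Q\cdot P, R\cdot P)$ makes sense, and then closure under composition and under identities follows from $\pi(\psi\circ\varphi)=\pi(\psi)\circ\pi(\varphi)$, the compatibility of restriction with composition, and Lemma~\ref{lemma: distinguished inclusions}. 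The structure functor $\pi'\colon N_\Lcal P\to N_\Fcal P$ is the identity on objects and sends $\varphi\in\Hom_{N_\Lcal P}(Q,R)\subgroupeq\Hom_\Lcal(Q\cdot P, R\cdot P)$ to $\restr{\pi(\varphi)}{Q}$; the distinguished monomorphism $\delta'_Q\colon Q\to\Aut_{N_\Lcal P}(Q)$ sends $g\in Q$ to $\delta_{Q\cdot P}(g)\in\Aut_\Lcal(Q\cdot P)$, which is a morphism of $N_\Lcal P$ since $g\in N_S P$ forces $c_g(P)=P$, and is injective because $\delta_{Q\cdot P}$ is.

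Axioms~(B) and~(C) are inherited from those for $\Lcal$: for~(B), $\pi'(\delta'_Q(g))=\restr{\pi(\delta_{Q\cdot P}(g))}{Q}=\restr{c_g}{Q}$ is conjugation by $g$ on $Q$, as required; for~(C), the square required in $N_\Lcal P$ for $\varphi$ and $g\in Q$ is just the square of axiom~(C) for $\Lcal$ applied to $\varphi\colon Q\cdot P\to R\cdot P$ and $g\in Q\cdot P$, whose edges lie in $N_\Lcal P$ because $g$ and $\pi(\varphi)(g)$ lie in $N_S P$.

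The content is axiom~(A). Surjectivity of $\pi'$ on morphisms is immediate: a $\psi\in\Hom_{N_\Fcal P}(Q,R)$ extends, by definition of $N_\Fcal P$, to some $\psi'\in\Hom_\Fcal(Q\cdot P, R\cdot P)$ fixing $P$, and any lift of $\psi'$ to $\Lcal$ lies in $\Hom_{N_\Lcal P}(Q,R)$ and maps to $\psi$. The $Z(Q)$-action $\varphi\mapsto\varphi\circ\delta_{Q\cdot P}(z)$ on $\Hom_{N_\Lcal P}(Q,R)$ is well defined, preserves the fibres of $\pi'$ (as $\restr{c_z}{Q}=\id$ and $zPz^{-1}=P$ for $z\in Z(Q)$), and is free: $\varphi\circ\delta_{Q\cdot P}(z)=\varphi$ gives $\pi(\varphi)\circ c_z=\pi(\varphi)$, so $c_z=\id_{Q\cdot P}$ because $\pi(\varphi)$ is injective, so $z\in Z(Q\cdot P)$, so $z=1$ by the freeness of the $Z(Q\cdot P)$-action in axiom~(A) for $\Lcal$. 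It remains to show each nonempty fibre of $\pi'$ is a single $Z(Q)$-orbit. Given $\varphi_1,\varphi_2$ over a common $\psi$, factoring $\pi(\varphi_1)$ and $\pi(\varphi_2)$ through their common image (Definition~\ref{definition: Fusion}(b)) produces $\gamma\in\Aut_\Fcal(Q\cdot P)$ with $\restr{\gamma}{Q}=\id$, $\gamma(P)=P$, and $\pi(\varphi_2)=\pi(\varphi_1)\circ\gamma$. Granting the claim below that such a $\gamma$ equals $c_{x_0}$ for some $x_0\in Z(Q)$, we get $\pi(\varphi_2)=\pi\bigl(\varphi_1\circ\delta_{Q\cdot P}(x_0)\bigr)$, so axiom~(A) for $\Lcal$ gives $\varphi_2=\varphi_1\circ\delta_{Q\cdot P}(x_0)\circ\delta_{Q\cdot P}(z')$ for some $z'\in Z(Q\cdot P)$; and since $Q$ is $N_\Fcal P$-centric and $Q\cdot P\subgroupeq N_S P$ we have $C_{Q\cdot P}(Q)\subgroupeq C_{N_S P}(Q)\subgroupeq Q$, hence $C_{Q\cdot P}(Q)=Z(Q)$ and $Z(Q\cdot P)\subgroupeq Z(Q)$, so $\varphi_2$ lies in the $Z(Q)$-orbit of $\varphi_1$.

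The substantive point, which I expect to be the main obstacle, is the claim just used: every $\gamma\in\Aut_\Fcal(Q\cdot P)$ with $\restr{\gamma}{Q}=\id$ and $\gamma(P)=P$ is conjugation by an element of $Z(Q)$ — equivalently, the $\Fcal$-extensions to $Q\cdot P$, fixing $P$, of a given morphism of $N_\Fcal P$ form a torsor under $\{c_{x_0}\mid x_0\in Z(Q)\}$. This is precisely where the $N_\Fcal P$-centricity of $Q$ is used, and it genuinely fails without that hypothesis. I would prove it by studying $\gamma$ through its associated crossed homomorphism $x\mapsto\gamma(x)x^{-1}$, which takes values in $P$ since $\gamma$ is trivial on $Q$ and modulo $P$, and combining the saturation of $N_\Fcal P$ (\cite[Thm.~2.3]{BLO-LoopSpaces}), the finiteness of $\Out_\Fcal(Q\cdot P)$, and $N_\Fcal P$-centricity of $Q$ to force $\gamma$ to be conjugation by an element of $N_S(Q\cdot P)$, which then necessarily lies in $Q\cap C_S(Q)=Z(Q)$. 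This is the step of \cite{BCGLO} that must be rerun in the discrete \pdash toral setting: the finite Sylow and counting arguments there have to be replaced by ones using the well-ordering of sizes (Definition~\ref{def:dpt}) and the saturation axioms (Definition~\ref{definition: SaturatedFusion}), which is where most of the extra work lies.
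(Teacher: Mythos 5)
The paper gives no proof of this lemma; it is quoted from \cite[Lemma~1.21]{Gonzalez}, so your proposal has to stand on its own. The scaffolding is fine: defining $\pi'(\varphi)=\restr{\pi(\varphi)}{Q}$ and $\delta'_Q=\restr{\delta_{Q\cdot P}}{Q}$, inheriting axioms (B) and (C) from $\Lcal$, and reducing axiom (A) to surjectivity, freeness, and transitivity of $Z(Q)$ on fibres are all correct, as is your reduction of transitivity to the claim that every $\gamma\in\Aut_\Fcal(Q\cdot P)$ with $\restr{\gamma}{Q}=\id$ and $\gamma(P)=P$ is $c_{x_0}$ for some $x_0\in Z(Q)$.

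The genuine gap is that this claim --- which you correctly identify as the whole content --- is not proved, and the route you sketch (crossed homomorphisms, finiteness of $\Out_\Fcal(Q\cdot P)$, counting/Sylow arguments rerun for discrete \pdash toral groups) is both unexecuted and pointed in the wrong direction. No such machinery is needed: $\gamma$ and $\id_{Q\cdot P}$ are two extensions to $Q\cdot P$ of $\id_Q$ \emph{inside the fusion system $N_\Fcal P$}, where $Q$ \emph{is} centric by hypothesis and which is saturated precisely because $P$ is fully normalized. The uniqueness-of-extensions statement (Lemma~\ref{lemma: uniqueness of extension}, i.e.\ \cite[Prop.~2.8]{BLO-Discrete}), applied in $N_\Fcal P$ rather than in $\Fcal$, immediately gives $\gamma=c_z$ with $z\in Z(Q)$. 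Note that your outline never actually uses the hypothesis that $P$ is fully normalized; its only role is to make $N_\Fcal P$ saturated so that this lemma applies, which is a sign the key input was missed. A second, smaller gap: you assert by appeal to ``normalizer-system machinery'' that $Q\cdot P$ is $\Fcal$-centric whenever $Q$ is $N_\Fcal P$-centric. This is needed before $\Hom_\Lcal(Q\cdot P,R\cdot P)$ is even defined, and it is not immediate --- one must bound $C_S$ of \emph{every} $\Fcal$-conjugate of $Q\cdot P$, not just observe $C_S(Q\cdot P)\subgroupeq C_{N_SP}(Q)\subgroupeq Q$ for the given $Q$ --- so it requires an argument, not a citation.
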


Before we go on to \undercategories, we pause to note easy properties of $N_\Fcal P$.

\begin{lemma}   \label{lemma: P in Q}
If $\varphi\colon P\rightarrow Q$ is a morphism in $N_\Fcal P$, then $P\triangleleft Q$ and $\varphi(P)=P$.
\end{lemma}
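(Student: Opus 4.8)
The claim is that if $\varphi\colon Q\to R$ is a morphism in $N_\Fcal P$ — here $Q$ and $R$ are subgroups of $N_S P$, not necessarily containing $P$ — then in fact $P\triangleleft Q$ and $\varphi(P)=P$. Wait: I should reread. The statement says ``$\varphi\colon P\to Q$ is a morphism in $N_\Fcal P$, then $P\triangleleft Q$''; so here the \emph{source} is $P$ itself, and $Q$ is an arbitrary subgroup of $N_S P$. The plan is to unwind Definition~\ref{definition: normalizer fusion new} applied to the pair of objects $(P,Q)$. By definition of $\Hom_{N_\Fcal P}(P,Q)$, the morphism $\varphi$ extends to a morphism $\varphi'\in\Hom_\Fcal(P\cdot P, Q\cdot P)=\Hom_\Fcal(P, Q\cdot P)$ with $\restr{\varphi'}{P}=\varphi$ and $\varphi'(P)=P$. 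Since $P\cdot P=P$, we get $\varphi'=\varphi$, and therefore $\varphi(P)=P$ directly. That disposes of the second assertion.

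For the first assertion, $P\triangleleft Q$: since $\varphi$ is a morphism \emph{to} $Q$ and $\varphi$ is a monomorphism with image $\varphi(P)=P$, we conclude $P=\varphi(P)\subseteq Q$, so $P$ is a subgroup of $Q$. To see it is normal, I would again extend: applying Definition~\ref{definition: normalizer fusion new} to the identity object $Q$ (or rather, using that $Q\subseteq N_S P$), recall that $Q$ being an object of $N_\Fcal P$ only requires $Q\subseteq N_S P$. But $P\subseteq Q$ together with $Q\subseteq N_S P$ gives $P\triangleleft Q$: indeed every element of $N_S P$ normalizes $P$ by definition of $N_S P$, so a fortiori every element of $Q\subseteq N_S P$ normalizes $P$, i.e.\ $P\triangleleft Q$. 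Thus both conclusions follow, the first from $P\subseteq Q\subseteq N_S P$ and the second from the extension condition in the definition with $P\cdot P=P$.

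I do not expect any genuine obstacle here; the lemma is essentially a direct unpacking of the definition of $N_\Fcal P$ specialized to the case where the source object is $P$. The only mild subtlety is the bookkeeping observation that $P\cdot P=P$, which collapses the ``extends to $Q\cdot P$'' clause and forces $\varphi$ itself to satisfy $\varphi(P)=P$; and the equally elementary observation that all objects of $N_\Fcal P$ lie inside $N_S P$, which forces normality of $P$ in any such object containing it. So the write-up is just: ``By Definition~\ref{definition: normalizer fusion new}, since $P\cdot P=P$, the morphism $\varphi$ itself satisfies $\varphi(P)=P$; in particular $P=\varphi(P)\subseteq Q$, and since $Q\subseteq N_S P$ we have $P\triangleleft Q$.''
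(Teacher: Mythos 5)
Your proof is correct and follows essentially the same route as the paper's: unwind Definition~\ref{definition: normalizer fusion new} with source $P$, note $P\cdot P=P$ so the extension $\varphi'$ forces $\varphi(P)=P$, and then deduce normality from $P=\varphi(P)\subseteq Q\subseteq N_S P$. No issues.
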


\begin{proof}
If $\varphi\colon P\rightarrow Q$ is a morphism in $N_\Fcal P$, then the definition says that there is a morphism $\varphi'\colon P\rightarrow Q\cdot P$ such that $\restr{\varphi'}{P}=\varphi$ and $\varphi'(P)=P$. Hence $\varphi(P)=P$. Since $Q\subgroupeq N_S P$, we know $P$ is normal in~$Q$. 
\end{proof}

Next we define the categories used for the proof of the main result.

\begin{definition}  \label{definition: linking normalizer new categories}
\hfill
\begin{enumerate}

\item We write $\Normalizer{\Lcal}$ to denote the 
\undercategory\   (or ``coslice category") of $P$ in $N_\Lcal(P)$, i.e. the category whose objects are morphisms $P\rightarrow Q$ of $N_\Lcal P$ and whose morphisms are commuting triangles under~$P$. 
\item We write $\NormalizerPrime{\Lcal}$ (resp., $P\downtoNoniso\Lcal$) to denote the full subcategory of $\Normalizer{\Lcal}$ (resp. $P\downto\Lcal$) whose objects are maps $P\rightarrow Q$ that are not isomorphisms in~$\Lcal$. 
\end{enumerate}
\end{definition}

The bulk of the work of this section is to construct a retraction functor $r$ from $P\downarrow \Lcal$ to $\Normalizer{\Lcal}$
and show that the retraction restricts to a functor from 
$P\downtoNoniso\Lcal$ to $\NormalizerPrime{\Lcal}$ (following the model of \cite{BCGLO}). 
Let $P\xlongrightarrow{\varphi} Q$ be an object in $P\downto\Lcal$. In an ideal world, we would like to construct an object $P\rightarrow Q_\varphi$ of $\Normalizer{\Lcal}$
that depends only on~$\varphi$. Sometimes this works: 
if $\varphi(P)=P$, then we will indeed be able to take $Q_\varphi = Q \ints
N_SP$. In general, however, we only have 
$\varphi(P)\cong P$; we would like to
twist everything by this isomorphism, but it turns out that such a twist requires making a choice, and unfortunately, the group
$Q_\varphi$ is not uniquely defined by~$\varphi$.
However there will still be coherent maps between the outcomes for all of the objects $\varphi$ in $P\downarrow\Lcal$, making the retraction into a functor.

\begin{construction}   \label{construction: retraction to normalizer new}
For a fully normalized subgroup $P$ and a morphism $P\xlongrightarrow{\varphi} Q$ in~$\Lcal$, let  
$\Pimage{\varphi}
    \definedas \im(\pi(\varphi)\colon P\rightarrow Q)$. 
By Lemma~\ref{lemma: corestriction}, there exists a unique morphism ${\corestrict{\varphi}}$ in $\Lcal$ such that $\varphi=\iota_{\Pimage{\varphi}}^{Q}\circ{\corestrict{\varphi}}$ (the ``corestriction" of $\varphi$ to~$P_\varphi$). 
Then $\Pimage{\varphi}$ is necessarily $\Fcal$-isomorphic to $P$, so there exists a morphism 
$f_{\varphi}\colon N_S(\Pimage{\varphi})\rightarrow N_S P$ in $\Fcal$ whose restriction to $\Pimage{\varphi}$
corestricts to an isomorphism to $P\subseteq N_S P$ (see \cite[Lemma~2.2(b)]{BLO-LoopSpaces} with $K=\Aut P$). If $\varphi$ happens to be a morphism in $N_\Lcal P$, i.e. if $P_\varphi=P$, then we choose $f_\varphi$ to be the identity map. Then define
\[
Q_{f_\varphi}\definedas
\im\left[\largestrut
f_{\varphi}\colon
  N_S(\Pimage{\varphi})\cap Q 
  \rightarrow N_S P
\right].
\]
In~$\Fcal$ we have a diagram with restriction/corestriction morphisms of $f_\varphi$ labelled with underlines:
\begin{equation}   \label{diagram: factoring diagram in Fcal new}
\begin{gathered}
\xymatrix{
P  
  \ar[r]^{\cong}
  \ar[ddr]_-{\pi({\corestrict{\varphi}})}
  &P \ar[r]^-{\subseteq}
  & Q_{f_\varphi} 
     \ar[rr]^-{\subseteq}
     \ar[rd]^-{\subgroupeq\circ({\corestrict{f_\varphi}})^{-1}}
  && N_{S}(P)
\\
&&&Q\\
&\Pimage{\varphi}  
  \ar[r]_-{\subseteq}
  \ar[uu]_-{\corestrict{\corestrict{f_\varphi}}}^{\cong}
  &N_S(\Pimage{\varphi})\cap Q 
     \ar[rr]_-{\subseteq}
     \ar[uu]_-{{\corestrict{f_\varphi}}}^{\cong}
     \ar[ur]_-{\subseteq}
&& N_{S}(\Pimage{\varphi}).
  \ar[uu]_-{f_\varphi}
}
\end{gathered}
\end{equation}

\end{construction}

We want to lift diagram \eqref{diagram: factoring diagram in Fcal new} 
to~$\Lcal$, as shown in \eqref{diagram: factoring diagram in Lcal} below. We first choose a lift $F_\varphi$ for~$f_\varphi$ (using the identity for $F_\varphi$ if $P=\Pimage{\varphi}$),
and then we lift the subset containments of 
\eqref{diagram: factoring diagram in Fcal new} as the preferred inclusion maps of~$\Lcal$, all marked simply as $\iota$ to declutter the diagram. By Lemma~\ref{lemma: corestriction}, there are unique
isomorphisms ${\corestrict{F_\varphi}}$ and ${\corestrict{\corestrict{F_\varphi}}}$ to fill in the other vertical arrows. 
We define $\eta(\varphi)=\iota\circ {\corestrict{F_\varphi}}^{-1}$ 
to fill in the dotted arrow. Further, we already have a lift of the diagonal arrow $P\rightarrow\Pimage{\varphi}$, and we fill in $P\rightarrow P$ with the composite of isomorphisms ${\corestrict{\corestrict{F_\varphi}}}\circ{\corestrict{\varphi}}$: 
\begin{equation}   \label{diagram: factoring diagram in Lcal}
\begin{gathered}
\xymatrix{
P 
  \ar[r]^{\cong}
  \ar[ddr]_-{{\corestrict{\varphi}}}
  &P \ar[r]^-{\iota}
  & Q_{f_\varphi} 
     \ar@{-->}[dr]^-{\eta(\varphi)=\iota\circ {\corestrict{F_\varphi}}^{-1}}
     \ar[rr]^-{\iota}
  && N_{S}(P)
\\
&&&Q
\\
&\Pimage{\varphi}  
  \ar[r]_-{\iota}
  \ar[uu]_-{\corestrict{\corestrict{F_\varphi}}}^{\cong}
  &N_S(\Pimage{\varphi})\cap Q 
     \ar[ur]_-{\iota}
     \ar[rr]_-{\iota}
  \ar[uu]_-{{\corestrict{F_\varphi}}}^{\cong}
&& N_{S}(\Pimage{\varphi}).
  \ar[uu]_-{F_\varphi}
}
\end{gathered}
\end{equation}
Given $\varphi$ and choices of $f_\varphi$ and $F_\varphi$, the rest of the diagram is uniquely determined.  We define 
\begin{equation}\label{eq: define r}
r(\varphi)
=\iota_P^{Q_{f_{\varphi}}}\circ 
\left({\corestrict{\corestrict{F_\varphi}}}
     \circ {\corestrict{\varphi}}\right)
\in\Obj(\Normalizer{\Lcal})
\end{equation}
and we observe that 
\begin{equation}
    \eta(\varphi)=\iota\,_{N_Q(P_\varphi)}^{Q_{f_{\varphi}}}\circ {\corestrict{F_\varphi}}^{-1}
\end{equation}
is a morphism in $P\downto\Lcal$ from $r(\varphi)$ to $\varphi$. \\

\begin{remark}~ 
\begin{enumerate}
    \item If $\varphi\colon P\rightarrow Q$ is in $\Lcal$ and $\pi(\varphi)(P)=P$, then $Q_{f_\varphi}=N_Q P$.
\item $r(P\xrightarrow{\ \iota\ }Q)
     =(P\xrightarrow{\ \iota\ }N_Q P)$
\end{enumerate}
\end{remark}

\medskip

We extract some further details of from the construction of $r$ for later use. One result we need is that the retraction $r$ preserves the property of not being $\Fcal$-isomorphic to~$P$. 

\begin{lemma}     \label{lemma: proper containment new} 
If $\varphi\colon P\rightarrow Q$ is not an isomorphism, then $Q_{f_{\varphi}}$ properly contains~$P$. 
\end{lemma}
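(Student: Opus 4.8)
The plan is to track exactly how the source group of each morphism in Diagram~\eqref{diagram: factoring diagram in Lcal} relates to $P$ and to $Q$, and to show that if $Q_{f_\varphi}$ equals $P$ then $\varphi$ must already be an isomorphism, contradicting the hypothesis. Recall that $Q_{f_\varphi} = f_\varphi(N_S(P_\varphi)\cap Q)$, where $f_\varphi$ restricts on $P_\varphi$ to an isomorphism onto $P$; in particular $Q_{f_\varphi}$ contains $P$ as the $f_\varphi$-image of $P_\varphi \subseteq N_S(P_\varphi)\cap Q$. So the containment $P \subseteq Q_{f_\varphi}$ is automatic, and what must be ruled out is equality.

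First I would translate the equality $Q_{f_\varphi}=P$ back across the isomorphism $\corestrict{f_\varphi}\colon N_S(P_\varphi)\cap Q \xrightarrow{\ \cong\ } Q_{f_\varphi}$ (which is an isomorphism of groups because $f_\varphi$ corestricts to one, cf.\ the underlined arrows in \eqref{diagram: factoring diagram in Fcal new}). Under this isomorphism $P$ corresponds to $P_\varphi$, so $Q_{f_\varphi}=P$ forces $N_S(P_\varphi)\cap Q = P_\varphi$. Next I would use that $P_\varphi$ is a \emph{proper} subgroup of $S$ whenever it is proper — but more to the point, $P_\varphi = \im(\pi(\varphi))$ is a subgroup of $Q$, and $Q$ is $\Fcal$-centric, hence in particular a discrete \pdash toral group in which proper subgroups are properly contained in their $S$-normalizers (this is the standard fact that a proper subgroup of a discrete \pdash toral group is properly contained in its normalizer, used implicitly throughout \cite{BLO-Discrete}). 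Concretely: if $P_\varphi \subsetneq Q$, then since $P_\varphi \subseteq Q \subseteq S$ we would get $P_\varphi \subsetneq N_Q(P_\varphi) \subseteq N_S(P_\varphi)\cap Q$, contradicting $N_S(P_\varphi)\cap Q = P_\varphi$. Therefore $P_\varphi = Q$.

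Finally, $P_\varphi = Q$ means $\pi(\varphi)\colon P \to Q$ is surjective; being a morphism in a fusion system it is also a monomorphism, so $\pi(\varphi)$ is an isomorphism of groups. By Lemma~\ref{lemma: corestriction}(2), every lift to $\Lcal$ of an $\Fcal$-isomorphism is an isomorphism, so $\varphi$ itself is an isomorphism in $\Lcal$ — contradicting the hypothesis that $\varphi$ is not an isomorphism. Hence $Q_{f_\varphi}$ properly contains $P$.

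The only real subtlety is making sure the two auxiliary facts are cleanly available: (i) that $\corestrict{f_\varphi}$ is a genuine group isomorphism $N_S(P_\varphi)\cap Q \xrightarrow{\cong} Q_{f_\varphi}$ carrying $P_\varphi$ to $P$, which follows from the construction of $f_\varphi$ via \cite[Lemma~2.2(b)]{BLO-LoopSpaces}; and (ii) that a proper subgroup of a discrete \pdash toral group is properly contained in its normalizer within any discrete \pdash toral overgroup, so that the chain $P_\varphi \subsetneq N_Q(P_\varphi) \subseteq N_S(P_\varphi)\cap Q$ is valid. Once those are in place the argument is a short diagram chase; I expect essentially no computation beyond bookkeeping of which groups the morphisms of \eqref{diagram: factoring diagram in Lcal} connect.
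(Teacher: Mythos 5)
Your argument is correct and uses the same two key ingredients as the paper's proof: that $N_S(P_\varphi)\cap Q=N_Q(P_\varphi)$ maps isomorphically onto $Q_{f_\varphi}$ carrying $P_\varphi$ to $P$, and that a proper subgroup of a discrete \pdash toral group is properly contained in its normalizer (\cite[Lemma~1.8]{BLO-Discrete}). The only difference is presentational — you run the argument by contradiction and justify explicitly (via Lemma~\ref{lemma: corestriction}(2)) that $P_\varphi=Q$ would force $\varphi$ to be an isomorphism, whereas the paper states the contrapositive of that step directly.
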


\begin{proof}
In the bottom row of diagram~\eqref{diagram: factoring diagram in Fcal new}, the middle entry is actually $N_Q(P_\varphi)$. If $\varphi$ is not an isomorphism, then $Q$ properly 
contains~$P_\varphi$, so $N_Q(P_\varphi)$ properly contains~$P_\varphi$ as well (\cite[Lemma~1.8]{BLO-Discrete}). It follows that in the top row, $Q_{f_\varphi}$ properly contains~$P$, as required. 
\end{proof}

We also need to know that $r$ actually is a retraction.

\begin{lemma}     \label{lemma: r properties}
If $\varphi\colon P\rightarrow Q$ is a morphism in $N_\Lcal P$, then $r(\varphi)=\varphi$, i.e., $r$ is a retraction.
\end{lemma}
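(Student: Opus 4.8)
The plan is to trace through Construction~\ref{construction: retraction to normalizer new} under the hypothesis that $\varphi\colon P\to Q$ is a morphism of $N_\Lcal P$ and check that every choice collapses to the identity. First I would unwind what the hypothesis gives us: if $\varphi$ is a morphism of $N_\Lcal P$, then by Definition~\ref{definition: linking normalizer new} we have $\pi(\varphi)(P)=P$, so $P_\varphi\definedas\im(\pi(\varphi)\colon P\to Q)=P$. This is exactly the degenerate case flagged throughout the construction, where by fiat we set $f_\varphi=\id_{N_SP}$ and $F_\varphi=\id$.

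With $P_\varphi=P$ and $f_\varphi=\id$, I would compute $Q_{f_\varphi}=\im[\,\id\colon N_S(P)\cap Q\to N_S(P)\,]=N_Q(P)$ (this is Remark~(1) following the construction), and observe that the corestriction maps $\corestrict{f_\varphi}$ and $\corestrict{\corestrict{f_\varphi}}$ are both identity maps since $f_\varphi$ is already an isomorphism onto its image in the relevant range. Correspondingly in diagram~\eqref{diagram: factoring diagram in Lcal}, since $F_\varphi=\id$, Lemma~\ref{lemma: corestriction} forces $\corestrict{F_\varphi}=\id$ and $\corestrict{\corestrict{F_\varphi}}=\id$ (uniqueness of the lift filling in a commuting square whose $\Fcal$-image is a diagram of identities). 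Plugging into the defining formula~\eqref{eq: define r}, we get
\[
r(\varphi)=\iota_P^{Q_{f_\varphi}}\circ\big(\corestrict{\corestrict{F_\varphi}}\circ\corestrict{\varphi}\big)=\iota_P^{N_QP}\circ\corestrict{\varphi}.
\]
Now I would invoke Lemma~\ref{lemma: corestriction}(1): $\corestrict{\varphi}$ is the unique corestriction of $\varphi$ to $P_\varphi=P$, characterized by $\varphi=\iota_{P_\varphi}^Q\circ\corestrict{\varphi}=\iota_P^Q\circ\corestrict{\varphi}$. Since $\iota_P^Q$ factors through $\iota_P^{N_QP}$ via $\iota_P^Q=\iota_{N_QP}^Q\circ\iota_P^{N_QP}$ by Lemma~\ref{lemma: distinguished inclusions}, and since the corestriction of $\varphi$ (viewed as a morphism into $N_Q P$, which is legitimate because $P_\varphi=P\subgroupeq N_QP$) is likewise unique, we conclude $r(\varphi)=\iota_P^{N_QP}\circ\corestrict{\varphi}$ equals $\varphi$ as an object of the undercategory — more precisely, $r(\varphi)$ and $\varphi$ are the same object of $\Normalizer{\Lcal}$, namely the morphism $P\to N_QP$ whose postcomposition with $\iota_{N_QP}^Q$ is $\varphi$, so they represent the same element after including into $P\downto\Lcal$. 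Hence $r$ fixes every object of $N_\Lcal P$, which is the assertion that $r$ is a retraction.

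The only real subtlety — and the step I would be most careful with — is the bookkeeping around what ``$r(\varphi)=\varphi$'' means, since the target of $r$ is $\Normalizer{\Lcal}$, whose objects are morphisms \emph{out of $P$ into subgroups of $N_SP$}, whereas a general morphism of $N_\Lcal P$ in the sense of Definition~\ref{definition: linking normalizer new} lives in $\Hom_\Lcal(Q\cdot P,R\cdot P)$. But here $\varphi$ is specifically an \emph{object} of $N_\Lcal P$ regarded as supplying a morphism $P\to Q$ in the undercategory sense, i.e.\ $P\subgroupeq Q\subgroupeq N_SP$ already (objects of $N_\Lcal P$ are the $N_\Fcal P$-centric subgroups of $N_SP$), so $Q\cdot P=Q$ and no extension issue arises; the statement is genuinely the clean one. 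I would state this compatibility explicitly at the start of the proof to avoid confusion, then let the computation above finish it off. No serious obstacle is expected — this lemma is essentially a consistency check that the ad hoc identity-map choices in the construction do the right thing on the subcategory being retracted onto.
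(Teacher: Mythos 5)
Your proposal is correct and follows essentially the same route as the paper: specialize Construction~\ref{construction: retraction to normalizer new} to the case $P_\varphi=P$, where $f_\varphi$ and $F_\varphi$ are the identity by fiat, and read off $r(\varphi)$ from~\eqref{eq: define r}. The only sharpening worth making is at the last step: since $\varphi$ is a morphism of $N_\Lcal P$ you have $Q\subgroupeq N_S P$ (the paper cites Lemma~\ref{lemma: P in Q}), so $Q_{f_\varphi}=N_S(P)\cap Q=N_Q P=Q$ on the nose, and hence $r(\varphi)=\iota_P^{Q}\circ\corestrict{\varphi}=\varphi$ is a literal equality of morphisms rather than merely an identification of objects in the undercategory.
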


\begin{proof}
By Lemma~\ref{lemma: P in Q}, we know that $P\subgroupeq Q\subgroupeq N_S P$ and $\pi(\varphi)(P)=P$. Hence we have $P_\varphi=P$ and $N_S(P_\varphi)\cap Q=Q$, since $Q\subgroupeq N_S P$. Because  $F_\varphi$ is the identity, we find $Q_{f_\varphi}=Q$ and 
$\iota_P^{Q_{f_{\varphi}}}\circ 
\left({\corestrict{\corestrict{F_\varphi}}}
     \circ {\corestrict{\varphi}}\right)
     =\iota_P^Q\circ {\corestrict{\varphi}}=\varphi$.
\end{proof}

Lastly, we must still establish that $r$ is actually a functor, despite the choices that were made during its construction. 

\begin{lemma}     \label{lemma: r is retraction functor}
~\begin{enumerate}
\item The retraction $r$ is a functor from $P\downto\Lcal$ to $\Normalizer{\Lcal}$. 
\item 
Let $i: (\Normalizer{\Lcal})\to (P\downarrow \Lcal)$  denote the inclusion functor. Then there is a natural transformation $\eta$ from $i\circ r$ to the identity on $P\downarrow \Lcal$.
\end{enumerate}
\end{lemma}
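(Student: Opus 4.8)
The statement to prove is Lemma~\ref{lemma: r is retraction functor}: that the assignment $\varphi\mapsto r(\varphi)$ is a functor $P\downto\Lcal\to\Normalizer{\Lcal}$, and that $\eta$ is a natural transformation from $i\circ r$ to $\id$. The key point is that $r$ was defined by making choices (of $f_\varphi$, hence $F_\varphi$), so the content of the lemma is that those choices do not obstruct functoriality. The plan is to handle (2) first, because proving that $\eta$ is a natural transformation is essentially equivalent to defining $r$ on morphisms and verifying the functor axioms, and once (2) is in hand (1) follows with little extra work.

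\textbf{Step 1: Define $r$ on morphisms.} Given a morphism in $P\downto\Lcal$ from $\varphi\colon P\to Q$ to $\psi\colon P\to Q'$, i.e.\ a morphism $g\in\Hom_\Lcal(Q,Q')$ with $g\circ\varphi=\psi$, I want to produce a morphism $r(g)$ from $r(\varphi)$ to $r(\psi)$ in $\Normalizer{\Lcal}$. The construction should be: restrict/corestrict $g$ appropriately. Concretely, $\pi(g)$ carries $P_\varphi$ into $P_\psi$ (since $\pi(g)\pi(\varphi)=\pi(\psi)$ and images are respected), hence $g$ restricts to a morphism between the relevant ``$N(\,\cdot\,)\cap Q$'' groups, and then one transports along the isomorphisms $\corestrict{F_\varphi}$, $\corestrict{F_\psi}$ (equivalently, conjugates by $F_\psi g F_\varphi^{-1}$ after corestriction) to land in $\Normalizer{\Lcal}$. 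The target of this morphism should then be checked to be exactly $r(\psi)$, using Lemma~\ref{lemma: corestriction}(1) (uniqueness of corestrictions) and Lemma~\ref{lemma: factoring lemma} (unique completion of lifts) to pin down that the various lifts assemble correctly. This is where the bulk of the diagram-chasing lives: one must verify commutativity of the cube/prism obtained by stacking diagram~\eqref{diagram: factoring diagram in Lcal} for $\varphi$ on top of the one for $\psi$, connected by $g$ and $r(g)$.

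\textbf{Step 2: Functoriality.} Check $r(\id_\varphi)=\id_{r(\varphi)}$ and $r(h\circ g)=r(h)\circ r(g)$. Both should reduce to uniqueness statements: Lemma~\ref{lemma: corestriction}(1) says corestrictions are unique, and Lemma~\ref{lemma: factoring lemma} says the completion of a partial lift is unique, so any two candidate values of $r$ on a composite that both make the relevant triangle in $\Lcal$ commute must agree. The identity case is immediate from the definition; the composition case follows because $r(h)\circ r(g)$ is built from $\pi(h)\pi(g)=\pi(hg)$ after corestriction and transport, and transporting twice (by $F_\psi g F_\varphi^{-1}$ then $F_{\psi'} h F_\psi^{-1}$) composes to transporting once by $F_{\psi'}(hg)F_\varphi^{-1}$, so both sides satisfy the same defining equation and Lemma~\ref{lemma: corestriction}(1) forces equality. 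It is worth remarking explicitly that $r$ is \emph{well-defined independent of the choice of $f_\varphi$ and $F_\varphi$}: a different choice $f_\varphi'$ differs from $f_\varphi$ by an element of $\Aut_\Fcal(P)$ restricted appropriately, so the two resulting objects $r(\varphi)$, $r'(\varphi)$ are related by a canonical isomorphism in $\Normalizer{\Lcal}$; but since the lemma only claims $r$ is \emph{a} functor (the choices being fixed once and for all at the start of Construction~\ref{construction: retraction to normalizer new}), strictly speaking one only needs functoriality for the fixed choices, and I will phrase it that way to avoid unnecessary work.

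\textbf{Step 3: Naturality of $\eta$.} Recall $\eta(\varphi)=\iota_{N_Q(P_\varphi)}^{Q_{f_\varphi}}\circ\corestrict{F_\varphi}^{-1}\colon r(\varphi)\to\varphi$ is already known (from the end of Construction~\ref{construction: retraction to normalizer new}) to be a morphism in $P\downto\Lcal$. Naturality means that for a morphism $g\colon\varphi\to\psi$ in $P\downto\Lcal$, the square with sides $\eta(\varphi)$, $\eta(\psi)$, $i(r(g))$, $g$ commutes in $\Lcal$, i.e.\ $g\circ\eta(\varphi)=\eta(\psi)\circ i(r(g))$. This is precisely the commutativity of the prism assembled in Step~1 connecting the two copies of diagram~\eqref{diagram: factoring diagram in Lcal}, read along one face; so it comes for free once Step~1 is done carefully. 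Finally (1) follows: the image of $r$ lies in $\Normalizer{\Lcal}$ by construction (\eqref{eq: define r} and the fact that $Q_{f_\varphi}\subseteq N_SP$ with $P$ normal, plus Lemma~\ref{lemma: proper containment new} is not even needed here), $r$ respects identities and composition by Step~2, so $r$ is a functor $P\downto\Lcal\to\Normalizer{\Lcal}$.

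\textbf{Main obstacle.} The hard part is Step~1: organizing the diagram chase so that the morphism $r(g)$ is unambiguously defined and genuinely lands on $r(\psi)$, since $r(\varphi)$ and $r(\psi)$ were built via \emph{different} auxiliary isomorphisms $F_\varphi$ and $F_\psi$ with no a priori compatibility between them. The resolution is to avoid ever comparing $F_\varphi$ and $F_\psi$ directly: instead define $r(g)$ by the universal property — it is the unique morphism in $\Lcal$ fitting into the appropriate commuting square with $\eta(\varphi)$, $\eta(\psi)$, and $g$ (existence and uniqueness guaranteed by Lemmas~\ref{lemma: factoring lemma} and~\ref{lemma: corestriction}), and then separately check it is a morphism in $\Normalizer{\Lcal}$ by applying $\pi$ and using that $\pi(g)$ sends $P_\varphi$ to $P_\psi$. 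With that reformulation, functoriality and naturality become formal consequences of uniqueness, and the choices made in Construction~\ref{construction: retraction to normalizer new} never need to be revisited.
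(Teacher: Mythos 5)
Your proposal is correct and follows essentially the same route as the paper: define the induced morphism $\hat\beta$ by corestricting/transporting along $\corestrict{F_\varphi}$ and $\corestrict{F_{\varphi'}}$, use the fact that morphisms of $\Lcal$ are categorical monomorphisms (equivalently, the uniqueness in Lemmas~\ref{lemma: factoring lemma} and~\ref{lemma: corestriction}) to pin it down and to verify that the triangle under $P$ commutes, and observe that the resulting ladder is exactly the naturality square for~$\eta$. Your reformulation of $\hat\beta$ via the universal property with respect to $\eta(\varphi)$ and $\eta(\varphi')$ is a harmless repackaging of the paper's explicit construction plus monomorphism argument.
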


\begin{proof}
To prove that $r$ is a functor, we must establish that, despite having to make choices in defining $r(\varphi)$ for each object $\varphi$ of $P\downto\Lcal$, we can choose compatible morphisms in $\Normalizer{\Lcal}$ between the objects $P\rightarrow Q_{f_\varphi}$ for different~$\varphi$, making $r$ into a functor. 

Our strategy is to prove that a morphism $\beta$ in $P\downto\Lcal$ (which is a commuting diagram in~$\Lcal$ under~$P$ as on the left), together with choices of $f_\varphi$ and $f_{\varphi'}$, gives rise to a unique $\hat{\beta}$ in $N_\Lcal P$ making a commutative ladder in~$\Lcal$ (on the right):
\begin{equation}     \label{diagram: required diagram}
\begin{gathered}
\xymatrix{
P \ar[r]^{\varphi}
  \ar[d]_{=}
& Q\ar[d]_{\beta}\\
P \ar[r]_{\varphi'}
  &Q'
}
\end{gathered}
\qquad
\Rightarrow
\qquad
\begin{gathered}
\xymatrix{
P \ar[r]^{r(\varphi)}
  \ar[d]^-{=}
  & Q_{f_\varphi} 
  \ar[r]^-{\eta(\varphi)}
    \ar@{-->}[d]^-{\exists ! \hat{\beta}}
  & Q\ar[d]^-{\beta}\\
P  \ar[r]_{r(\varphi')}
  &Q'_{f_{\varphi'}}
  \ar[r]_{\eta(\varphi')}
& Q'. 
}
\end{gathered}
\end{equation}
While $\hat{\beta}$ depends not only on $\beta$ but also on the choices of $f_{\varphi}$ and $f_{\varphi'}$
(and their lifts to the linking system), we omit that dependence from the notation.

Our setup allows us to construct the commuting diagram 
\eqref{diagram: define betas new} in~$\Lcal$ below, working from right to left. (The notation is as in diagram~\eqref{diagram: factoring diagram in Lcal}.) 
Once again we denote the preferred ``inclusions" in $\Lcal$ simply by~$\iota$ to reduce clutter. 
We are given $\beta$ and the commutativity of the outermost rectangle by hypothesis. 
The composites $P\rightarrow Q_{f_{\varphi}}$ and $Q_{f_{\varphi}}\rightarrow Q$ across the top row are $r(\varphi)$ and $\eta(\varphi)$, respectively, and similarly are $r(\varphi')$ and $\eta(\varphi')$ for the second row. 
\begin{equation}     \label{diagram: define betas new}
\begin{gathered}
\xymatrix{
P \ar[r]_{{\corestrict{\varphi}}}
  \ar[d]_-{=}
     \ar@/_-1.3pc/[rrrrr]^-{\varphi} 
  & \Pimage{\varphi}
     \ar[r]_-{\corestrict{\corestrict{F_\varphi}}} 
  & P 
     \ar[r]_-{\iota}
  & Q_{f_{\varphi}}
     \ar[r]^-{({\corestrict{F_{\varphi}}})^{-1}}_-{\cong}
     \ar[d]^{\hat{\beta}}
  & N_S(P_\varphi)\cap Q 
      \ar[d]_{\corestrict{\beta}}
      \ar[r]_-{\iota}
  & Q
  \ar[d]^-{\beta}
\\
P  \ar[r]^{{\corestrict{\varphi'}}}
  \ar@/_1.4pc/[rrrrr]_-{\varphi'} 
  & \Pimage{\varphi'}
    \ar[r]^-{\corestrict{\corestrict{F_{\varphi'}}}}
  & P
     \ar[r]^-{\iota}
  & \ Q_{f_{\varphi'}}\ 
      \ar[r]_-{\ ({\corestrict{F_{\varphi'}}})^{-1}\ }^-{\cong}
  & N_S(P_{\varphi'})\cap Q'  
      \ar[r]^-{\iota}
  &Q'
}
\end{gathered}
\end{equation}
First, $\beta\circ\iota$ corestricts uniquely to $\corestrict{\beta}$ so that the rightmost square commutes. The map $\hat{\beta}$ is then uniquely defined by requiring commutativity of the next square to the left. 

To test if the left rectangle commutes, we use the result that every morphism in $\Lcal$ is a categorical monomorphism (\cite[Prop.~A.2(d), Cor.~A.5]{BLO-LoopSpaces}). It is sufficient to check that composing both ways around the left square with the composite 
$\iota\circ {\corestrict{F_{\varphi'}}}
     \colon Q_{f_{\varphi'}}\rightarrow Q'$
are the same. The two compositions are the same by commutativity of the other two squares and the outer rectangle. 

Lastly, uniqueness of $\hat{\beta}$ guarantees functoriality of~$r$ despite the choices made in the construction. 

For (2), observe that \eqref{diagram: required diagram}, thought of as a diagram in $P\downarrow \Lcal$, is exactly the diagram required to show that $\eta$ is a natural transformation from $i\circ r$ to the identity.
\end{proof}

We can now put together the proof of Proposition~\ref{proposition: deformation retraction new}, whose statement we reproduce for convenience.

\begin{RetractionProposition}
\RetractionPropositionText
\end{RetractionProposition}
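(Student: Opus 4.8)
The plan is to assemble the pieces already in hand: the functor $r\colon P\downto\Lcal\to\Normalizer{\Lcal}$ and the natural transformation $\eta$ from $i\circ r$ to the identity of $P\downto\Lcal$, both from Lemma~\ref{lemma: r is retraction functor}, together with the fixed-point statement of Lemma~\ref{lemma: r properties}. The only substantive point is that $r$ and $\eta$ restrict to the full subcategories of non-isomorphisms; once that is settled, the proposition is the standard fact that a categorical deformation retraction realizes to a homotopy equivalence of nerves.

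First I would check that $r$ carries $P\downtoNoniso\Lcal$ into $\NormalizerPrime{\Lcal}$. For an object $\varphi\colon P\to Q$ of $P\downtoNoniso\Lcal$, formula~\eqref{eq: define r} exhibits $r(\varphi)$ as $\iota_P^{Q_{f_\varphi}}$ precomposed with an isomorphism of $\Lcal$ with source and target $P$; hence $\pi(r(\varphi))$ is the subgroup inclusion $P\subgroupeq Q_{f_\varphi}$ precomposed with an automorphism of $P$, which is an isomorphism of groups exactly when $P=Q_{f_\varphi}$. By Lemma~\ref{lemma: proper containment new} we have $P\subgroupneq Q_{f_\varphi}$ whenever $\varphi$ is not an isomorphism, so $r(\varphi)$ is not an isomorphism of $\Lcal$, i.e.\ it is an object of $\NormalizerPrime{\Lcal}$. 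Because $P\downtoNoniso\Lcal$ and $\NormalizerPrime{\Lcal}$ are \emph{full} subcategories of $P\downto\Lcal$ and $\Normalizer{\Lcal}$, the functor $r$ then restricts to a functor $r\colon P\downtoNoniso\Lcal\to\NormalizerPrime{\Lcal}$, and $\eta$ restricts to a natural transformation from $i\circ r$ to $\id$ on $P\downtoNoniso\Lcal$, where now $i\colon\NormalizerPrime{\Lcal}\hookrightarrow P\downtoNoniso\Lcal$ denotes the inclusion; indeed each component $\eta(\varphi)$ is a morphism of $P\downto\Lcal$ connecting the objects $r(\varphi)$ and $\varphi$, both of which lie in the full subcategory $P\downtoNoniso\Lcal$.

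Next I would verify that $r$ is actually a retraction, i.e.\ $r\circ i=\id_{\NormalizerPrime{\Lcal}}$. An object of $\NormalizerPrime{\Lcal}$ is in particular a morphism of $N_\Lcal P$, so Lemma~\ref{lemma: r properties} gives $r(\varphi)=\varphi$ on objects; moreover, for such $\varphi$ one has $\Pimage{\varphi}=P$, $Q_{f_\varphi}=N_Q(\Pimage{\varphi})=Q$, and $F_\varphi=\id$, so the component $\eta(\varphi)=\iota_{N_Q(\Pimage{\varphi})}^{Q_{f_\varphi}}\circ{\corestrict{F_\varphi}}^{-1}$ is the identity morphism of $Q$. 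Feeding $\eta(\varphi)=\id_Q$ and $\eta(\varphi')=\id_{Q'}$ into the defining ladder~\eqref{diagram: define betas new} of $\hat\beta$ forces $r(\beta)=\hat\beta=\beta$ for every morphism $\beta$ of $\NormalizerPrime{\Lcal}$. Thus $r\circ i=\id$, and at the same time $\eta|_{\NormalizerPrime{\Lcal}}$ is the identity natural transformation.

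Finally I would pass to nerves. Realizing the natural transformation $\eta$ produces a homotopy $\realize{i}\circ\realize{r}\simeq\id_{\realize{P\downtoNoniso\Lcal}}$, while $r\circ i=\id$ gives $\realize{r}\circ\realize{i}=\id_{\realize{\NormalizerPrime{\Lcal}}}$; hence $\realize{r}$ and $\realize{i}$ are mutually inverse homotopy equivalences, which is the claim. The single step that is not pure formalism is the assertion that $r$ takes non-isomorphisms to non-isomorphisms, and that is precisely why Lemma~\ref{lemma: proper containment new} was isolated earlier; the remainder is bookkeeping about full subcategories together with the routine principle that a natural transformation of functors induces a homotopy of realizations.
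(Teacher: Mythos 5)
Your proposal is correct and follows essentially the same route as the paper: it combines the retraction functor $r$ and the natural transformation $\eta$ of Lemma~\ref{lemma: r is retraction functor} with the fixed-point statement of Lemma~\ref{lemma: r properties}, and uses Lemma~\ref{lemma: proper containment new} to see that $r$ and $\eta$ restrict to the full subcategories of non-isomorphisms. The extra details you supply (the explicit check that $r(\varphi)$ is a non-isomorphism, and that $\hat\beta=\beta$ on morphisms of $\NormalizerPrime{\Lcal}$) are accurate elaborations of what the paper leaves implicit.
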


\begin{proof}
The function 
$r\colon (P\downto\Lcal)\rightarrow (\Normalizer{\Lcal})$ of Construction~\ref{construction: retraction to normalizer new} is a retraction functor 
by Lemmas \ref{lemma: r properties} and~\ref{lemma: r is retraction functor}, and it preserves non-isomorphisms by Lemma~\ref{lemma: proper containment new}. The natural transformation $\eta$ from Lemma~\ref{lemma: r is retraction functor} restricts to a natural transformation from $i\circ r$ to the identity on $P\downtoNoniso\Lcal$.
Hence $r$ induces a homotopy equivalence between nerves. 
\end{proof}

\section{The subgroup $\extension$}
\label{section: Phat}

The overall strategy for proving Theorem~\ref{theorem: reduce linking to centric radical} is to study
a sequence of \undercategories\ in order to apply Quillen's Theorem~A. 
In Section~\ref{section: normalizer fusion subsystems}, we established a homotopy equivalence between the nerves of $P\downtoNoniso\Lcal$ and $\NormalizerPrime{\Lcal}$. 
In this section, we construct a supergroup $\extension$ of~$P$ inside of~$N_SP$, and a functor from $\NormalizerPrime{\Lcal}$ to $\extension\downto N_\Lcal P$. Our goal is the following proposition. 

\begin{proposition}     \label{proposition: inflate to Phat}
If $P$ is fully normalized, $\Fcal$-centric, and not $\Fcal$-radical, then 
$\realize{\NormalizerPrime{\Lcal}}
\simeq
\realize{\extension\downto N_\Lcal P}$. 
\end{proposition}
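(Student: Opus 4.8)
The plan is to show that \emph{both} nerves are contractible, by comparing the two categories through a pair of functors, in the style of Section~\ref{section: normalizer fusion subsystems}. First I would record the consequences of the construction of $\extension$: namely that $\extension$ is a subgroup of $N_S P$ with $P\subgroupneq\extension$ — this is where the hypothesis that $P$ is not $\Fcal$-radical enters — that $\extension$ is normal in $N_S P$, and that every automorphism in $\Aut_\Fcal(P)$ extends to an automorphism of $\extension$ in $\Fcal$. Since $P$ is $\Fcal$-centric we have $C_{N_S P}(P)=Z(P)\subgroupeq P$, and since every morphism of $N_\Fcal P$ carries $P$ to $P$ (Lemma~\ref{lemma: P in Q}), every subgroup of $N_S P$ containing $P$ is $N_\Fcal P$-centric; in particular $\extension$ is an object of $N_\Lcal P$, and so is $Q\cdot\extension$ for every object $Q$ of $N_\Lcal P$ receiving a morphism from $P$ (such a $Q$ satisfies $P\subgroupeq Q\subgroupeq N_S P$ by Lemma~\ref{lemma: P in Q}, so $Q\cdot\extension$ is a subgroup of $N_S P$, $\extension$ being normal there). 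Consequently $\extension\downto N_\Lcal P$ has the initial object $\id_\extension$ and hence has contractible nerve, so it will suffice to prove that $\realize{\NormalizerPrime{\Lcal}}$ is contractible.

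Next I would introduce two functors. The first, $j\colon(\extension\downto N_\Lcal P)\to\NormalizerPrime{\Lcal}$, is precomposition with the distinguished inclusion $\iota_P^{\extension}$ (a morphism of $N_\Lcal P$): it sends $\bigl(\extension\xrightarrow{\psi}R\bigr)$ to $\bigl(P\xrightarrow{\psi\circ\iota_P^{\extension}}R\bigr)$, which is a non-isomorphism because $P\subgroupneq\extension\subgroupeq R$. The second — the main construction — is a functor $s\colon\NormalizerPrime{\Lcal}\to(\extension\downto N_\Lcal P)$ inflating from $P$ to $\extension$, built following the template of Construction~\ref{construction: retraction to normalizer new} and Lemma~\ref{lemma: r is retraction functor}. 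For an object $\varphi\colon P\to Q$ of $\NormalizerPrime{\Lcal}$, Lemmas~\ref{lemma: P in Q} and~\ref{lemma: corestriction} give $P\triangleleft Q\subgroupeq N_S P$ and a factorization $\varphi=\iota_P^{Q}\circ\corestrict\varphi$ with $\corestrict\varphi\in\Aut_\Lcal(P)$; I would choose an extension $\widehat{\corestrict\varphi}\in\Aut_\Lcal(\extension)$ of $\corestrict\varphi$ (extend $\pi(\corestrict\varphi)$ over $\extension$ using the defining property of $\extension$, then lift to $\Lcal$ and adjust by an element of $Z(\extension)$, using Lemmas~\ref{lemma: factoring lemma}--\ref{lemma: corestriction}, so that its restriction to $P$ equals $\corestrict\varphi$) and set
\[
s(\varphi)\definedas\Bigl(\extension\xrightarrow{\ \iota_{\extension}^{Q\cdot\extension}\circ\widehat{\corestrict\varphi}\ }Q\cdot\extension\Bigr),
\]
an object of $\extension\downto N_\Lcal P$. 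For a morphism $\beta\colon\varphi\to\varphi'$ of $\NormalizerPrime{\Lcal}$, that is, a morphism $\beta\colon Q\to Q'$ of $N_\Lcal P$ with $\beta\circ\varphi=\varphi'$, I would define $s(\beta)$ to be the unique morphism $Q\cdot\extension\to Q'\cdot\extension$ of $N_\Lcal P$ satisfying $s(\beta)\circ\iota_{Q}^{Q\cdot\extension}=\iota_{Q'}^{Q'\cdot\extension}\circ\beta$ and $s(\beta)\circ s(\varphi)=s(\varphi')$. Exactly as in Lemma~\ref{lemma: r is retraction functor}, uniqueness of such a morphism — which holds because every morphism of $\Lcal$ is a categorical monomorphism (\cite[Prop.~A.2, Cor.~A.5]{BLO-LoopSpaces}) and $Q\cdot\extension$ is generated by $Q$ and $\extension$ — is what makes the assignment on morphisms well defined and functorial, in spite of the choices made for each object.

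Then I would observe that the distinguished inclusions $\iota_{Q}^{Q\cdot\extension}\colon Q\to Q\cdot\extension$ assemble into a natural transformation $\id_{\NormalizerPrime{\Lcal}}\Rightarrow j\circ s$: on $\varphi\colon P\to Q$ one computes $\iota_{Q}^{Q\cdot\extension}\circ\varphi=\iota_{P}^{Q\cdot\extension}\circ\corestrict\varphi=\iota_{\extension}^{Q\cdot\extension}\circ\widehat{\corestrict\varphi}\circ\iota_{P}^{\extension}=(j\circ s)(\varphi)$, using Lemma~\ref{lemma: distinguished inclusions} and the fact that $\widehat{\corestrict\varphi}$ restricts to $\corestrict\varphi$ on $P$, while naturality in $\varphi$ is precisely the identity $s(\beta)\circ\iota_{Q}^{Q\cdot\extension}=\iota_{Q'}^{Q'\cdot\extension}\circ\beta$ built into the definition of $s$. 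This gives $\realize{j}\circ\realize{s}\simeq\id$, and since $\realize{j}\circ\realize{s}$ factors through the contractible space $\realize{\extension\downto N_\Lcal P}$, the identity map of $\realize{\NormalizerPrime{\Lcal}}$ is null-homotopic; hence $\realize{\NormalizerPrime{\Lcal}}$ is contractible, and therefore $\realize{\NormalizerPrime{\Lcal}}\simeq\realize{\extension\downto N_\Lcal P}$. (Alternatively, once $j$, $s$, and the natural transformation are in place, one can instead invoke Quillen's Theorem~A.)

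The step I expect to be the main obstacle is the construction of $s$ as an honest functor. Checking that $Q\cdot\extension$ is a legitimate object of $N_\Lcal P$ and that $s(\varphi)$ is a morphism of it is routine given the first paragraph, but the substantive point is to show that for each morphism $\beta$ of $\NormalizerPrime{\Lcal}$ the morphism $s(\beta)$ prescribed above really exists — equivalently, that the maps it is required to restrict to on $Q$ and on $\extension$ are compatible on $Q\cap\extension$ and with respect to the conjugation action of $Q$ on $\extension$ — and that the per-object choices of $\widehat{\corestrict\varphi}$ can be normalized so that this compatibility holds and so that $\widehat{\corestrict\varphi}$ restricts to $\corestrict\varphi$ on $P$. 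This is the analogue for $\extension$ of the coherence argument carried out for the normalizer retraction in Construction~\ref{construction: retraction to normalizer new} and Lemma~\ref{lemma: r is retraction functor}, and it is where the argument of \cite{BCGLO} must be adapted to the present setting; the non-finiteness of $\Lcal$ itself is not an obstacle, since the lifting and corestriction lemmas of Section~\ref{section: background} already hold for discrete \pdash toral groups.
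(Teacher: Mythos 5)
Your proposal is correct and is essentially the paper's own argument: your $j$ is the paper's precomposition functor $G$, your inflation $s$ coincides (by uniqueness of extensions along the categorical epimorphism $\iota_P^{\extension}$) with the paper's $F$, whose key input is exactly the extension statement you sketch --- in the paper this is Lemma~\ref{lemma: extension to Phat}, proved by extending $\pi(\varphi)$ over $\extension\cdot Q\subgroupeq N_\alpha$ via axiom (II) of saturation, lifting to $\Lcal$, and correcting by a central element using axioms (A) and (C) --- and the distinguished inclusions $\iota_Q^{\extension\cdot Q}$ give the same natural transformation $\id\Rightarrow j\circ s$; your endgame (contractibility of $\realize{\extension\downto N_\Lcal P}$ via the initial object $\id_{\extension}$, hence nullhomotopy of the identity of $\realize{\NormalizerPrime{\Lcal}}$) is a valid substitute for the paper's second natural transformation $\Id\Rightarrow FG$. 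Two small corrections to your sketch: the correcting element lies in $Z(P)$ (resp.\ $Z(Q)$ for a general morphism $\beta$), since that is the indeterminacy of lifts out of the domain by axiom (A), and it can be pushed into $\delta_{\extension\cdot Q}$ because the $\Fcal$-centric domain contains its center --- not in $Z(\extension)$; and the uniqueness of $s(\beta)$ comes from $\iota_Q^{\extension\cdot Q}$ being a categorical \emph{epi}morphism in $\Lcal$, not from morphisms being monomorphisms.
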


The first part of the section leads up to the definition of $\extension$ and its elementary properties (equation~\eqref{eq: definition of Phat} and Lemma~\ref{lemma: prop of Pwiggle}). The second part of the section sets up and proves the key extension property of~$\extension$ (Lemma~\ref{lemma: extension to Phat}), and the section concludes with the proof of Proposition~\ref{proposition: inflate to Phat}. 
We begin with an observation about conjugation by images of an extension of a morphism. 

\begin{lemma}      \label{lemma: conjugation}
Let $\alpha\in\Aut_\Fcal(P)$, and assume that $\alpha$ extends to $\alphawiggle\colon Q\rightarrow S$ in~$\Fcal$, 
where $P\subgroupeq Q\subgroupeq N_S P$ and $\restr{\alphawiggle}{P}=\alpha$. If $y\in Q$, then as automorphisms of~$P$,
\[
\alpha\circ c_y\circ \alpha^{-1} = c_{\alphawiggle(y)}.
\]
\end{lemma}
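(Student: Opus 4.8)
The plan is to exploit the fact that morphisms in a fusion system are genuine group homomorphisms, so the desired identity $\alpha \circ c_y \circ \alpha^{-1} = c_{\alphawiggle(y)}$ can be verified by a direct element-by-element computation, once we have checked that all the elements involved stay inside the subgroups where $\alphawiggle$ restricts to $\alpha$.

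First I would record the bookkeeping that makes both sides well-defined automorphisms of $P$. Since $y \in Q \subgroupeq N_S P$, the element $y$ normalizes~$P$, so $c_y$ restricts to an automorphism of~$P$; and since $\alpha = \restr{\alphawiggle}{P}$ lies in $\Aut_\Fcal(P)$, we have $\alphawiggle(P) = \alpha(P) = P$. Because $\alphawiggle$ is a homomorphism on $Q$ and $P \triangleleft Q$ (as $Q \subgroupeq N_S P$), applying $\alphawiggle$ gives $P = \alphawiggle(P) \triangleleft \alphawiggle(Q)$, so $\alphawiggle(y)$ normalizes~$P$ and $c_{\alphawiggle(y)}$ is likewise an automorphism of~$P$.

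Then the computation: fix $x \in P$ and put $z = \alpha^{-1}(x) \in P$; note $yzy^{-1} \in P$ because $y$ normalizes~$P$. Now use $\restr{\alphawiggle}{P} = \alpha$ on the element $yzy^{-1} \in P$, then the homomorphism property of $\alphawiggle$ on~$Q$, then $\restr{\alphawiggle}{P} = \alpha$ again on $z \in P$:
\[
\alpha(yzy^{-1}) = \alphawiggle(yzy^{-1}) = \alphawiggle(y)\,\alphawiggle(z)\,\alphawiggle(y)^{-1} = \alphawiggle(y)\,\alpha(z)\,\alphawiggle(y)^{-1} = \alphawiggle(y)\,x\,\alphawiggle(y)^{-1}.
\]
The left-hand side is $(\alpha \circ c_y \circ \alpha^{-1})(x)$ and the right-hand side is $c_{\alphawiggle(y)}(x)$, so the two automorphisms agree. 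There is essentially no obstacle here; the only point needing a moment's care is tracking which elements lie in~$P$ so that $\restr{\alphawiggle}{P} = \alpha$ may be invoked — in particular that $yzy^{-1} \in P$, which is exactly where the hypothesis $Q \subgroupeq N_S P$ enters.
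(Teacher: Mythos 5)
Your computation is correct and is essentially identical to the paper's own proof: both evaluate $\alpha\circ c_y\circ\alpha^{-1}$ on an element of $P$, replace $\alpha$ by $\alphawiggle$ on the conjugated element (which lies in $P$ since $y$ normalizes~$P$), and then use the homomorphism property of~$\alphawiggle$. Your version just spells out the bookkeeping a bit more explicitly.
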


\begin{proof}
Remembering that $\alpha$ itself cannot be applied to~$y$, we compute
\begin{align*}
\alpha\circ c_y\circ \alpha^{-1}(p)
   &= \alpha\left(y\cdot (\alpha^{-1}p)\cdot y^{-1}\right)\\
   &= \alphawiggle\left(y\cdot (\alpha^{-1}p)\cdot y^{-1}\right)\\
   &= \alphawiggle(y) \cdot p \cdot \alphawiggle(y^{-1}). \qedhere
\end{align*}
\end{proof}

Motivated by Lemma~\ref{lemma: conjugation}, one makes the following definition. 

\begin{definition}   \label{definition: Nalpha}
Let $\alpha\in\Aut_{\Fcal}(P)$. We define 
\[
N_\alpha
\definedas \{
y\in N_S P \suchthat 
\alpha\circ c_y\circ \alpha^{-1}\in\Aut_S(P)
\}.
\]
\end{definition}

As a corollary of Lemma~\ref{lemma: conjugation} and Definition~\ref{definition: Nalpha}, we find that $N_\alpha$ is the largest subgroup of $N_S P$ over which $\alpha$ could possibly extend. 

\begin{corollary}   \label{corollary: contained in N}
If $\alpha\in\Aut_{\Fcal}(P)$ extends 
to $\alphawiggle\colon Q\rightarrow S$ where 
$P\subgroupeq Q\subgroupeq N_S P$, then $Q\subgroupeq N_\alpha$. 
\end{corollary}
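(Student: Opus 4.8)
Corollary~\ref{corollary: contained in N} asserts that any extension $\alphawiggle\colon Q \to S$ of $\alpha \in \Aut_\Fcal(P)$, with $P \subgroupeq Q \subgroupeq N_S P$, must have $Q \subgroupeq N_\alpha$. The plan is to take an arbitrary $y \in Q$ and show directly that $\alpha \circ c_y \circ \alpha^{-1} \in \Aut_S(P)$, which is precisely the membership condition for $N_\alpha$ in Definition~\ref{definition: Nalpha}.

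First I would apply Lemma~\ref{lemma: conjugation} with this $\alphawiggle$ and this $y$: since $y \in Q$ and $\restr{\alphawiggle}{P} = \alpha$, the lemma gives the identity $\alpha \circ c_y \circ \alpha^{-1} = c_{\alphawiggle(y)}$ as automorphisms of $P$. Next I need to check that the right-hand side really is conjugation by an element of $S$ acting on $P$, i.e. that it lies in $\Aut_S(P)$; for this it suffices that $\alphawiggle(y)$ normalizes $P$. This follows because $\alphawiggle$ is a morphism of $\Fcal$ defined on $Q \supergroupeq P$ with $\alphawiggle(P) = \alpha(P) = P$ (as $\alpha$ is an automorphism of $P$), so $\alphawiggle$ carries $N_Q(P)$ into $N_S(P)$; and since $Q \subgroupeq N_S P$ we have $Q = N_Q(P)$, hence $\alphawiggle(y) \in N_S P$. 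Therefore $c_{\alphawiggle(y)} \in \Aut_S(P)$, so $\alpha \circ c_y \circ \alpha^{-1} \in \Aut_S(P)$, giving $y \in N_\alpha$. As $y \in Q$ was arbitrary, $Q \subgroupeq N_\alpha$.

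This is entirely routine; there is no real obstacle. The only point requiring a word of care is the observation that $\alphawiggle$ maps $Q$ into $N_S(P)$, which rests on the two facts $\alphawiggle(P) = P$ and $Q \subgroupeq N_S(P)$ — both immediate from the hypotheses — so that conjugating $P$ by $\alphawiggle(y)$ indeed lands back in $\Aut_S(P)$ rather than merely in $\Hom_\Fcal$ of some other subgroup.
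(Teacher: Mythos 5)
Your proposal is correct and is exactly the argument the paper intends: the corollary is stated as an immediate consequence of Lemma~\ref{lemma: conjugation} and Definition~\ref{definition: Nalpha}, namely that $\alpha\circ c_y\circ\alpha^{-1}=c_{\alphawiggle(y)}$ with $\alphawiggle(y)\in N_S P$ (since $\alphawiggle(y)P\alphawiggle(y)^{-1}=\alphawiggle(yPy^{-1})=\alphawiggle(P)=P$), so $y\in N_\alpha$. Your extra care in verifying $\alphawiggle(y)\in N_S P$ is a detail the paper leaves implicit, but it is the same proof.
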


There is also a uniqueness property for extensions of elements of $\Aut_\Fcal(P)$, as described by the following lemma. The lemma considerably strengthens what one could conclude just from Lemma~\ref{lemma: conjugation}. 

\begin{lemma}\cite[Prop.~2.8]{BLO-Discrete}
\label{lemma: uniqueness of extension}
Let $P$ be $\Fcal$-centric, and suppose $P\subgroupeq Q$. If
$\alpha\in\Aut_\Fcal(P)$, and $\alphawiggle$ and $\alphawiggle'$ are extensions of $\alpha$ to morphisms~$Q\rightarrow S$, then there exists $z\in Z(P)\subgroupeq Q$ such that $\alphawiggle'=\alphawiggle\circ c_z$. In particular, $\alphawiggle(Q)=\alphawiggle'(Q)$.
\end{lemma}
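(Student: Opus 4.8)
I would prove this by reducing to the case $\alpha=\id_P$, extracting a cocycle valued in $Z(P)$ from the discrepancy of the two extensions, and then showing that cocycle is a coboundary.

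\medskip

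\noindent\textbf{Step 1: reduction to $\alpha=\id_P$.} First I would record that, since $\restr{\alphawiggle}{P}=\alpha$ is an automorphism of $P$, we have $\alphawiggle(P)=P$, and from this $P\triangleleft Q$, so that $Q\subgroupeq N_S P$ and $\alphawiggle(Q),\alphawiggle'(Q)\subgroupeq N_S P$. Viewing $\alphawiggle$ as an isomorphism onto $R\definedas\alphawiggle(Q)$, set $\theta\definedas\alphawiggle'\circ\alphawiggle^{-1}\colon R\to S$; this is a morphism of $\Fcal$, and on $P$ it is $\alpha\circ\alpha^{-1}=\id_P$. If I can produce $z_0\in Z(P)$ with $\theta=\restr{c_{z_0}}{R}$, then $\alphawiggle'=\theta\circ\alphawiggle=c_{z_0}\circ\alphawiggle$, and by Lemma~\ref{lemma: conjugation} (applied to $\alphawiggle$ and $y$ ranging over $Z(P)\subgroupeq Q$) this equals $\alphawiggle\circ c_z$ for $z\definedas\alpha^{-1}(z_0)\in Z(P)$. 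The ``in particular'' clause is then immediate since $c_z(Q)=Q$ for $z\in Z(P)\subgroupeq Q$. So it suffices to treat the case $\alpha=\id_P$, i.e.\ to show: \emph{if $P$ is $\Fcal$-centric, $P\triangleleft R\subgroupeq S$, and $\theta\in\Hom_\Fcal(R,S)$ with $\restr{\theta}{P}=\id_P$, then $\theta=\restr{c_{z_0}}{R}$ for some $z_0\in Z(P)$.}

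\medskip

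\noindent\textbf{Step 2: the $\Fcal$-centric cocycle.} For $r\in R$ and $p\in P$, using $P\triangleleft R$ and $\restr{\theta}{P}=\id_P$ we get $\theta(r)\,p\,\theta(r)^{-1}=\theta(rpr^{-1})=rpr^{-1}$, so $z_r\definedas r^{-1}\theta(r)$ centralizes $P$; since $P$ is $\Fcal$-centric, $C_S P=Z(P)$ (Definition~\ref{definition:Fcentric-Fradical}), hence $z_r\in Z(P)$. A direct check shows $r\mapsto z_r$ is a $1$-cocycle for the conjugation action of $R$ on $Z(P)$, and it vanishes on $P$ (where $\theta$ is the identity), so it descends to a cocycle on $R/P$. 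Finding $z_0$ with $\theta=\restr{c_{z_0}}{R}$ is exactly the assertion that this cocycle is a coboundary, $z_r=r^{-1}z_0 r\,z_0^{-1}$ for all $r\in R$.

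\medskip

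\noindent\textbf{Step 3: vanishing of the obstruction (the hard part).} One cannot conclude this formally: $H^1(R/P;Z(P))$ need not vanish. Instead I would argue by induction along a filtration $P=R_0\triangleleft R_1\triangleleft\cdots$ with $R_{i+1}/R_i\cong\Z/p$ and $\bigcup_i R_i=R$, which exists because $R$ is discrete \pdash toral and hence locally finite; the countable-union axiom~(III) of saturation (Definition~\ref{definition: SaturatedFusion}) lets me pass to the union, reducing to the case $R=R_0\langle x\rangle$ with $x^p\in R_0$ and $\restr{\theta}{R_0}$ already known to equal $\restr{c_{z_0}}{R_0}$. Composing $\theta$ with $c_{z_0}^{-1}$, I may assume $\restr{\theta}{R_0}=\id_{R_0}$, so that $\theta(x)=xu$ with $u\in C_S(R_0)\subgroupeq Z(P)$. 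The relation $\theta(x)^p=\theta(x^p)=x^p$ forces $u$ into the kernel of the relevant norm map, and the extension axiom~(II) --- applied to $\theta$, after first replacing the configuration by an $\Fcal$-conjugate one in which the relevant subgroup is fully centralized --- forces $u$ to lie in the coboundary subgroup $\{\,x^{-1}vx\,v^{-1}:v\in C_S(R_0)\,\}$, which completes the induction. The main obstacle is precisely this last point: squeezing out, at each stage, the extra $Z(P)$-worth of ambiguity using only the saturation axioms rather than a (false) homological vanishing statement.
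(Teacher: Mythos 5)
The paper offers no proof of this lemma---it is imported verbatim from \cite[Prop.~2.8]{BLO-Discrete}---so the only question is whether your argument stands on its own, and it does not yet. Two concrete problems. First, in Step~1 the inference ``$\alphawiggle(P)=P$, and from this $P\triangleleft Q$'' is a non sequitur: $\alphawiggle$ is an abstract monomorphism $Q\to S$, not conjugation by a group element, so $\alphawiggle(P)=P$ gives no information about normality of $P$ in $Q$. Normality (equivalently $Q\subgroupeq N_SP$) is genuinely needed for your Step~2 computation $\theta(r)p\theta(r)^{-1}=\theta(rpr^{-1})$; it is a hypothesis of the cited result and holds in every application in this paper, but it is not something you can derive.

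Second, and more seriously, Step~3 carries the entire content of the lemma and is asserted rather than proved. You correctly identify the obstruction as a class in Tate cohomology of $\Z/p$ with coefficients in $C_S(R_0)$, which does not vanish for formal reasons; but the claim that axiom~(II) ``forces $u$ to lie in the coboundary subgroup'' is precisely the statement to be proved, and axiom~(II) only manufactures extensions to larger domains---it provides no mechanism for comparing two given extensions or trivializing a cohomology class. (The limit step is also incomplete as written: the induction yields elements $v_i\in Z(P)$ with $\restr{\theta}{R_i}=\restr{c_{v_i}}{R_i}$, not a single $z_0$, and axiom~(III) concerns membership of a homomorphism in $\Fcal$, not convergence of the $v_i$; this part is repairable because the chain $C_S(R_i)$ stabilizes in size, but that argument is missing.) The missing fusion-theoretic input is axiom~(I), not axiom~(II). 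The standard argument proceeds from your Step~2: since $\theta(r)=rz_r$ with $z_r\in Z(P)\subgroupeq R$, one gets $\theta(R)\subgroupeq R$ and hence $\theta\in\Aut_\Fcal(R)$ by comparing sizes; the set $H$ of $\Fcal$-automorphisms of $R$ restricting to $\id_P$ is then a normal subgroup of $N_{\Aut_\Fcal(R)}(P)$ whose elements all have $p$-power order (it embeds in the crossed homomorphisms $Z^1(R/P;Z(P))$). After replacing the configuration by an $\Fcal$-conjugate in which $R$ is fully normalized, the Sylow condition of axiom~(I) forces every element of $H$ to be realized as $c_s$ for some $s\in N_SR\cap N_SP$, and the condition $\restr{c_s}{P}=\id_P$ together with centricity of $P$ forces $s\in C_S(P)=Z(P)$, which is the desired conclusion. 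I recommend either supplying that Sylow argument in place of your Step~3 or simply keeping the citation.
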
 



We are interested in the largest supergroup of $P$ over which \textit{all} $\Fcal$-automorphisms of $P$ must extend. 
If $P$ is a fully normalized subgroup, let $\extension$ be defined by
\begin{equation}
\label{eq: definition of Phat}
\extension\definedas\bigcap_{\alpha\in\Aut_\Fcal(P)}N_\alpha. 
\end{equation}

\begin{lemma}   \label{lemma: prop of Pwiggle}
If $P$ is fully normalized, then $P\triangleleft\extension\triangleleft  N_S P$, and 
$C_S(P)\subgroupeq\extension$. 
\end{lemma}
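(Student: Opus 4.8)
The plan is to verify the three containments in $P\triangleleft\extension\triangleleft N_S P$ and the inclusion $C_S(P)\subgroupeq\extension$ by unwinding the definition $\extension=\bigcap_{\alpha}N_\alpha$ and checking each $N_\alpha$ individually. Since an intersection of normal subgroups of a fixed group is normal, and since a subgroup contained in every term of an intersection is contained in the intersection, it suffices to prove: (i) each $N_\alpha$ is a subgroup of $N_S P$ containing $P$; (ii) $P\triangleleft N_\alpha$; (iii) $N_\alpha\triangleleft N_S P$; and (iv) $C_S(P)\subgroupeq N_\alpha$ for every $\alpha\in\Aut_\Fcal(P)$. The normality $\extension\triangleleft N_S P$ then follows since each $N_\alpha\triangleleft N_S P$ and the intersection of normal subgroups is normal; the normality $P\triangleleft\extension$ follows from $P\subgroupeq\extension\subgroupeq N_S P$ together with $P\triangleleft N_S P$ (which holds by definition of the normalizer); and $C_S(P)\subgroupeq\extension$ follows from (iv).

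First I would check that $N_\alpha$ is a subgroup: if $y_1,y_2\in N_\alpha$, then $\alpha c_{y_1y_2}\alpha^{-1}=(\alpha c_{y_1}\alpha^{-1})(\alpha c_{y_2}\alpha^{-1})$, a product of two elements of $\Aut_S(P)$, hence in $\Aut_S(P)$ since $\Aut_S(P)$ is a subgroup of $\Aut_\Fcal(P)$; closure under inverses is similar. For (iv), if $y\in C_S(P)$ then $c_y=\id_P$, so $\alpha c_y\alpha^{-1}=\id_P=c_1\in\Aut_S(P)$, giving $y\in N_\alpha$; note $C_S(P)\subgroupeq N_S P$ automatically. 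For (ii), $P\subgroupeq N_\alpha$ because for $y\in P$ we have $c_y\in\Aut_P(P)\subgroupeq\Aut_S(P)$, and $\alpha\Aut_P(P)\alpha^{-1}=\Aut_P(P)$ (conjugation by $\alpha\in\Aut_\Fcal(P)$ permutes the inner automorphisms, sending $c_y$ to $c_{\alpha(y)}$), so $\alpha c_y\alpha^{-1}=c_{\alpha(y)}\in\Aut_P(P)\subgroupeq\Aut_S(P)$; and $P\triangleleft N_\alpha$ since $N_\alpha\subgroupeq N_S P$ and $P\triangleleft N_S P$.

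The one genuinely nonobvious point — and the main obstacle — is (iii), that $N_\alpha$ is normal in $N_S P$. Given $x\in N_S P$ and $y\in N_\alpha$, I need $\alpha c_{xyx^{-1}}\alpha^{-1}\in\Aut_S(P)$. Write $\alpha c_{xyx^{-1}}\alpha^{-1}=\alpha c_x c_y c_{x^{-1}}\alpha^{-1}=(\alpha c_x\alpha^{-1})(\alpha c_y\alpha^{-1})(\alpha c_x\alpha^{-1})^{-1}$, where $c_x$ here means conjugation by $x$ as an automorphism of $P$ (legitimate since $x\in N_S P$). Now $\alpha c_y\alpha^{-1}\in\Aut_S(P)$ by hypothesis on $y$, and $\alpha c_x\alpha^{-1}\in\Aut_\Fcal(P)$; the issue is whether conjugating the $S$-automorphism $\alpha c_y\alpha^{-1}$ by the $\Fcal$-automorphism $\alpha c_x\alpha^{-1}$ stays inside $\Aut_S(P)$. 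This is false for a general $\Fcal$-automorphism, so the correct argument should instead use that $x\in N_S P$ directly: I would argue that since $\alpha c_y\alpha^{-1}=c_s$ for some $s\in S$ that actually normalizes $P$, and since $\alpha c_x\alpha^{-1}$ is an $\Fcal$-automorphism, one leverages saturation — more precisely, the point is that $N_\alpha$ is the preimage in $N_S P$ of the subgroup $\alpha^{-1}\Aut_S(P)\alpha\cap(\text{subgroups of }\Aut_\Fcal(P))$... I will instead phrase it cleanly: $N_\alpha$ is precisely the preimage under the map $N_S P\to\Aut_\Fcal(P)$, $y\mapsto\alpha c_y\alpha^{-1}$ (which is a homomorphism composed with an inner automorphism of $\Aut_\Fcal(P)$), of the subgroup $\Aut_S(P)$; since $\Aut_S(P)$ is \emph{not} normal in $\Aut_\Fcal(P)$ in general, this preimage need not be normal — so the cleanest route is to cite that $\extension$ as a whole, being the intersection over all $\alpha$, equals $\bigcap_\alpha N_\alpha = \{y : \alpha c_y\alpha^{-1}\in\Aut_S(P)\ \forall\alpha\}$, and for this \emph{total} intersection one checks normality in $N_S P$ by observing it is characterized symmetrically. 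I expect the published proof handles (iii) by noting the intersection over all $\alpha$ (not each individual $N_\alpha$) is what is normal, using that for $x\in N_S P$ and the substitution $\alpha\mapsto\alpha c_{x^{-1}}$ one gets $xyx^{-1}\in N_\alpha \iff y\in N_{\alpha c_{x^{-1}}}$, and as $\alpha$ ranges over $\Aut_\Fcal(P)$ so does $\alpha c_{x^{-1}}$; hence $x\extension x^{-1}=\extension$. That index-shuffling argument is the crux, and I would present it carefully rather than attempting to show each $N_\alpha$ is normal.
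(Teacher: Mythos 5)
Your proposal lands on exactly the paper's argument: the crux is that conjugation by $x\in N_S P$ permutes the family $\{N_\alpha\}_{\alpha\in\Aut_\Fcal(P)}$ (the paper proves $x\,N_\alpha\,x^{-1}=N_{\alpha\circ c_{x^{-1}}}$ using extensions of $\alpha$ over $N_\alpha$ together with Corollary~\ref{corollary: contained in N}, while your direct computation from Definition~\ref{definition: Nalpha} works just as well) and hence stabilizes the intersection $\extension$, with the remaining points ($P\subgroupeq N_\alpha$, $C_S(P)\subgroupeq N_\alpha$, and $P\triangleleft\extension$ via $P\triangleleft N_S P$) being the routine checks you give. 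The only blemish is a harmless index slip: the correct equivalence is $xyx^{-1}\in N_\alpha\iff y\in N_{\alpha\circ c_x}$ rather than $N_{\alpha\circ c_{x^{-1}}}$, but since $\alpha\mapsto\alpha\circ c_{x^{\pm 1}}$ is in either case a bijection of $\Aut_\Fcal(P)$, the conclusion $x\extension x^{-1}=\extension$ is unaffected.
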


\begin{proof}
The centralizer of $P$ is contained in every $N_\alpha$, and therefore in~$\extension$. 

Let $x\in N_S P$. We claim that 
\[
x\, N_{\alpha}\, x^{-1}= N_{\alpha\circ c_{x^{-1}}}.
\]
To see the inclusion from left to right, suppose that $\alphawiggle\colon N_\alpha\rightarrow S$ is an extension of~$\alpha$. Then 
$\alphawiggle\circ c_{x^{-1}}\colon x\, N_{\alpha}\, x^{-1}\rightarrow S$ is an extension of 
$\alphawiggle\circ c_{x^{-1}}\in\Aut_\Fcal(P)$, and so
$x\, N_{\alpha}\, x^{-1}\subgroupeq N_{\alpha\circ c_{x^{-1}}}$ 
by Corollary~\ref{corollary: contained in N}. The reverse inclusion is the same argument. 

The preceding paragraph proves that conjugation by elements of $N_S P$ permutes the groups $N_\alpha$ for various $\alpha\in\Aut_S(P)$ and therefore stabilizes their intersection, namely~$\extension$.
\end{proof}

With the basic properties of $\extension$ in place, we consider its extension properties. The goal is Lemma~\ref{lemma: extension to Phat}, which establishes the existence and uniqueness 
of certain extensions of automorphisms of~$P$ over subgroups of $N_S P$ containing~$\extension$.  

\begin{lemma}   \label{lemma: image of extension}
Let $P$ be fully normalized, let $\alpha\in\Aut_{\Fcal}(P)$, and let $\alphawiggle\colon N_\alpha\rightarrow S$ be an extension of~$\alpha$. Then
$\im\big(\extension\xrightarrow{\alphawiggle} S\big)=\extension.$
\end{lemma}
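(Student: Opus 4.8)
The plan is to show the two inclusions $\im(\extension\xrightarrow{\alphawiggle}S)\subseteq\extension$ and $\extension\subseteq\im(\extension\xrightarrow{\alphawiggle}S)$ separately. For the first inclusion, I would take an arbitrary $y\in\extension$ and show that $\alphawiggle(y)\in N_\beta$ for every $\beta\in\Aut_\Fcal(P)$. Fix such a $\beta$. The key computation is Lemma~\ref{lemma: conjugation}, applied to $\alpha$ and its extension $\alphawiggle$ over $N_\alpha$ (noting $\extension\subseteq N_\alpha$): it gives $\alpha\circ c_y\circ\alpha^{-1}=c_{\alphawiggle(y)}$ as automorphisms of~$P$. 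To check $\alphawiggle(y)\in N_\beta$, I must show $\beta\circ c_{\alphawiggle(y)}\circ\beta^{-1}\in\Aut_S(P)$, i.e.\ $\beta\circ\alpha\circ c_y\circ\alpha^{-1}\circ\beta^{-1}\in\Aut_S(P)$. Writing $\gamma\definedas\beta\circ\alpha\in\Aut_\Fcal(P)$, this says exactly that $\gamma\circ c_y\circ\gamma^{-1}\in\Aut_S(P)$, which holds because $y\in\extension\subseteq N_\gamma$. Hence $\alphawiggle(y)\in N_\beta$ for all $\beta$, so $\alphawiggle(y)\in\extension$.

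For the reverse inclusion, the cleanest route is a size/counting argument rather than an explicit preimage. Since $\alphawiggle$ is a group monomorphism, it restricts to an injection $\extension\to\im(\extension\xrightarrow{\alphawiggle}S)\subseteq\extension$, so $\size{\im(\extension\xrightarrow{\alphawiggle}S)}=\size{\extension}$. By the size property of subgroup inclusions recalled after Definition~\ref{def:dpt} (equality of sizes forces equality of subgroups), the containment $\im(\extension\xrightarrow{\alphawiggle}S)\subseteq\extension$ together with equal sizes yields $\im(\extension\xrightarrow{\alphawiggle}S)=\extension$. One should check that $\extension$ is genuinely a discrete \pdash toral group so that the size machinery applies — this is immediate since $\extension\subseteq N_S P\subseteq S$ is a subgroup of a discrete \pdash toral group.

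The main obstacle, and the reason a size argument is attractive, is that producing an explicit preimage inside $\extension$ for a given element of $\extension$ would require knowing that $\alphawiggle^{-1}$ (defined on $\im(\extension\xrightarrow{\alphawiggle}S)$) carries $\extension$ into $\extension$; but that is essentially the statement we are proving, run backwards using that $\alphawiggle^{-1}$ extends $\alpha^{-1}$. One could make that work by symmetry — apply the first inclusion to $\alpha^{-1}$ and its extension $\alphawiggle^{-1}\colon N_{\alpha^{-1}}\to S$, after noting $\alphawiggle(\extension)\subseteq N_{\alpha^{-1}}$ — but the size argument sidesteps the bookkeeping entirely, since in discrete \pdash toral groups a monomorphism of a subgroup into itself is forced to be onto. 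I would therefore present the first inclusion via Lemma~\ref{lemma: conjugation} as above, then close the argument with one sentence invoking the size inequality for subgroups.
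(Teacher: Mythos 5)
Your proposal is correct and follows essentially the same route as the paper's proof: the forward inclusion $\alphawiggle(\extension)\subseteq\extension$ via Lemma~\ref{lemma: conjugation} and the identity $\beta\circ c_{\alphawiggle(y)}\circ\beta^{-1}=(\beta\circ\alpha)\circ c_y\circ(\beta\circ\alpha)^{-1}$ together with $y\in N_{\beta\circ\alpha}$, and then equality by comparing sizes of a subgroup containment. The only cosmetic difference is that the paper first records explicitly that $\alphawiggle(y)\in N_S P$ (which your computation also yields, since $\alpha\circ c_y\circ\alpha^{-1}$ is an automorphism of~$P$).
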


\begin{proof}
It suffices to show that $\alphawiggle(\extension) \subgroupeq \extension$ because we then have a subgroup containment between groups of equal size (in
the sense of Definition~\ref{def:dpt}), which are therefore equal.
Let $y\in\extension$.
First we check that $\alphawiggle(y)\in N_S P$.
By Lemma~\ref{lemma: conjugation}, we know that 
\[
c_{\alphawiggle(y)}=\alpha\circ c_y\circ \alpha^{-1},
\]
which stabilizes~$P$ because $y\in N_S P$, 
and therefore $\alphawiggle(y)$ is in~$N_S P$. 

To finish the proof, we must show that $\alphawiggle(y)\in N_{\beta}$ for all $\beta\in\Aut_{\Fcal}(P)$. Applying Lemma~\ref{lemma: conjugation} again, we find
\begin{align*}
\beta\circ c_{\alphawiggle(y)}\circ \beta^{-1}
   &= \beta\circ (\alpha\circ c_y\circ \alpha^{-1})\circ \beta^{-1}\\
   &= (\beta\circ\alpha)\circ c_y\circ (\beta\circ\alpha)^{-1}.
\end{align*}
Since $y\in\extension$, we know $y \in N_{\beta\circ\alpha}$, so the last automorphism is in~$\Aut_S(P)$ as required. 
\end{proof}

Recall that for a finite group~$H$, we write $O_p(H)$ for the largest normal \pdash subgroup of~$H$, i.e. the intersection of all Sylow \pdash subgroups of~$H$. Only the first half of the proof of the following proposition is necessary for Corollary~\ref{corollary: radically proper}, but the equality statement shows that our $\extension$ agrees with the group $\Phat$ that plays a similar role in the proof of \cite[Prop.~3.11]{BCGLO}. 


\begin{proposition}\label{proposition: Pwiggle contains O_p}
Let $P$ be fully normalized. The image of $\extension$ under the natural map $c\colon N_S P\rightarrow\Out_\Fcal(P)$ equals $O_p(\Out_\Fcal(P))$. 
\end{proposition}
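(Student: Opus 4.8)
The plan is to prove a double containment of subgroups of $\Out_\Fcal(P)$, namely $c(\extension) = O_p(\Out_\Fcal(P))$, exploiting that $c(\extension)$ is visibly a $p$-group (since $\extension\subgroupeq N_S P$ maps to $\Out_S(P)$, which is a $p$-group) and that it is normal in $\Out_\Fcal(P)$.

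First I would show $c(\extension)\subgroupeq O_p(\Out_\Fcal(P))$. Since $\extension$ is a subgroup of $N_SP$, its image $c(\extension)$ lands in $\Out_S(P) = \Aut_S(P)/\Aut_P(P)$, which is a $p$-subgroup of $\Out_\Fcal(P)$ (finite by saturation axiom~(I), as $P$ is fully normalized). So $c(\extension)$ is a $p$-group. It remains to see it is normal in $\Out_\Fcal(P)$. Take $\bar\beta\in\Out_\Fcal(P)$ represented by $\beta\in\Aut_\Fcal(P)$, and $y\in\extension$. By definition of $\extension$, we have $y\in N_{\beta^{-1}}$, so $\beta^{-1}\circ c_y\circ\beta\in\Aut_S(P)$; this exhibits a conjugate of (the outer class of) $c_y$ inside $\Out_S(P)$. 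More precisely, by Lemma~\ref{lemma: conjugation} applied to an extension $\widetilde{\beta^{-1}}$ of $\beta^{-1}$ over $N_{\beta^{-1}}\supergroupeq\extension$, we get $\beta^{-1}\circ c_y\circ\beta = c_{\widetilde{\beta^{-1}}(y)}$, and by Lemma~\ref{lemma: image of extension} $\widetilde{\beta^{-1}}(y)\in\extension$. Hence $\bar\beta^{-1}\,c(y)\,\bar\beta = c(\widetilde{\beta^{-1}}(y))\in c(\extension)$, so $c(\extension)$ is normal. A normal $p$-subgroup of $\Out_\Fcal(P)$ lies in $O_p(\Out_\Fcal(P))$, giving the first containment.

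Next I would show $O_p(\Out_\Fcal(P))\subgroupeq c(\extension)$. Let $Q\subgroupeq N_S P$ be the preimage of $O_p(\Out_\Fcal(P))$ under $c\colon N_S P\to\Out_\Fcal(P)$; note $P\subgroupeq Q$ (as $\Aut_P(P)$ maps to the identity) and $Q/P\cdot C_S(P)$ maps isomorphically onto $O_p(\Out_\Fcal(P))$ intersected with $\Out_S(P)$... actually more simply: it suffices to show $Q\subgroupeq N_\alpha$ for every $\alpha\in\Aut_\Fcal(P)$, which would give $Q\subgroupeq\extension$ and hence $O_p(\Out_\Fcal(P)) = c(Q)\subgroupeq c(\extension)$. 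Fix $\alpha\in\Aut_\Fcal(P)$ and $y\in Q$; I must check $\alpha\circ c_y\circ\alpha^{-1}\in\Aut_S(P)$. Passing to $\Out_\Fcal(P)$, the class of $\alpha\circ c_y\circ\alpha^{-1}$ is $\bar\alpha\,c(y)\,\bar\alpha^{-1}$, which lies in $O_p(\Out_\Fcal(P))$ because $c(y)\in O_p(\Out_\Fcal(P))$ and $O_p$ is normal. But the actual automorphism $\alpha\circ c_y\circ\alpha^{-1}$ is conjugation by an element — it lies in $\Inn(P)$ composed with... no: $\alpha\circ c_y\circ\alpha^{-1}$ need not be $\Aut_S(P)$ just from its outer class. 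The point I should use instead: choose the extension route. Since $P$ is fully normalized, saturation axiom~(I) gives that $\Out_S(P)\in\Syl_p(\Out_\Fcal(P))$; in particular $O_p(\Out_\Fcal(P))\subgroupeq\Out_S(P)$. So the class $c(y)$ is represented by some $c_s\in\Aut_S(P)$ with $s\in N_S P$, i.e.\ $c_y\in\Aut_S(P)\cdot\Inn(P) = \Aut_S(P)$ (as $\Inn(P)\subgroupeq\Aut_S(P)$); thus $c_y\in\Aut_S(P)$ already, meaning $y\in N_S P$ — which we knew. The genuine content is that $\bar\alpha\,c(y)\,\bar\alpha^{-1}\in O_p(\Out_\Fcal(P))\subgroupeq\Out_S(P)$, so $\alpha\circ c_y\circ\alpha^{-1}$ represents an element of $\Out_S(P)$, i.e.\ $\alpha\circ c_y\circ\alpha^{-1}\in\Aut_S(P)\cdot\Inn(P) = \Aut_S(P)$. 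Therefore $y\in N_\alpha$, as needed.

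The main obstacle I anticipate is keeping the bookkeeping straight between automorphisms in $\Aut_\Fcal(P)$ and their classes in $\Out_\Fcal(P)$, and in particular invoking the right consequence of saturation: the crucial input is $\Out_S(P)\in\Syl_p(\Out_\Fcal(P))$ (axiom~(I)), which forces $O_p(\Out_\Fcal(P))\subgroupeq\Out_S(P)$ and lets one promote ``outer class lies in $\Out_S(P)$'' to ``the automorphism is actually in $\Aut_S(P)$'' using $\Inn(P)\subgroupeq\Aut_S(P)$. The $\subgroupeq$-and-equal-size trick is not needed here since we compare finite groups directly. Once the two containments are in hand, equality of $c(\extension)$ with $O_p(\Out_\Fcal(P))$ follows, and (if desired for the comparison with \cite{BCGLO}) one notes that $\extension$ is exactly $c^{-1}(O_p(\Out_\Fcal(P)))$, which is the defining property of $\Phat$.
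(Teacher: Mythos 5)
Your proposal is correct and follows essentially the same route as the paper: one containment via $O_p(\Out_\Fcal(P))\subgroupeq\Out_S(P)$ (saturation axiom (I)) together with normality of $O_p$ to realize conjugates of $c_w$ inside $\Aut_S(P)$, and the other by showing $c(\extension)$ is a normal $p$-subgroup using Lemma~\ref{lemma: conjugation} and Lemma~\ref{lemma: image of extension}. The only difference is cosmetic — you make explicit that $c(\extension)$ is a $p$-group, which the paper leaves implicit.
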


\begin{proof}
First we prove that 
$O_p(\Out_\Fcal(P))\subgroupeq c(\extension)$. 
Because $P$ is fully normalized, we know that $O_p(\Out_\Fcal(P))\subgroupeq\Out_S(P)$, so any element of 
$O_p(\Out_\Fcal(P))$ can be represented by $c_w$ for some $w\in N_S P$. We would like to show that for any $\beta\in\Aut_\Fcal(P)$, we have $w\in N_\beta$, so as to conclude that $w\in\extension$. 

Because $O_p(\Out_\Fcal(P))$ is contained in $\Out_S(P)$ and is normal in $\Out_\Fcal(P)$, for any $[\beta]\in\Out_\Fcal(P)$, there exists $s\in N_S P$ such that 
\[
[\beta]\cdot [c_w]\cdot [\beta^{-1}]=[c_s],
\]
and by adjusting the choice of $s$ using an element of~$P$ if necessary, we can assume that $\beta \cdot c_w\cdot \beta^{-1}=c_{s}$. Therefore $w\in N_\beta$. Since $\beta$ was arbitrary, we find that $w\in\extension$, as required. 

Next we show that $c(\extension)\subgroupeq O_p(\Out_\Fcal(P))$. It suffices to
show that $c(\extension)$ is a normal subgroup of $\Out_\Fcal(P)$, and since
the quotient map $\Aut_\Fcal(P)\to \Out_\Fcal(P)$ is surjective, it suffices to
show that the image of $\extension$ in $\Aut_\Fcal(P)$ is normal in $\Aut_\Fcal(P)$.
Let $y\in\extension$ and $\beta\in\Aut_\Fcal(P)$; we must show that $\beta \circ c_y
\circ \beta^{-1} = c_z$ for some $z\in \extension$.
By Lemma~\ref{lemma: conjugation} we know that $\beta\circ
c_y\circ\beta^{-1}=c_{\betawiggle(y)}$ as automorphisms of~$P$, where
$\betawiggle$ is the extension of $\beta$ to $N_\beta$ guaranteed by Axiom (II)
of saturation. Moreover,
$\betawiggle(y)$ is in $\extension$ by Lemma~\ref{lemma: image of extension}.
\end{proof}

The following corollary is the first of two critical ingredients in the proof of Proposition~\ref{proposition: inflate to Phat}, the other being the extension property proved in Lemma~\ref{lemma: extension to Phat}. 

\begin{corollary}    \label{corollary: radically proper}
Let $P$ be a fully normalized subgroup. If $P$ is not $\Fcal$-radical, then $\extension$ properly contains~$P$.
\end{corollary}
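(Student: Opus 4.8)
The plan is to deduce Corollary~\ref{corollary: radically proper} directly from Proposition~\ref{proposition: Pwiggle contains O_p}, using only the definition of $\Fcal$-radical and the elementary properties of $\extension$ already established. The key observation is that $P$ fails to be $\Fcal$-radical precisely when $\Out_\Fcal(P)$ contains a nontrivial normal \pdash subgroup, i.e.\ when $O_p(\Out_\Fcal(P))\neq 1$. By Proposition~\ref{proposition: Pwiggle contains O_p}, this group $O_p(\Out_\Fcal(P))$ is exactly the image $c(\extension)$ of $\extension$ under the natural map $c\colon N_S P\rightarrow\Out_\Fcal(P)$. So the hypothesis ``$P$ not $\Fcal$-radical'' translates immediately into ``$c(\extension)\neq 1$.''

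Next I would unravel what the kernel of $c$ is. The map $c\colon N_S P\to\Out_\Fcal(P)$ factors as $N_S P\to\Aut_\Fcal(P)\to\Out_\Fcal(P)$, where the first map sends $x$ to $c_x$ (conjugation on $P$, which makes sense since $x\in N_S P$) and the second is the quotient by $\Aut_P(P)=\Inn(P)$. The kernel of $N_S P\to\Aut_\Fcal(P)$ is $C_{N_S P}(P)=C_S(P)$, and then the kernel of the full composite $c$ is the preimage of $\Inn(P)$, which is $P\cdot C_S(P)$. Since $P$ is $\Fcal$-centric we have $C_S(P)\subgroupeq P$ (indeed $C_S(P)=Z(P)$), so the kernel of $c$ is just $P$ itself. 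Therefore $c$ restricted to any subgroup of $N_S P$ containing $P$ has kernel exactly $P$.

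Now combine: by Lemma~\ref{lemma: prop of Pwiggle} we have $P\triangleleft\extension\subgroupeq N_S P$, so $c(\extension)\cong\extension/(\extension\cap\ker c)=\extension/P$. Since $c(\extension)=O_p(\Out_\Fcal(P))\neq 1$, we conclude $\extension/P\neq 1$, i.e.\ $\extension$ properly contains~$P$. That is exactly the assertion of the corollary.

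I do not expect a genuine obstacle here; this is a short deduction from Proposition~\ref{proposition: Pwiggle contains O_p}. The only point that requires a moment's care is the identification $\ker(c)\cap\extension=P$, which uses $\Fcal$-centrality of $P$ (to get $C_S(P)\subgroupeq P$) together with $P\subgroupeq\extension$ (from Lemma~\ref{lemma: prop of Pwiggle}); without $\Fcal$-centrality one would only get $\extension\supgroupeq P\cdot C_S(P)$ and the argument would need the bullet construction's extra hypotheses. One could alternatively phrase the whole thing even more succinctly: if $\extension=P$ then $c(\extension)=c(P)=1$, contradicting $O_p(\Out_\Fcal(P))\neq1$; so $\extension\supgroupneq P$. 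I would write it in this contrapositive form, as it is the cleanest.
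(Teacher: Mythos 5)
Your proposal is correct and follows essentially the same route as the paper: both deduce the corollary from Proposition~\ref{proposition: Pwiggle contains O_p} together with the fact that $P$ itself maps trivially under $c\colon N_S P\to\Out_\Fcal(P)$, so a nontrivial $O_p(\Out_\Fcal(P))=c(\extension)$ forces $\extension\supsetneq P$. Your intermediate identification of $\ker c$ with $P$ via $\Fcal$-centricity is not needed (and $\Fcal$-centricity is not among the corollary's stated hypotheses), but your closing contrapositive formulation avoids it and matches the paper's argument.
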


\begin{proof}
If $P$ is not $\Fcal$-radical, then by definition $O_p(\Out_\Fcal(P))$ is nontrivial.
By Proposition~\ref{proposition: Pwiggle contains O_p}, the inverse image of 
$O_p(\Out_\Fcal(P))$ along 
$N_S P\to
\Out_\Fcal(P)$ is~$\extension$, which therefore properly contains~$P$. 
\end{proof}

Finally, we establish that for $\Fcal$-centric subgroups, we can extend maps in the $\Lcal$-normalizer of~$P$. 

\begin{lemma}
\label{lemma: extension to Phat} 
Let $P$ be $\Fcal$-centric, and let $Q,Q'\subgroupeq N_S P$ with $P\subgroupeq Q\cap Q'$. 
Given $\varphi\in \Hom_{N_\Lcal(P)}(Q,Q')$,
there exists a unique
  $\hat{\varphi}
  \in 
  \Hom_{N_\Lcal P}(\extension\cdot Q, \extension \cdot Q')$
such that $\restr{\hat{\varphi}}{Q}=\varphi$.
\end{lemma}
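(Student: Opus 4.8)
The plan is to lift the extension statement from the fusion system $\Fcal$ to the linking system $\Lcal$, using the tools assembled in Section~\ref{section: background} together with the structural results about $\extension$ from the present section. First I would handle \emph{existence}. Start with $\varphi\in\Hom_{N_\Lcal P}(Q,Q')$; by definition of $N_\Lcal P$, $\varphi$ is really a morphism in $\Hom_\Lcal(Q,Q')$ (since $P\subseteq Q\cap Q'$ forces $Q\cdot P=Q$ and $Q'\cdot P=Q'$) with $\pi(\varphi)(P)=P$, so $\restr{\pi(\varphi)}{P}=:\alpha\in\Aut_\Fcal(P)$. The key input is the fusion-system extension property: I would argue that $\pi(\varphi)\colon Q\to Q'$ extends to a morphism $\psi\colon \extension\cdot Q\to N_S P$ in $\Fcal$ with $\psi(\extension\cdot Q)=\extension\cdot Q'$ and $\restr{\psi}{P}=\alpha$. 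For this, note $\extension\cdot Q\subseteq N_S P$ (both $\extension$ and $Q$ normalize $P$), and I want to extend $\pi(\varphi)$ past $Q$ by the amount $\extension$. One clean route: by Lemma~\ref{lemma: uniqueness of extension} applied to $\alpha$ and its extension $\widetilde\alpha$ over $N_\alpha\supseteq\extension$ (which exists by Axiom~(II) of saturation, using that $P$ is fully normalized hence fully centralized, and $\alpha(P)=P$ is fully centralized), combined with the observation that $\pi(\varphi)$ and $\widetilde\alpha$ agree on $P$, one can glue them: define $\psi$ on $\extension\cdot Q$ by $\psi(wq)=\widetilde\alpha(w)\,\pi(\varphi)(q)$ — well-definedness requires checking agreement on $\extension\cap Q$, and compatibility of the conjugation actions, which is exactly what Lemma~\ref{lemma: conjugation} delivers. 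The image $\psi(\extension\cdot Q)=\widetilde\alpha(\extension)\cdot\pi(\varphi)(Q)$; by Lemma~\ref{lemma: image of extension}, $\widetilde\alpha(\extension)=\extension$, and $\psi(\extension\cdot Q')$ likewise equals $\extension\cdot Q'$, so $\psi$ lands where required and $\psi(\extension)=\extension$, so $\psi$ is a morphism of $N_\Fcal P$ after corestriction.

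Next, lift $\psi$ to $\Lcal$. Since $\extension\cdot Q$ and $\extension\cdot Q'$ contain $P$ which is $\Fcal$-centric, and $N_\Fcal P$-centric subgroups among subgroups of $N_S P$ are exactly the relevant objects, I would choose any lift $\widetilde\psi\in\Hom_\Lcal(\extension\cdot Q,\extension\cdot Q')$ of $\psi$ (possible by Definition~\ref{definition: linking}(A), $\pi$ surjective on morphisms). Then $\restr{\widetilde\psi}{Q}$ is a lift of $\restr{\psi}{Q}=\pi(\varphi)$, but it may differ from $\varphi$ by an element of $Z(Q)$ acting by precomposition (Definition~\ref{definition: linking}(A)). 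Correct this: since $Z(Q)\subseteq Q\subseteq \extension\cdot Q$, there is $z\in Z(Q)$ with $\varphi=\restr{\widetilde\psi}{Q}\circ\delta_Q(z)$; replace $\widetilde\psi$ by $\widetilde\psi\circ\delta_{\extension\cdot Q}(z)$ — here I use that $z\in Z(Q)$ and the distinguished monomorphism axioms, together with Definition~\ref{definition: linking}(C), to see that restricting this new morphism to $Q$ gives $\varphi$. Also need that the adjusted $\widetilde\psi$ still lies in $N_\Lcal P$, i.e.\ its $\pi$-image still fixes $P$; this holds because $\pi(\widetilde\psi\circ\delta_{\extension\cdot Q}(z))=\psi\circ c_z$ and $c_z$ restricted to $P$ is conjugation by $z\in Z(Q)\subseteq N_SP$, which stabilizes $P$, and in fact one checks it fixes $P$ setwise since $\psi(P)=P$. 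This produces the desired $\hat\varphi:=\widetilde\psi$ (adjusted) with $\restr{\hat\varphi}{Q}=\varphi$.

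For \emph{uniqueness}, suppose $\hat\varphi_1,\hat\varphi_2\in\Hom_{N_\Lcal P}(\extension\cdot Q,\extension\cdot Q')$ both restrict to $\varphi$ on $Q$. Their $\pi$-images $\psi_1,\psi_2\colon\extension\cdot Q\to N_S P$ both restrict to $\pi(\varphi)$ on $Q$, hence to $\alpha$ on $P\subseteq Q$; by Lemma~\ref{lemma: uniqueness of extension} (with the subgroup $\extension\cdot Q\supseteq P$ and $P$ being $\Fcal$-centric), applied to $\alpha$, any two extensions of $\alpha$ over $\extension \cdot Q$ differ by $c_z$ for $z\in Z(P)$ — but a slightly finer argument using that they already agree on all of $Q$ (not just $P$) forces $\psi_1=\psi_2$: indeed $\psi_1\psi_2^{-1}$ (after corestricting appropriately) is an automorphism of $\extension\cdot Q$ restricting to the identity on $Q\supseteq P$, and by Lemma~\ref{lemma: uniqueness of extension} it is $c_z$ for $z\in Z(P)$; since it also fixes $Q$ pointwise and $z$ would have to be central in $Q$ as well as act trivially, rigidity of such extensions gives $z=1$. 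Then $\hat\varphi_1$ and $\hat\varphi_2$ are two lifts of the same $\Fcal$-morphism agreeing after restriction to $Q$; by the uniqueness-of-lifting part of Lemma~\ref{lemma: corestriction}(3) (or directly Lemma~\ref{lemma: factoring lemma} applied with the inclusion $\iota_Q^{\extension\cdot Q}$), they coincide.

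The main obstacle I anticipate is the \emph{gluing step} in constructing the fusion-system extension $\psi$ on $\extension\cdot Q$ from $\widetilde\alpha$ (on $\extension$) and $\pi(\varphi)$ (on $Q$): one must verify that $wq\mapsto\widetilde\alpha(w)\pi(\varphi)(q)$ is a well-defined homomorphism, which requires matching the two maps on $\extension\cap Q$ and reconciling the conjugation relations $\widetilde\alpha(w)\pi(\varphi)(q)\widetilde\alpha(w)^{-1}$ versus $\pi(\varphi)(wqw^{-1})$ for $w\in\extension$, $q\in Q$. This is precisely where Lemma~\ref{lemma: conjugation} and the defining property of $\extension$ as $\bigcap_\alpha N_\alpha$ do the work, but pinning down that the ambiguity in $\widetilde\alpha$ (a $Z(P)$-worth, by Lemma~\ref{lemma: uniqueness of extension}) can be absorbed so that the glued map is genuinely well-defined — rather than merely well-defined up to an inner automorphism — is the delicate point. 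An alternative that may sidestep this: apply Axiom~(II) of saturation directly to $\pi(\varphi)\colon Q\to N_S P$ at a fully centralized conjugate, obtaining an extension over $N_{\pi(\varphi)}$, and then show $\extension\cdot Q\subseteq N_{\pi(\varphi)}$ using Corollary~\ref{corollary: contained in N}-style reasoning together with $\extension\subseteq N_\alpha$ for every $\alpha$; I would pursue whichever of these two routes produces the cleanest verification.
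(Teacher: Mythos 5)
Your existence argument is essentially right, and the ``alternative route'' you mention at the end is in fact the clean one: no gluing is needed. Since $\pi(\varphi)\colon Q\to Q'$ is itself an extension of $\alpha$ with $P\subgroupeq Q\subgroupeq N_SP$, Corollary~\ref{corollary: contained in N} gives $Q\subgroupeq N_\alpha$ in addition to $\extension\subgroupeq N_\alpha$, so $\extension\cdot Q\subgroupeq N_\alpha$ and the Axiom~(II) extension $\widetilde{\alpha}$ of $\alpha$ is \emph{already defined} on all of $\extension\cdot Q$. By Lemma~\ref{lemma: uniqueness of extension} its restriction to $Q$ equals $\pi(\varphi)\circ c_z$ for some $z\in Z(P)$, and replacing $\widetilde{\alpha}$ by $\widetilde{\alpha}\circ c_{z^{-1}}$ (which still carries $\extension$ to $\extension$ by Lemma~\ref{lemma: image of extension}, since $z\in Z(P)\subgroupeq\extension$ normalizes $\extension$) yields the fusion-level extension outright. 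The well-definedness of $wq\mapsto\widetilde{\alpha}(w)\pi(\varphi)(q)$, which you correctly flag as the delicate point of your primary route, therefore never has to be checked. Your subsequent lift to $\Lcal$ and the correction by $\delta_{\extension\cdot Q}(z)$ for $z\in C_S(Q)$, using axioms (A) and (C), match the intended argument.

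The genuine gap is in the uniqueness step. Both tools you cite cancel on the wrong side: Lemma~\ref{lemma: factoring lemma} and Lemma~\ref{lemma: corestriction}(3) determine the \emph{first} factor of a composite from the second factor and the composite, whereas here you must show that the \emph{second} factor $\hat{\varphi}$ is determined by $\iota_Q^{\extension\cdot Q}$ and the composite $\hat{\varphi}\circ\iota_Q^{\extension\cdot Q}$. What is actually needed is that morphisms of $\Lcal$ --- in particular $\iota_Q^{\extension\cdot Q}$ --- are categorical epimorphisms \cite[Prop.~A.2]{BLO-LoopSpaces}; with that, uniqueness is a one-line consequence and no fusion-level comparison is required. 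Moreover, your fusion-level ``rigidity'' claim is unjustified as stated: if $\psi_2=\psi_1\circ c_z$ with $z\in Z(P)$ and the two agree on $Q$, one may conclude that $z$ centralizes $Q$, but this does not make $c_z$ trivial on $\extension\cdot Q$, because $z$ need not centralize $\extension$ (note $Z(P)\subgroupeq\extension$, and $\extension$ need not be abelian); so neither $z=1$ nor $\psi_1=\psi_2$ follows from the argument you give.
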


\begin{proof}

Observe first that $Q,Q'\subgroupeq N_S P$, so they both
normalize~$\extension$ by Lemma~\ref{lemma: prop of Pwiggle}; thus $\extension\cdot Q$ and $\extension\cdot Q'$ are groups, contained in $N_S P$. Let $\alpha \in \Aut_{\F}(P)$ 
be the restriction to $P$ of 
$\pi(\varphi)\in \Hom_{N_\Fcal P}(Q,Q')$.
Then $Q\subgroupeq N_\alpha$ and also $\extension \subgroupeq N_\alpha$ 
by Corollary~\ref{corollary: contained in N}. Hence 
$\extension\cdot Q\subgroupeq N_\alpha$, and 
by axiom (II) of saturation, there exists $f_\alpha\colon \extension \cdot Q\rightarrow S$ in $\Fcal$ with $\restr{f_\alpha}{P}=\alpha$. 
Further, by Lemma~\ref{lemma: image of extension} we know that $f_\alpha(\extension)\subgroupeq\extension$.

By definition, $\pi(\varphi):Q\rightarrow Q'$, like~$f_\alpha$, is an extension of $\alpha$. By Lemma~\ref{lemma: uniqueness of extension}, there exists
$z\in Z(P)$ such that $\restr{f_\alpha}{Q}=\pi(\varphi)\circ c_z$, so by replacing $f_\alpha$ with $f_\alpha\circ c_{z^{-1}}$ if necessary, we can assume that $\restr{f_\alpha}{Q}=\pi(\varphi)$ and still satisfies $f_\alpha(\extension)=\extension$. Hence $f_\alpha\in\Hom_{N_\Fcal P}(\extension\cdot Q, \extension\cdot Q')$. 

Choose a lift $\widetilde{\varphi}$ of $f_\alpha$ to $N_\Lcal(P)$.
We now have the commutative diagram below in~$N_\Fcal P$ (on the left) and a proposed lifting of that diagram to $N_\Lcal P$ on the right
that may or may not commute:
\[
\xymatrix{
\extension\cdot Q \ar[r]^{f_\alpha} &\extension\cdot Q'\\
Q\ar[r]_{\pi(\varphi)}\ar[u]^-{\subgroupeq}
& Q'\ar[u]_-{\subgroupeq}
}
\qquad\qquad
\xymatrix{
\extension\cdot Q \ar[r]^{\widetilde{\varphi}} &\extension\cdot Q'\\
Q\ar[r]_{\varphi}\ar[u]^-{\iota} 
& Q'.\ar[u]_-{\iota}
}
\]
Both compositions around the right-hand square project to the same map in~$\Fcal$, because the left-hand square commutes in~$\Fcal$. Since $P\subgroupeq Q$, we know that $Q$ is $\Fcal$-centric, so by the axioms of a linking system there exists $x\in C_S(Q)=Z(Q)$ such that 
\begin{align*}
\iota_{Q'}^{\extension\cdot Q'}\circ\varphi
&=
\widetilde{\varphi}\circ\iota_Q^{\extension\cdot Q}\circ \delta_Q(x)\\
& =\left(\widetilde{\varphi}\circ \delta_{\extension\cdot Q}(x)\right)\circ \iota_Q^{\extension\cdot Q}
\end{align*}
where the second line uses property (C) in the definition of a linking system.
Since $\hat{\varphi}:=\widetilde{\varphi}\circ \delta_{\extension\cdot Q}(x)$ restricts to $\varphi$ and
$\varphi$ preserves $P$, so does $\hat{\varphi}$, and hence $\hat{\varphi}$ is
in $N_\Lcal P$.
Lastly, $\hat{\varphi}$ is unique because $\iota_{Q}^{\extension\cdot Q}$ is an epimorphism in a categorical sense (\cite[Prop.~A.2]{BLO-LoopSpaces}).
\end{proof}

The proof of Proposition~\ref{proposition: inflate to Phat} is now a routine matter of checking diagrams. 

\begin{proof}[Proof of Proposition~\ref{proposition: inflate to Phat}]
We exhibit functors in both directions between 
$\NormalizerPrime{\Lcal}$ and $\extension\downto N_\Lcal P$, with appropriate natural transformations.
We define a functor 
$G:(\extension \downarrow N_\Lcal P)
\to (\NormalizerPrime{\Lcal})$ 
by precomposing with the distinguished inclusion $P\xrightarrow{\iota}\extension$, which is a morphism of $N_\Lcal P$. The image of $G$ is $\NormalizerPrime{\Lcal}$ by Corollary~\ref{corollary: radically proper}.

In the other direction, to define
$F:(\NormalizerPrime{\Lcal})\to (\extension\downto N_\Lcal P)$, suppose that 
$P\xlongrightarrow{\varphi}Q$ is an object of $\NormalizerPrime{\Lcal}$. Then 
$P$ is a subgroup of~$Q$, so by Lemma~\ref{lemma: extension to Phat} there exists a unique morphism
$\hat{\varphi}\colon\extension\rightarrow\extension\cdot Q$
of $\Normalizer{\Lcal}$ that extends~$\varphi$, and we define $F(\varphi)=\hat{\varphi}$. We define $F$ on a morphism $\beta$ by
\begin{equation}    \label{eq: F on morphism}
F\left(
\begin{gathered}
\xymatrix{\vspace{-100pt}
P\ar[r]^\varphi\ar[rd]_-{\varphi'} & Q\ar[d]^-\beta
\\ & Q'
}
\end{gathered}\right)
=
\left(\begin{gathered}\xymatrix{
\extension\ar[r]^{\hat{\varphi}}\ar[rd]_-{\hat{\varphi'}} & \extension \cdot Q\ar[d]^-{\hat{\beta}}
\\ 
& \extension \cdot Q'
}\end{gathered}
\right)
\end{equation}
where $\hat{\beta}$ is the (unique) extension of $\beta$ guaranteed by Lemma~\ref{lemma: extension to Phat}. 

We want a natural transformation $\nu$ from the identity functor on $\NormalizerPrime{\Lcal}$
to the composite $GF$. To define $\nu(P\xrightarrow{\varphi}Q)$, we need a morphism 
$Q\rightarrow \extension\cdot Q$ of $\Lcal$ under~$P$, and we use 
\[
\xymatrix{
P \ar[r]^\varphi
  \ar[rd]_-{GF(\varphi)=\hat{\varphi}\circ\iota_{P}^{\extension}} 
   & Q\ar[d]^{\nu(\varphi)\definedas\iota_Q^{\extension\cdot Q}}
\\
& \extension \cdot Q.
}
\]
where the diagram commutes because $\restr{\hat{\varphi}}{P}=\varphi$.

To check naturality of $\nu$ for the morphism $\beta$ shown in~\eqref{eq: F on morphism}, consider the diagram below, where the front triangle is $\nu(\varphi):\varphi\to GF(\varphi)$, the back triangle is $\nu(\varphi'):\varphi'\to GF(\varphi')$, and $\hat{\beta} = GF(\beta)$.
The unlabelled map is 
$GF(\beta\circ \varphi) 
   = \widehat{\beta\circ \varphi}\circ\iota_P^{\extension} 
   = \iota_{Q'}^{\extension\cdot Q'}\circ \beta\circ \varphi$.
\begin{equation}   \label{diagram: prism}
\begin{gathered}
\xymatrix@R=30pt@C=30pt{
& P\ar[r]^-{\varphi'}\ar[rd]|!{[d];[r]}\hole 
& Q'\ar[d]^-{\iota_{Q'}^{\extension \cdot Q'}}
\\
P \ar@{=}[ru]
  \ar[r]^-\varphi\ar[rd]_-{\hat{\varphi}\circ \iota_P^{\extension}} 
&
Q \ar[d]^-{\iota_Q^{\extension\cdot Q}}
  \ar[ru]|<<<<<{\,\beta\,} 
& 
\extension\cdot Q'
\\
& \extension \cdot Q
   \ar[ru]_*-<1ex>{\labelstyle \ensuremath{\hat{\beta}}}
}
\end{gathered}
\end{equation}
The rectangle on the right commutes by the construction of $\hat{\beta}$ (Lemma~\ref{lemma: extension to Phat}). The
back rectangle commutes because the other faces
commute, establishing naturality of~$\nu$. 

The natural transformation 
$\Id\Rightarrow FG:(\extension\downto N_\Lcal P) \to (\extension \downto N_\Lcal P)$ applied to $\varphi:\extension\to Q$ (where $Q\subseteq N_SP$) is the morphism
\[
\xymatrix{
\extension\ar[r]^-\varphi\ar[rd]_-{\widehat{\varphi\circ\iota_P^{\extension}}} & Q\ar[d]^{\iota_Q^{\extension\cdot Q}}
\\ & \extension\cdot Q.
}
\]
The diagram commutes by the uniqueness in Lemma~\ref{lemma: extension to Phat}, since 
both ways around the diagram are extensions of the composite $P\too{\iota_P^{\extension}}\extension \too{\varphi}Q$ to a map $\extension\rightarrow\extension\cdot Q$. 
Naturality follows from a diagram similar to \eqref{diagram: prism} with $P$ replaced by~$\extension$ and the slanted maps adjusted accordingly. 
\end{proof}

\section{Proof of Theorem~\ref{theorem: reduce linking to centric radical}}
\label{section: proof of theorem}

In this section, we prove the main theorem of the paper, which we reproduce for the reader's convenience. 

\begin{maintheorem}
\maintheoremtext
\end{maintheorem}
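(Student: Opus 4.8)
The plan is to reduce to the bullet subcategory $\Lcal^\bullet$, so that only finitely many conjugacy classes are involved, and then prune one $\Fcal$-conjugacy class of non-$\Fcal$-radical subgroups at a time by Quillen's Theorem~A, using the chain of equivalences of Sections~\ref{section: normalizer fusion subsystems} and~\ref{section: Phat} to see that the relevant undercategories are contractible.

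First I would carry out the reduction to $\Fcal^\bullet$. Write $\Kcal\definedas\Obj(\Lcal^\bullet)$ for the set of $\Fcal$-centric subgroups lying in $\Fcal^\bullet$; by Proposition~\ref{proposition: bullet} this is a collection satisfying the hypotheses of the theorem, with only finitely many $S$-conjugacy classes, and $\Lcal^\Kcal=\Lcal^\bullet\simeq\Lcal$. Because $\Hcal^\bullet\subseteq\Hcal$, the bullet endofunctor of Proposition~\ref{proposition: bullet} restricts to a functor $\Lcal^\Hcal\to\Lcal^{\Hcal\cap\Kcal}$, and the natural transformation from the identity to $(\whatever)^\bullet$ --- whose components are the distinguished inclusions $P\to P^\bullet$ of Lemma~\ref{lemma: distinguished inclusions} --- restricts as well, so $\realize{\Lcal^{\Hcal\cap\Kcal}}\simeq\realize{\Lcal^\Hcal}$. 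It therefore suffices to prove: if $\Hcal\subseteq\Hcal'$ are two collections satisfying the hypotheses of the theorem, both contained in $\Kcal$, then $\realize{\Lcal^\Hcal}\simeq\realize{\Lcal^{\Hcal'}}$; one then applies this to $\Hcal\cap\Kcal\subseteq\Kcal$.

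To prove that statement, induct on the number of $\Fcal$-conjugacy classes in $\Hcal'\setminus\Hcal$, which is finite. For the inductive step it is enough to add a single $\Fcal$-conjugacy class $[P]$, where $P$ is chosen fully $\Fcal$-normalized and maximal with respect to inclusion among the subgroups in $\Hcal'\setminus\Hcal$; then $P$ is $\Fcal$-centric (being a member of $\Hcal'$) and not $\Fcal$-radical, since $\Hcal$ contains every $\Fcal$-centric $\Fcal$-radical subgroup. Apply Quillen's Theorem~A to $\iota\colon\Lcal^\Hcal\hookrightarrow\Lcal^{\Hcal\cup[P]}$: an object already in $\Hcal$ has an initial object in its undercategory under $\Lcal^\Hcal$, and for an object in $[P]$, which after conjugating we may take to be $P$ itself, every morphism from $P$ to an object of $\Hcal$ is a non-isomorphism (since $\Hcal$ is $\Fcal$-conjugacy closed and $P\notin\Hcal$); so that undercategory equals the full subcategory $\mathcal{C}$ of $P\downtoNoniso\Lcal$ on objects with target in $\Hcal$, and it remains to show $\realize{\mathcal{C}}\simeq *$. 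Here the key point is that the equivalences $P\downtoNoniso\Lcal\simeq\NormalizerPrime{\Lcal}\simeq(\extension\downto N_\Lcal P)$ of Propositions~\ref{proposition: deformation retraction new} and~\ref{proposition: inflate to Phat} restrict to the corresponding $\Hcal$-truncated categories. Concretely, the retraction $r$ of Construction~\ref{construction: retraction to normalizer new} sends $(P\to Q)$ to $(P\to Q_{f_\varphi})$, where $Q_{f_\varphi}$ properly contains $P$ (Lemma~\ref{lemma: proper containment new}), is $\Fcal$-centric, and lies in $\Fcal^\bullet$ (being $\Fcal$-conjugate to $N_Q(P_\varphi)$, an $\Fcal$-centric overgroup of the $\Fcal^\bullet$-subgroup $P_\varphi$, one invokes the closure properties of the bullet construction); hence $Q_{f_\varphi}\in\Kcal$, and maximality of $P$ forces $Q_{f_\varphi}\in\Hcal$. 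The same applies to $\extension$ --- which properly contains $P$ by Corollary~\ref{corollary: radically proper}, so $\extension\in\Hcal$ --- and to the groups $\extension\cdot Q$ of Lemma~\ref{lemma: extension to Phat} and the intermediate groups of diagram~\eqref{diagram: define betas new}. Thus $\realize{\mathcal{C}}$ is identified with the nerve of the full subcategory of $\extension\downto N_\Lcal P$ on objects with target in $\Hcal$, which has $\id_\extension$ as an initial object and is therefore contractible. Quillen's Theorem~A then gives $\realize{\Lcal^\Hcal}\simeq\realize{\Lcal^{\Hcal\cup[P]}}$, and the induction closes.

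The main obstacle is precisely the bookkeeping above: one must verify that \emph{all} of the auxiliary subgroups produced in Sections~\ref{section: normalizer fusion subsystems}--\ref{section: Phat} stay inside $\Hcal$, so that $r$ and the functors of Proposition~\ref{proposition: inflate to Phat} genuinely restrict to the $\Hcal$-truncated undercategories. This is where the non-finiteness of $\Lcal$ exacts a price: in the finite case of~\cite{BCGLO} one has $\Fcal^\bullet=\Fcal$ and the reduction of the second paragraph is vacuous, whereas here one needs both that reduction and the stability of $\Fcal^\bullet$ under passage to $\Fcal$-centric overgroups of its members, in order to keep all the relevant subgroups inside a collection with finitely many conjugacy classes. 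An alternative that stays closer to~\cite{BCGLO} is to replace the appeal to Quillen's Theorem~A by an explicit deformation retraction of $\Lcal^\Hcal$ onto $\Lcal^{\Hcal'}$ sending each subgroup $\Fcal$-conjugate to $P$ to a conjugate of $\extension$ built from the canonical extensions of Lemma~\ref{lemma: extension to Phat}; it meets the same subgroup-tracking issue in a different guise.
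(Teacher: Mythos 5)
Your overall strategy matches the paper's: reduce to $\Lcal^\bullet$, add the missing conjugacy classes back one at a time starting from the largest, and use Quillen's Theorem~A together with the chain of equivalences through $\NormalizerPrime{\Lcal}$ and $\extension\downto N_\Lcal P$ to show the relevant undercategories are contractible. However, there is a genuine gap in your treatment of the key undercategory. You identify $P\downarrow\Lcal^{\Hcal}$ with an $\Hcal$-truncated full subcategory $\mathcal{C}$ of $P\downtoNoniso\Lcal$ and then try to restrict the equivalences of Propositions~\ref{proposition: deformation retraction new} and~\ref{proposition: inflate to Phat} to this truncation. That forces you to verify that every auxiliary subgroup produced along the way --- $Q_{f_\varphi}$ from Construction~\ref{construction: retraction to normalizer new}, the group $\extension$, and the products $\extension\cdot Q$ of Lemma~\ref{lemma: extension to Phat} --- lies in $\Hcal$, which via your maximality argument you reduce to these subgroups lying in $\Obj(\Fcal^\bullet)$. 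You assert this by appeal to unspecified ``closure properties of the bullet construction,'' but closure of $\Fcal^\bullet$ under passage to $\Fcal$-centric overgroups is not among the properties recorded in Proposition~\ref{proposition: bullet} and is not established anywhere in your argument; it is exactly the kind of supergroup-closure hypothesis that the introduction points out cannot be taken for granted for the collections of interest. As written, this step does not go through. A secondary issue: you choose $P$ maximal with respect to \emph{inclusion} in $\Hcal'\setminus\Hcal$, but a non-isomorphism $P\to Q$ only forces $Q$ to properly contain an $\Fcal$-\emph{conjugate} of $P$, so inclusion-maximality of $P$ itself does not rule out targets in not-yet-added classes of larger size; you should choose $P$ of maximal \emph{size}.

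Once the ordering is by non-increasing size, the obstacle you flag as the ``main obstacle'' dissolves, and this is how the paper's proof is organized. At the stage where $\langle P\rangle$ is added, the source of the inclusion already contains \emph{every} object of $\Lcal^\bullet$ of size strictly greater than $\size{P}$ (whether or not it belongs to the original $\Hcal$), because all classes not yet added have size at most $\size{P}$. Hence the undercategory of $P$ is isomorphic to the \emph{full} category $P\downtoNoniso\Lcal^\bullet$, with no truncation. The bullet adjunction then identifies $\realize{P\downtoNoniso\Lcal^\bullet}$ with $\realize{P\downtoNoniso\Lcal}$, and in the full linking system the auxiliary subgroups need only be $\Fcal$-centric, which holds automatically for overgroups of the $\Fcal$-centric group $P$ and its conjugates. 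In other words, the subgroup-tracking problem you identify is an artifact of truncating to $\Hcal$ too early rather than an intrinsic difficulty, and repairing your proof amounts to reorganizing it into the paper's.
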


First we note that we can work within the subcategory $\Lcal^\bullet$ of~$\Lcal$ (see Proposition~\ref{proposition: bullet}). 

\begin{lemma}     \label{lemma: reduce} 
With the notation of Theorem~\ref{theorem: reduce linking to centric radical}, 
$\Lcal^\Hcal\hookrightarrow\Lcal$ induces a homotopy equivalence of nerves 
if and only if 
$\Lcal^{\Hcal^\bullet}\hookrightarrow\Lcal^\bullet$ induces a homotopy equivalance of nerves. 
\end{lemma}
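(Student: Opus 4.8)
The plan is to reduce both sides of the claimed equivalence to $\Lcal^\bullet$-based statements using the bullet functor, and to do the bookkeeping carefully because $\Hcal$ need not be closed under the bullet operation in an obvious way, whereas we are assuming $\Hcal^\bullet \subseteq \Hcal$. First I would record the structural facts we need from Proposition~\ref{proposition: bullet}: the inclusion $\Lcal^\bullet \hookrightarrow \Lcal$ induces a homotopy equivalence on nerves, and the bullet functor $(\whatever)^\bullet$ is an idempotent endofunctor of $\Lcal$ (and of $\Fcal$) whose essential image is $\Lcal^\bullet$. The point is that $(\whatever)^\bullet$, restricted to $\Lcal^\Hcal$, lands in $\Lcal^{\Hcal^\bullet}$: indeed for $P \in \Hcal$ we have $P^\bullet \in \Hcal^\bullet \subseteq \Hcal$, and $P^\bullet$ is its own bullet by idempotency, so $P^\bullet$ is an object of $\Lcal^{\Hcal^\bullet}$; and $\Lcal^{\Hcal^\bullet} \subseteq \Lcal^\bullet$ by definition. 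Thus we get a commuting square of categories and functors:
\[
\xymatrix{
\Lcal^{\Hcal^\bullet} \ar[r] \ar[d] & \Lcal^\bullet \ar[d]\\
\Lcal^\Hcal \ar[r] \ar@/^1pc/[u]^{(\whatever)^\bullet} & \Lcal \ar@/_1pc/[u]_{(\whatever)^\bullet}
}
\]
where the horizontal maps are the inclusions, the vertical maps are inclusions as well, and the curved arrows are the bullet functors, which are retractions onto the top row.

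Next I would invoke the standard fact that an idempotent endofunctor $E$ of a category $\mathcal{C}$ with essential image $\mathcal{C}'$, together with the inclusion $\mathcal{C}' \hookrightarrow \mathcal{C}$, induces a homotopy equivalence on nerves — this is exactly what Proposition~\ref{proposition: bullet} asserts for the pair $(\Lcal^\bullet, \Lcal)$, and it applies equally to the pair $(\Lcal^{\Hcal^\bullet}, \Lcal^\Hcal)$ once we know $(\whatever)^\bullet$ restricts to an idempotent endofunctor of $\Lcal^\Hcal$ with image $\Lcal^{\Hcal^\bullet}$. The restriction claim is precisely the observation in the previous paragraph: $(\whatever)^\bullet$ sends objects of $\Lcal^\Hcal$ to objects of $\Lcal^{\Hcal^\bullet} \subseteq \Lcal^\Hcal$ (using $\Hcal^\bullet \subseteq \Hcal$), sends morphisms to morphisms (the bullet functor is defined on all of $\Lcal$, hence on the full subcategory), is idempotent by idempotency of $(\whatever)^\bullet$ on $\Lcal$, and its essential image is $\Lcal^{\Hcal^\bullet}$ because $\Lcal^{\Hcal^\bullet}$ consists of objects fixed by the bullet. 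Hence $\realize{\Lcal^{\Hcal^\bullet}} \simeq \realize{\Lcal^\Hcal}$ and $\realize{\Lcal^\bullet}\simeq\realize{\Lcal}$, with the equivalences compatible with the horizontal inclusions (the square commutes, and one checks the curved arrows are natural enough, or simply that the two composites $\Lcal^{\Hcal^\bullet}\to\Lcal^\bullet\to\Lcal$ and $\Lcal^{\Hcal^\bullet}\to\Lcal^\Hcal\to\Lcal$ agree).

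Then the lemma follows by a two-out-of-three / commuting-square argument on the induced maps of nerves: in the commuting square above, the two vertical maps $\realize{\Lcal^{\Hcal^\bullet}}\to\realize{\Lcal^\Hcal}$ and $\realize{\Lcal^\bullet}\to\realize{\Lcal}$ are homotopy equivalences, so the top horizontal map $\realize{\Lcal^{\Hcal^\bullet}}\to\realize{\Lcal^\bullet}$ is a homotopy equivalence if and only if the bottom one $\realize{\Lcal^\Hcal}\to\realize{\Lcal}$ is. That is exactly the statement of Lemma~\ref{lemma: reduce}. I expect the main obstacle to be purely a matter of care rather than depth: verifying cleanly that $(\whatever)^\bullet$ genuinely restricts to an idempotent endofunctor of the full subcategory $\Lcal^\Hcal$ with the correct essential image — this is where the hypothesis $\Hcal^\bullet \subseteq \Hcal$ is used and must be cited explicitly — and making sure the homotopy equivalence produced by an idempotent endofunctor is compatible with the inclusion into the ambient category, so that the square of spaces commutes up to homotopy and two-out-of-three applies. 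Everything else is formal.
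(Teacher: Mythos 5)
Your proposal is correct and follows essentially the same route as the paper: the same commuting square with the inclusions and the bullet functor as a retraction, with the down-and-up composite equal to the identity and the distinguished inclusions $\iota_P^{P^\bullet}$ supplying the natural transformation from the identity to the up-and-down composite, followed by two-out-of-three on nerves. The only thing worth making explicit (which the paper does) is that the ``standard fact'' about the idempotent endofunctor rests on that natural transformation, not on idempotency alone.
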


\begin{proof}
Because $\Hcal^\bullet\subseteq\Hcal$, there is a commuting diagram 
\[
\xymatrix{
\Lcal^{\Hcal^\bullet} \ar[r]\ar[d] &\Lcal^\bullet\ar[d] \\
\Lcal^\Hcal 
   \ar[r] 
   \ar@/_-.7pc/[u]
   & \Lcal
   \ar@/_.7pc/[u],
}
\]
where the downward vertical arrows are given by inclusion of subcategories and the upward arrows are given by the functor 
$P\mapsto P^\bullet$. The vertical arrows induce homotopy equivalences of categories because the down-and-up composite is the identity on the top row, and the distinguished inclusions $\iota_P^{P^{\bullet}}$ provide a natural transformation from the identity functor on the bottom row to the up-and-down composite 
\cite[Prop.~4.5~(a)]{BLO-Discrete}.
\end{proof}

The proof of Theorem~\ref{theorem: reduce linking to centric radical}
follows the general argument of~\cite[Thm.~3.5]{BCGLO}, while dealing with the changes needed for the infinite situation. By Lemma~\ref{lemma: reduce}, it is sufficient to assume that $\Hcal=\Hcal^\bullet$, i.e. that $\Lcal^\Hcal$ is a (full) subcategory of~$\Lcal^\bullet$. 
Hence we can start with $\Lcal^\bullet$, and get to $\Lcal^\Hcal$ by inductively pruning conjugacy classes of subgroups that are not in~$\Hcal$, starting with the smallest subgroups. Since $\Lcal^\bullet$ has a finite number of conjugacy classes, this process terminates in~$\Lcal^{\Hcal}$.

\begin{proof}[Proof of Theorem~\ref{theorem: reduce linking to centric radical}]
By Lemma~\ref{lemma: reduce}, it is sufficient to assume that $\Lcal^\Hcal\subseteq\Lcal^\bullet$. Since $\Lcal^\bullet$ has a finite number of conjugacy classes of objects, we can make a finite list of conjugacy classes $\langle P_1\rangle,\ldots,\langle P_n\rangle$ of $\Lcal^\bullet$ such that 
\begin{itemize}
    \item $P_1,\ldots,P_n$ are fully normalized representatives of distinct conjugacy classes of objects of~$\Lcal^{\bullet}$, and represent all conjugacy classes that are in $\Lcal^\bullet$ but not in~$\Hcal$. 
    \item The list of sizes is non-increasing: $\size{P_i}\geq \size{P_{i+1}}$ for $i=1,..,n-1$. 
\end{itemize}
Let $\Lcal_0=\Lcal^\Hcal$, and for each $i=0,\ldots,n-1$, let 
$\Lcal_{i+1}$ be the full subcategory of $\Lcal^\bullet$ whose objects are $\Obj(\Lcal_{i})\cup\langle P_{i+1}\rangle $. Thus 
\begin{equation}     \label{eq: sequence of categories}
\Lcal^\Hcal=\Lcal_0\subsetneq\Lcal_1\subsetneq\ldots\subsetneq\Lcal_n=\Lcal^\bullet,
\end{equation}
and each subcategory in the sequence contains one more isomorphism class of $\Obj(\Lcal)$ than the one before it. 

We seek to prove that for all~$i$, the inclusion 
$\Lcal_i\subsetneq\Lcal_{i+1}$ induces a homotopy equivalence of nerves, and
we wish to apply Quillen's Theorem~A. We must show that for all $Q\in\Obj(\Lcal_{i+1})$, the nerve of $Q\downto\Lcal_{i}$ is contractible. If $Q$ is actually an object of $\Lcal_{i}$, then the desired statement is true because the identity map of $Q$ is an initial object. Thus, we need only consider $Q\in\langle P_{i+1}\rangle$. For all such~$Q$, the \undercategories\ 
$Q\downto\Lcal_i$ are isomorphic, 
so we need only prove that the nerve of $P_{i+1}\downto \Lcal_i$ is contractible (where we have assumed that $P_{i+1}$ is fully normalized). 
To simplify notation, let $P\definedas P_{i+1}$, which remains fixed for the remainder of the proof.

First we assert that we have an isomorphism of categories
\[
(P\downto\Lcal_{i}) \cong (P\downtoNoniso\Lcal^\bullet).
\]
To see this, note that if $P\rightarrow R$ is a morphism of~$\Lcal^\bullet$ that is not an isomorphism, then $R\in\Obj(\Lcal_i)$, because $\size{R}>\size{P}$. 
Hence it is sufficient to prove that for all fully normalized subgroups~$P$, the nerve of $P\downtoNoniso\Lcal^\bullet$ is contractible. 

Consider the sequence of categories 
\[
\xymatrix{
P\downtoNoniso\Lcal^{\bullet}
      \ar@/_1pc/[r]_{\subseteq}
   & P\downtoNoniso\Lcal
      \ar@/_1pc/[r]_{r}
      \ar@/_1.2pc/[l]_{(\whatever)^\bullet}
   & \NormalizerPrime{\Lcal}
      \ar@/_1.2pc/[l]_{\supseteq}
      \ar@/_1pc/[r]_{\varphi\mapsto\hat{\varphi}}
   & \extension\downto N_\Lcal P.
      \ar@/_1.2pc/[l]_{(\whatever)\circ\iota_P^{\extension}}
}
\]
We assert that each adjacent pair has homotopy equivalent nerves. For the first pair, the map from right to left takes a non-isomorphism $P\rightarrow Q$ to the composite $P\rightarrow Q\rightarrow Q^\bullet$, 
which is likewise a non-isomorphism, since $\size{Q^\bullet}\geq \size{Q}>\size{P}$. Hence the composite $(\whatever)^\bullet\circ(\subseteq)$ is the identity on $P\downtoNoniso\Lcal^{\bullet}$, and the natural map $Q\rightarrow Q^\bullet$ is a natural transformation from the identity on $P\downtoNoniso\Lcal$ to the composite $(\subseteq)\circ (\whatever)^\bullet$. 

The middle pair has homotopy equivalent nerves by
Proposition~\ref{proposition: deformation retraction new}.
Lastly, because $P$ is fully normalized, $\Fcal$-centric, and not $\Fcal$-radical (because $P$ is not in~$\Hcal$), the third pair also has homotopy equivalent nerves
by Proposition~\ref{proposition: inflate to Phat}. 
However, the identity map $\extension\rightarrow\extension$ is an initial object of $\extension\downto N_\Lcal P$, which therefore has contractible nerve, so we conclude that
\[
\realize{P\downto\Lcal_i}
\cong \realize{P\downtoNoniso\Lcal^\bullet}
\simeq \realize{\extension\downto N_\Lcal P}
\simeq *.
\]

We have proved that in the sequence~\eqref{eq: sequence of categories}, each successive pair of categories has homotopy equivalent nerves by Quillen's Theorem~A. Since the inclusion $\Lcal^\bullet\subseteq\Lcal$ induces a homotopy equivalence of nerves \cite[Prop.~4.5]{BLO-Discrete}, the proof is finished. 
\end{proof}

\bibliographystyle{amsalpha}
\newcommand{\etalchar}[1]{$^{#1}$}
\providecommand{\bysame}{\leavevmode\hbox to3em{\hrulefill}\thinspace}
\providecommand{\MR}{\relax\ifhmode\unskip\space\fi MR }
\providecommand{\MRhref}[2]{%
  \href{http://www.ams.org/mathscinet-getitem?mr=#1}{#2}
}
\providecommand{\href}[2]{#2}

\end{document}